\documentclass[oneside]{amsart}

\usepackage{amsthm}
\usepackage{amsmath}
\usepackage{amssymb}
\usepackage{enumerate}
\usepackage{graphicx}
\usepackage[hidelinks,pagebackref,pdftex]{hyperref}
\usepackage{booktabs}
\usepackage{color}
\usepackage[dvipsnames]{xcolor}
\usepackage{import}
\usepackage{tikz-cd}

\AtBeginDocument{%
   \def\MR#1{}
}

\usepackage{marginnote}
\long\def\@savemarbox#1#2{\global\setbox#1\vtop{\hsize\marginparwidth 
  \@parboxrestore\tiny\raggedright #2}}
\renewcommand*{\backref}[1]{}
\renewcommand*{\backrefalt}[4]{
  \ifcase #1
  [No citations.]
  \or [#2]
  \else [#2]
  \fi }

\numberwithin{equation}{section}
\theoremstyle{plain}
\newtheorem{theorem}[equation]{Theorem}

\newtheorem{lemma}[equation]{Lemma}

\newtheorem{proposition}[equation]{Proposition}

\newtheorem{claim}[equation]{Claim}

\newtheorem*{namedtheorem}{\theoremname}
\newcommand{\theoremname}{testing}

\theoremstyle{definition}
\newtheorem{definition}[equation]{Definition}
\newtheorem{remark}[equation]{Remark}

\newcommand{\cut}{\backslash\backslash}

\newcommand{\fakeenv}{} 

{
 \renewcommand{\fakeenv}{#2} 
 \theoremstyle{plain}
 \newtheorem*{\fakeenv}{#1~\ref{#2}} 
 \begin{\fakeenv}
}
{
 \end{\fakeenv}
}

\makeatletter

\def\chaptermark#1{}

\def\chapter{%
  \if@openright\cleardoublepage\else\clearpage\fi
  \thispagestyle{plain}\global\@topnum\z@
  \@afterindenttrue \secdef\@chapter\@schapter}

\def\@chapter[#1]#2{\refstepcounter{chapter}%
  \ifnum\c@secnumdepth<\z@ \let\@secnumber\@empty
  \else \let\@secnumber\thechapter \fi
  \typeout{\chaptername\space\@secnumber}%
  \def\@toclevel{0}%
  \ifx\chaptername\appendixname \@tocwriteb\tocappendix{chapter}{#2}%
  \else \@tocwriteb\tocchapter{chapter}{#2}\fi
  \chaptermark{#1}%
  \addtocontents{lof}{\protect\addvspace{10\p@}}%
  \addtocontents{lot}{\protect\addvspace{10\p@}}%
  \@makechapterhead{#2}\@afterheading}
\def\@schapter#1{\typeout{#1}%
  \let\@secnumber\@empty
  \def\@toclevel{0}%
  \ifx\chaptername\appendixname \@tocwriteb\tocappendix{chapter}{#1}%
  \else \@tocwriteb\tocchapter{chapter}{#1}\fi
  \chaptermark{#1}%
  \addtocontents{lof}{\protect\addvspace{10\p@}}%
  \addtocontents{lot}{\protect\addvspace{10\p@}}%
  \@makeschapterhead{#1}\@afterheading}
\newcommand\chaptername{Chapter}

\def\@makechapterhead#1{\global\topskip 7.5pc\relax
  \begingroup
  \fontsize{\@xivpt}{18}\bfseries\centering
    \ifnum\c@secnumdepth>\m@ne
      \leavevmode \hskip-\leftskip
      \rlap{\vbox to\z@{\vss
          \centerline{\normalsize\mdseries
              \uppercase\@xp{\chaptername}\enspace\thechapter}
          \vskip 3pc}}\hskip\leftskip\fi
     #1\par \endgroup
  \skip@34\p@ \advance\skip@-\normalbaselineskip
  \vskip\skip@ }
\def\@makeschapterhead#1{\global\topskip 7.5pc\relax
  \begingroup
  \fontsize{\@xivpt}{18}\bfseries\centering
  #1\par \endgroup
  \skip@34\p@ \advance\skip@-\normalbaselineskip
  \vskip\skip@ }
\def\appendix{\par
  \c@chapter\z@ \c@section\z@
  \let\chaptername\appendixname
  \def\thechapter{\@Alph\c@chapter}}

\newcounter{chapter}

\newif\if@openright

\makeatother

\title[The stable AC conjecture and thickenable group presentations]{The stable Andrews-Curtis conjecture and \\ thickenable presentations of the trivial group}
\author{Marc Lackenby}
\address{Mathematical Institute, University of Oxford, \newline Woodstock Road, Oxford OX2 6GG, United Kingdom}
\thanks{The author was partially supported by EPSRC grant EP/Y004256/1. For the purpose of open access, the author has applied a CC BY public copyright licence to any author accepted manuscript arising from this submission.}

\begin{document}

\begin{abstract} 
We establish 
an explicit upper bound on the number of stable Andrews-Curtis moves that convert thickenable balanced
presentations of the trivial group to the standard one-generator presentation. We also present a proof that
thickenable balanced presentations of the trivial group satisfy the (unstable) Andrews-Curtis conjecture.
\end{abstract}

\maketitle
\normalsize

\section{Introduction}
The Andrews-Curtis conjecture \cite{AndrewsCurtis} and its relatives are among the most notorious unsolved problems in group theory. The Andrews-Curtis conjecture proposes that any balanced presentation $\langle x_1, \dots, x_n | r_1, \dots, r_n \rangle$ of the trivial group can be transformed into a \emph{standard presentation} $\langle x_1, \dots, x_n | x_1, \dots, x_n \rangle$ via a finite sequence of the following \emph{Andrews-Curtis moves}:
\begin{enumerate}
\item[(0)] remove or introduce $x_i x_i^{-1}$ or $x_i^{-1} x_i$ in some relation $r_j$;
\item[(1)] replace some $r_i$ by $r_i^{-1}$;
\item[(2)] replace some $r_i$ by $r_i r_j$ or $r_j r_i$ for some $j \not= i$;
\item[(3)] replace some $r_i$ by $x_j r_i x_j^{-1}$ or $x_j^{-1} r_i x_j$ for some $j$.
\end{enumerate}
(In this paper, we view the relations in a presentation as elements of the free semi-group generated by the generators and their inverses. The relations are unordered, and so moves that re-order relations are not required.)

In the stable Andrews-Curtis conjecture \cite{HogAngMetzler, Myasnikov, HAMetzler2}, one considers the above moves plus one further modification to a group presentation:
\begin{enumerate}
\item[(4)] add a generator $x_{n+1}$ and a relation $x_{n+1}$, or the reverse of this operation, provided the remaining relations do not include the letters $x_{n+1}$ or $x_{n+1}^{-1}$.
\end{enumerate}
The stable Andrews-Curtis conjecture proposes that any balanced presentation of the trivial group can be reduced to the standard one-generator presentation $\langle x_1 | x_1 \rangle$ using moves (0)-(4), which are called the \emph{stable Andrews-Curtis moves}.

There are many well known potential counterexamples to the stable Andrews-Curtis conjecture, including various infinite families \cite{HogAngMetzler, Myasnikov}. 
We mention two:
\begin{enumerate}
\item the Akbulut-Kirby \cite{AkbulutKirby} examples $\langle x, y | xyx = yxy, x^{k+1} = y^k \rangle$, for $k \geq 3$;
\item the Miller-Schupp \cite{MillerSchupp} examples $\langle x,y | x^{-1} y^2 x = y^3, x=w \rangle$ for any word $w$ with exponent sum $0$ in $x$.
\end{enumerate}

Associated with any finite presentation $P$ of a group, there is a cell complex $K$, called the \emph{presentation 2-complex for $K$}, with a single 0-cell, a 1-cell for each generator, and a 2-cell attached along the path specified by each relation. This has the property that the fundamental group of $K$ is the group specified by $P$. In this paper, we will focus on presentations $P$ that are \emph{thickenable}, which means that $K$ embeds in some 3-manifold. We remark that it is easily checked whether or not a group presentation is thickenable, via an algorithm of Neuwirth \cite{Neuwirth}. 

When examining the Andrews-Curtis conjecture and the stable Andrews-Curtis conjecture, it is natural to define, for a balanced presentation $P$ of the trivial group, the quantities
$AC(P)$ and $SAC(P)$, which are the minimal number of Andrews-Curtis moves (respectively, stable Andrews-Curtis moves) required to transform it to a standard presentation (respectively, standard one-generator presentation). Of course, $AC(P)$ is finite for every such $P$ if and only if the Andrews-Curtis conjecture holds, and a similar statement holds for $SAC(P)$. Furthermore, these quantities are finite and computable for every $P$ if and only if the (stable) Andrews-Curtis conjecture holds and there is an algorithm to detect the trivial group among all balanced group presentations.
Note that it remains a significant unsolved problem (due to Magnus \cite[Problem 1.12]{Kourovka}) whether there is such an algorithm. 

We have very little to say about $AC(P)$, other than that it is finite and computable for thickenable balanced presentations of the trivial group. Instead, our main focus in this paper is on $SAC(P)$.

We say that the \emph{length} of a finite presentation is the sum of the lengths of its relations. This is a natural measure of the complexity of a presentation, particularly because in a balanced presentation of the trivial group, no relation can be the empty word, and hence there are only finitely many balanced presentations of the trivial group with a given length.

Bridson \cite{Bridson} proved the following striking result. Lishak \cite{Lishak} proved a very similar result but with slightly different constants.

\begin{theorem}
\label{Thm:Bridson}
For any given $n \geq 4$ and for infinitely many positive integers $\ell$, there is a balanced presentation $P$ of the trivial group with $n$ generators and $n$ relations, with the following properties:
\begin{enumerate}
\item $P$ can be converted to a standard presentation using Andrews-Curtis moves;
\item the length of $P$ is at most $24(\ell + 1)$;
\item $SAC(P)$ is at least 
\end{enumerate}
$$2 \! \! \! \! \! \! \! \! \! 
\underbrace{
  {{{^{2\vphantom{h}}}^{2\vphantom{h}}}^{\cdots\vphantom{h}}}^{2\vphantom{h}}
}_{\mathrm{height } \log_2(\ell)}
$$
\end{theorem}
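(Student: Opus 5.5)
Theorem~\ref{Thm:Bridson} is Bridson's, so my plan follows his strategy: build presentations that are trivially AC-trivial, yet whose trivialisation forces an enormous van Kampen diagram. The key invariant is that a sequence of $N$ stable Andrews-Curtis moves converting $P$ to the standard presentation gives, for each original relator-to-be-killed (or for each generator $x_i$), an expression of $x_i$ as a product of conjugates of relators of $P$. The total area of these expressions is bounded by an exponential in $N$; more carefully, one tracks how each move changes the words expressing the new relators as products of conjugates of the old ones. Each move at most doubles the area and adds bounded conjugator length, so the area is at most roughly $2^{O(N)}$. Moves of type (4) add a trivial generator and relator, which can be eliminated by setting the extra generator to $1$ via a retraction of free groups; this does not increase area. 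Thus $SAC(P)\ge c\log(\mathrm{Area})$ for some suitable area functional, namely the area needed to kill the generators. It therefore suffices to produce balanced presentations of length $O(\ell)$ in which some generator $x_i$ has area at least a tower of $2$'s of height about $\log_2\ell$.

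For the construction, I would use an iterated Baumslag--Gersten type group. Take four generators $a,b,c,d$ with relators such as $a^{b}=a^2$, $b^{c}=b^2$, $c^{d}=c^2$, and a fourth relator killing $d$ with a word involving a long power, or a commutator of $d$ with a word $w_\ell$ of length $O(\ell)$. The idea is that in the one-relator-HNN chain $\langle a,b\mid a^b=a^2\rangle$, the word $b^{-k}ab^ka^{-1}$ has area exponential in $k$, and iterating with Gersten's group makes $c^{-k}bc^k$ have length $\sim 2^k$. Nesting, with $\log_2\ell$ levels of exponentiation arranged through iterated short words $w_{j+1}=w_j^{-1}xw_j$, gives a tower. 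Extra generators for $n>4$ are added trivially with relator $x_i$, which keeps length at most $24(\ell+1)$.

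Next, AC-triviality in statement (1) must be checked by hand. The relators are chosen so that a short sequence of AC moves, with the final relator directly killing $d$ (e.g. $d=$ a word in $c$), collapses the chain: once $d=1$, the relator $c^d=c^2$ gives $c$, then $b$, then $a$. Each step is realised by moves (2) and (3) applied to honest relators, producing $\langle x_i\mid x_i\rangle$ without stabilisation.

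The main obstacle is the lower bound: proving that the relevant generator genuinely has tower area in \emph{this} presentation, rather than in the HNN chain without the killing relator. The killing relator might provide shortcuts. I would try to handle this by mapping to a quotient or cover in which the chain survives, for instance an infinite group obtained by deleting one relator, together with a careful area-versus-distortion argument using the tower distortion of $\langle a\rangle$ in iterated Baumslag--Solitar-style groups. I would also check that the constants give length $\le 24(\ell+1)$.
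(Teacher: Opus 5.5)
The paper gives no proof of this statement. It is quoted from Bridson \cite{Bridson}, with Lishak's variant \cite{Lishak}, only as a contrast to Theorem \ref{Thm:StableACBound}, so your proposal has to stand on its own. Your reduction step is correct and is precisely the paper's Lemma \ref{Lem:GensProdsConj}: $N$ moves of type $(0)$--$(4^+)$ write each generator as a product of at most $2^{N+1}-1$ conjugates of relators, with conjugators of length at most $2^N$, so $SAC(P)\geq \log_2(\mathrm{Area}_P(x_i)+1)-1$. You therefore need area a tower of height about $\log_2\ell+1$, not $\log_2\ell$.

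Beyond that step, no presentation is actually produced, and the sketch you give defeats itself. You verify (1) by a short collapse: a relator kills $d$, then $c^d=c^2$ kills $c$, then $b$, then $a$. But an AC-trivialisation with $O(\ell)$ moves bounds $SAC(P)$ by $O(\ell)$, contradicting (3). Concretely, take $r_1=b^{-1}aba^{-2}$, $r_2=c^{-1}bcb^{-2}$, $r_3=d^{-1}cdc^{-2}$, and either $r_4=d$ or $r_4=d^{-1}c^m$. One checks in the free group that $c^{-1}$ is a product of three conjugates of $r_3^{\pm1},r_4^{\pm1}$. Then $b^{-1}=(bc^{-1}b^{-1})\,c\,r_2$ and $a^{-1}=(ab^{-1}a^{-1})\,b\,r_1$, so $c,b,a$ have area at most $3,7,15$. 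The presentations you need are AC-trivial only via tower-many moves, so (1) demands a genuine, necessarily very long (e.g.\ inductive) trivialisation, not a collapse. Also, the chain $a^b=a^2$, $b^c=b^2$, $c^d=c^2$ only gives distortion of a fixed tower height. Height $\log_2\ell$ with boundedly many generators requires the self-similarity of the Baumslag--Gersten group, where $a^{\Delta(j)}$ is a conjugate of $a$ by a word of length $O(2^j)$. You allude to this but never build it into a specific presentation.

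The lower bound, which you rightly call the main obstacle, is left open, and the suggested remedy does not work as stated. Since $\pi_1(P)=1$, the presentation complex has no nontrivial connected covers and the group has no nontrivial quotients. The group $G_0$ obtained by deleting the killing relator $r$ surjects onto $\pi_1(P)$ rather than being a quotient of it. Moreover, $x_i$ is typically nontrivial in $G_0$, so it has no area there. In any case, areas with respect to fewer relators bound areas with respect to all relators only from above, which is exactly the shortcut problem you noticed.

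What is needed is a genuine transfer mechanism. One option is a homomorphism $\phi$ from the free group on the generators of $P$ to that of a presentation $Q$ with huge Dehn function, such that every $\phi(r_j)$ has $Q$-area at most a constant $C$ while $\phi(x_i)$ has tower $Q$-area. Then $\mathrm{Area}_P(x_i)\geq \mathrm{Area}_Q(\phi(x_i))/C$. Another option is a lower bound on how many conjugates of the image of $r$ are needed to write the image of $x_i$ in $G_0$. Distortion alone will not give the latter: in the Baumslag--Gersten group, $a^{\Delta(j)}$ is a single conjugate of $a$, so huge powers can be cheap. Designing presentations for which such a transfer provably works, while they still present the trivial group and are (slowly) AC-trivial, is the substance of Bridson's and Lishak's proofs, and it is missing from your proposal.
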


By contrast to the above result of Bridson and Lishak, our main theorem shows that thickenable balanced presentations are significantly more tractable. It has been known for some time that thickenable balanced presentations of the trivial group satisfy the stable Andrews-Curtis conjecture. This is a consequence of the Poincar\'e conjecture, now proved by Perelman \cite{Perelman1, Perelman2, Perelman3}. See for example \cite[p.45]{HogAngMetzler}, \cite[p.2296]{Ivanov} and \cite[p.27]{HAMetzler2}. Our main theorem is the following explicit upper bound on the number of stable Andrews-Curtis moves.

\begin{theorem}
\label{Thm:StableACBound}
For any thickenable balanced presentation $P$ of the trivial group with length $\ell$, $SAC(P)$ is at most
$$2^{2^{c \ell^2}}$$
where $c = 2 \cdot 10^6$.
\end{theorem}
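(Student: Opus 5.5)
We thicken $K$ to a 3-manifold and use effective 3-manifold topology to certify that the thickening is a ball. Concretely, we convert that certificate into a sequence of stable moves whose length we track throughout.

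\textbf{Step 1: the thickening.} Let $K$ be the presentation 2-complex of $P$. Since $P$ is thickenable, a regular neighbourhood $M$ of $K$ in some 3-manifold is a compact 3-manifold with a handle structure. It has one 0-handle, one 1-handle per generator and one 2-handle per relation. Since $\pi_1(M)$ is trivial and $\chi(M)=1$, the boundary $\partial M$ is a 2-sphere. Hence, by the Poincar\'e conjecture, $M$ is a 3-ball.

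We first make $M$ orientable and compute its size. If the thickening were non-orientable, $\pi_1$ would surject onto $\mathbb{Z}/2$; so $M$ is orientable. Next we triangulate $M$ with $O(\ell)$ tetrahedra. A 2-handle attached along a word of length $k$ needs only $O(k)$ tetrahedra, given that Neuwirth's algorithm produces the local embedding data around the vertex.

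\textbf{Step 2: a certificate that $M$ is a ball.} With $n=O(\ell)$ tetrahedra, we need an explicit bound on the number of Pachner or bistellar moves (and boundary moves) that relate the triangulation of $M$ to a standard triangulation of the ball. The natural tool is the author's own earlier work with Mijatovi\'c-style techniques. Mijatovi\'c bounds the number of Pachner moves relating triangulations of $S^3$ by $e^{O(n^2)}$. Adapting this to $M$, say by coning $\partial M$ to get a triangulated $S^3$ with $O(n)$ tetrahedra, gives a sequence of length at most $2^{O(\ell^2)}$.

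Alternatively, one can use Hass--Lagarias--Pippenger normal surface theory together with the effective 3-sphere recognition of Rubinstein and Thompson. This produces a sequence of at most $2^{O(\ell^2)}$ moves that shrink $M$ to a point.

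\textbf{Step 3: translating to stable Andrews--Curtis moves.} Each elementary move on the triangulated ball must be realised on the level of handle decompositions of 2-complexes, that is, as 3-deformations. Here is where the second exponential enters. Converting the spine of a triangulation into a presentation, and tracking handle slides, causes words to grow. A slide of a 2-handle over another can multiply relation lengths, so after $2^{O(\ell^2)}$ slides the relators have length up to $2^{2^{O(\ell^2)}}$. Moves of type (0), which cancel letters one pair at a time, then cost that many steps.

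One also needs to pass from $P$ to the presentation read off from the triangulation's dual spine. This is done by collapsing a maximal tree and adding trivial generator/relation pairs via move (4). It costs polynomially many moves, which is negligible. Finally we track constants to reach $c=2\cdot 10^6$.

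\textbf{Main obstacle.} The main difficulty is Step 3 combined with Step 2: we need a 3-deformation-compatible certificate. A sequence of Pachner moves between triangulations does not immediately give Andrews--Curtis moves. 2-3 and 1-4 moves change the dual spine by a 3-deformation, but that deformation passes through 3-cells. For 3-deformations of 2-complexes, one must show that these expansions and collapses can be reordered into 2-dimensional moves at bounded cost.

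My first attempt would be to use special spines and Matveev--Piergallini moves. Adjacent special spines differ by a single T-move or lune move, and each such move is directly realised by a bounded number of moves (2), (3) and (4), plus free-reduction moves proportional to current word length. The doubly exponential bound then follows from the singly exponential number of spine moves, times the exponential word-length growth.
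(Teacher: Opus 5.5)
Your Steps 1 and 2 broadly match the paper: you cap off $N(K)$ to get a triangulated $S^3$ with $O(\ell)$ tetrahedra and apply Mijatovi\'c's bound. Step~3 carries the whole content of the theorem, and there it is asserted rather than proved; the assertions made do not hold as stated.

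\textbf{The 1-4 move.} A 1-4 Pachner move is not a 3-deformation of the dual 2-skeleton. The dual 2-skeleton of a triangulation with $V$ vertices is a spine of $S^3$ minus $V$ balls and has Euler characteristic $V$. Mijatovi\'c's sequence passes through multi-vertex triangulations, so it does not yield a sequence of Matveev--Piergallini moves on spines of the once-punctured $S^3$, and you give no quantitative substitute. This Euler-characteristic mismatch is exactly what the paper handles with two auxiliary trees: a maximal tree $X$ in the 1-skeleton, which is collapsed, and a maximal tree $X'$ in the dual 1-skeleton, whose dual faces are deleted. Then $K(T,X,X')$ always has one 0-cell, and its complement is a single open 3-cell.

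\textbf{Tree changes.} Reading a presentation off a spine requires a maximal tree in its 1-skeleton. The moves force this tree, and the choice of deleted 2-cells, to change. Changing the collapsed tree alters the presentation by Nielsen moves (5), (6) (Lemma~\ref{Lem:EdgeSwap}). In the paper's set-up, new generators also appear equal to nontrivial words of length at most $2$, i.e.\ via $(4^+)$ moves. Neither is a local composition of moves (0)--(4). Simulating ``add $x_{n+1}$ with relation $x_{n+1}w^{-1}$'' means multiplying the relation $x_{n+1}$ by conjugates of relators whose product is $w$ in the free group. How many are needed depends on the whole presentation, not on the move. So the claim that each spine move is ``directly realised by a bounded number of moves (2), (3) and (4)'' is unjustified.

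As a sanity check: if that claim held, you could re-read the presentation from the current spine after each move. Relator lengths would then stay $2^{O(\ell^2)}$, and you would get a single-exponential bound on $SAC(P)$, stronger than the theorem. Your second exponential comes only from not re-reading, which signals that the real difficulty has been assumed away.

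\textbf{What the paper does instead.} The argument has two stages.
\begin{enumerate}
\item Proposition~\ref{Prop:PachnerMoveChangePres} shows each Pachner move changes $P(T,X,X')$ by polynomially many moves of type (0)--(6). Edge swaps in $X$ cost Nielsen moves. Edge swaps in $X'$ cost only AC moves, by sliding a 2-cell across a ball (Lemma~\ref{Lem:EdgeSwapDualTree}). Presentation length stays at most $6t+4$ throughout, so there is no doubly exponential word growth. This gives Theorem~\ref{Thm:ThickenableBound}.
\item Proposition~\ref{prop:Remove4Plus56} then removes the non-AC moves. Nielsen moves become $O(\ell)$ moves of type (0)--$(4^+)$ (Lemma~\ref{Lem:Nielsen}). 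The $(4^+)$ moves are eliminated using that the group is trivial and the sequence ends at $\langle x_1 | x_1\rangle$. By induction backwards along the sequence, Lemma~\ref{Lem:GensProdsConj} writes each generator as a product of at most $2^{m+1}-1$ conjugates of relators, with conjugators of length at most $2^m$. Hence each $(4^+)$ move costs $O(4^m)$ stable AC moves.
\end{enumerate}
The second stage, not relator growth under handle slides, is the source of the second exponential, giving $4^{12Lm+3}$ with $m, L \le 2^{k\ell^2}$. It is the missing idea in your proposal.

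Your initial passage from $P$ to a spine presentation ``via move (4)'' has the same problem. Relating $K$ to a different spine of $N(K)$ is a 3-deformation, not just trivial stabilisations. Even the paper's simpler primal passage already needs $(4^+)$ moves (Lemmas~\ref{Lem:MakeTriangularThickenable} and~\ref{Lem:FromPToPTXX}).
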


Although the above upper bound on the number of stable Andrews-Curtis moves is large, it is significantly smaller than the quantity in Bridson's and Lishak's theorems, because of the fixed height of the tower.

We remark that Theorems \ref{Thm:Bridson} and \ref{Thm:StableACBound} appear to rule out one possible approach to the stable Andrews-Curtis conjecture. Since the stable Andrews-Curtis conjecture holds for thickenable balanced presentations of the trivial group and since the standard one-generator presentation is thickenable, the conjecture is therefore equivalent to the assertion that any balanced presentation of the trivial group can be converted into a thickenable presentation using stable Andrews-Curtis moves. One might hope that there is a simple way to do this. However, the large difference between the bounds in Theorems \ref{Thm:Bridson} and \ref{Thm:StableACBound} highlight that any process that converts a balanced presentation of the trivial group into a thickenable one would require a vast number of stable Andrews-Curtis moves. Hence the prospect of a simple procedure to achieve this seems remote.

Our proof of Theorem \ref{Thm:StableACBound} uses Perelman's solution of the Poincar\'e conjecture \cite{Perelman1, Perelman2, Perelman3} as well as results about triangulations of the 3-sphere. By definition, a thickenable balanced presentation $P$ of the trivial group specifies a 2-complex $K$ that embeds in some 3-manifold. It therefore has a regular neighbourhood $N(K)$ in that 3-manifold. This is homotopy equivalent to the contractible 2-complex $K$, and so by the Poincar\'e conjecture, $N(K)$ is a 3-ball. We first subdivide the presentation complex $K$ so that each 2-cell is a triangle, and then we cap off $\partial N(K)$ with a triangulated 3-ball to form a triangulation of the 3-sphere. It is known that any two triangulations of a compact 3-manifold differ by a sequence of Pachner moves \cite{Pachner}, and in the case of the 3-sphere, there are good bounds on the number of moves that are required, due to Mijatovic \cite{Mijatovic} and King \cite{King}. The challenge that we face is to go from a bound on Pachner moves to a bound on stable Andrews-Curtis moves. This is not straightforward, since Pachner moves will modify the Euler characteristic of the 2-skeleton of the triangulation, whereas stable Andrews-Curtis moves leave the Euler characteristic of the presentation 2-complex unchanged.

It is natural to ask whether the bound in Theorem \ref{Thm:StableACBound} could be improved. The upper bound on $SAC(P)$ is a double exponential function (of $\ell^2$). As we will see, this could be reduced to a single exponential if we allowed a broader set of moves than just the stable Andrews-Curtis moves. Such moves are discussed in Section \ref{Sec:Supplementary}. They include a generalisation of the stable Andrews-Curtis move $(4)$, as well as the possibility of changing the generators of the presentation by a Nielsen automorphism. It is shown that in Section \ref{Sec:Supplementary} that, when applied to balanced presentations of the trivial group, this broader set of moves can be expressed just in terms of the stable Andrews-Curtis moves, but to do so increases the number of moves exponentially. The other reason for the height of the tower is the exponential upper bound on Pachner moves for triangulations of the 3-sphere proved by Mijatovic \cite{Mijatovic} and King \cite{King}. It remains a possibility that one can convert any triangulation of the 3-sphere with $t$ tetrahedra to a standard triangulation (with just two tetrahedra) via Pachner moves, where the number of such moves is at most a polynomial function of $t$. If true, this would reduce the bound on $SAC(P)$ in Theorem \ref{Thm:StableACBound} to just a single exponential function of some polynomial in $\ell$.

Although we focus, in this paper, mostly on the stable Andrews-Curtis conjecture, we also observe the following.

\begin{theorem}
\label{Thm:AC}
Any thickenable balanced presentation of the trivial group can be converted to a standard presentation using Andrews-Curtis moves.
\end{theorem}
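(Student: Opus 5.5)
The plan is to pass through 3-manifold topology and then use classical results on handle decompositions of the 3-ball. Let $P$ be a thickenable balanced presentation of the trivial group, with presentation complex $K$. By assumption $K$ embeds in some 3-manifold $M$, so it has a regular neighbourhood $N(K)$. This neighbourhood is a compact 3-manifold that collapses onto $K$, so it is contractible. Its boundary is a closed orientable surface. A standard homology argument (half lives, half dies) shows that a contractible compact 3-manifold has boundary a sphere. Adding a 3-ball along this sphere gives a closed simply connected 3-manifold, which is $S^3$ by Perelman. Hence $N(K)$ is a 3-ball.

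The key step is to read $N(K)$ as a handle decomposition of $B^3$. We use one 0-handle, a 1-handle for each generator, and a 2-handle for each relation, with no 3-handles; the presentation is recovered from the attaching circles of the 2-handles. Balancedness and triviality of the group are consistent with this, since $\chi(K)=1$.

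We then invoke the classical fact that any handle decomposition of $B^3$ without 3-handles (or any 3-dimensional thickening of a contractible 2-complex) can be reduced to the trivial decomposition by handle slides, isotopies, and cancellation of complementary 1-/2-handle pairs. No creation of new cancelling pairs is needed in dimension three. One route follows Zeeman's observation that a 3-dimensional thickening of a contractible 2-complex collapses after appropriate moves. Another route is the argument via Heegaard splittings and Waldhausen's theorem. Compose the thickening with a 3-ball to get a Heegaard splitting of $S^3$ whose one side is the 1-handlebody $V$, with 2-handle attaching curves forming a system of meridians-to-be on $\partial V$. By Waldhausen, Heegaard splittings of $S^3$ are standard. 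So after handle slides of the 2-handles over one another, and slides of the 1-handles (which amount to Nielsen transformations of the generators), the 2-handle curves become dual to the 1-handles.

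Each such operation is then translated into Andrews-Curtis moves:
\begin{itemize}
\item Isotopy of an attaching circle changes the relator by free reduction and cyclic conjugation, which are moves (0) and (3).
\item A slide of one 2-handle over another replaces $r_i$ by $r_i$ times a conjugate of $r_j^{\pm1}$, which is a composite of moves (1)--(3).
\item Orientation reversal is move (1).
\end{itemize}
The main obstacle is handling the 1-handle slides, i.e.\ Nielsen automorphisms of the free group. These are not literally among moves (0)--(3) if we require the final presentation to be standard in the original generators. My first attempt is to avoid 1-handle slides entirely by using the dual description: view the 2-handle curves as fixed meridian discs of the complementary handlebody, and apply Waldhausen's uniqueness to the side carrying the 2-handles. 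Failing that, I would use the standard fact that if a presentation $\langle x \mid r\rangle$ is AC-equivalent to $\langle x\mid \phi(x)\rangle$ for an automorphism $\phi$ of the free group, then it is AC-equivalent to $\langle x \mid x\rangle$. This holds because the relators $\phi(x_1),\dots,\phi(x_n)$ normally generate $F$, and one can undo $\phi$ Nielsen-step by Nielsen-step using moves (2) and (3). For example, $\phi(x_i)=x_ix_j$ is transformed into $x_i$ by multiplying by the relator $x_j^{-1}$, which is itself obtained by inverting the $j$-th relator. Once the curves are dual to the 1-handles, each relation is conjugate to a single generator up to moves (0). Hence the presentation is standard, with no stabilisation (move (4)) used, proving Theorem~\ref{Thm:AC}.
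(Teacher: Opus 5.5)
Your proposal is correct and is essentially the paper's argument: Perelman and Waldhausen's theorem (together with the fact, Lemma~\ref{Lem:DiscSlides}, that any two meridian systems of a handlebody are related by disc slides) give a sequence of Andrews--Curtis and Nielsen moves. The Nielsen moves are then eliminated by showing that $\langle x \mid \phi(x_1),\dots,\phi(x_n)\rangle$ is AC-trivial, which the paper does via Lemma~\ref{Lem:ChangeOrder} and induction. The points to tighten are these: perform all 2-handle slides before the 1-handle slides (or commute the moves as in Lemma~\ref{Lem:ChangeOrder}) to reach the form $P\sim_{AC}\langle x\mid\phi(x)\rangle$. The real reason this form is AC-trivial is that precomposing $\phi$ with an elementary Nielsen automorphism acts on the relator tuple by a move of type (1) or (2), so you peel off the factors of $\phi$ from the innermost side; normal generation of the free group by the relators plays no role.
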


This result was stated and proved by Guo \cite{Guo}, using a theorem of Waldhausen \cite{Waldhausen} on Heegaard splittings of the 3-sphere. However, Guo's proof appears to establish a slightly weaker conclusion. Therefore, in the final section of this paper, we present some further remarks and results, which in addition to Guo's argument, complete the proof of this theorem. Unfortunately, the proof appears to give no information about the number of Andrews-Curtis moves.

\subsection{Outline of the paper.} In Section \ref{Sec:Supplementary}, various modifications to a group presentation are discussed. For balanced presentations of the trivial group, they are shown to be achievable using stable Andrews-Curtis moves. We quantify how many such moves are needed. In Section \ref{Sec:3Sphere}, we introduce triangulations of the 3-sphere and show how a balanced thickenable presentation of the trivial group may be used to produce such a triangulation. In Section \ref{Sec:BoundSAC}, the proof of Theorem \ref{Thm:StableACBound} is completed. Section \ref{Sec:AlternativeApproach} contains an alternative proof of the stable Andrews-Curtis conjecture using classical methods discussed in \cite{HogAngMetzler}. We speculate whether this alternative approach could be used to prove an explicit bound on the number of stable Andrews-Curtis moves.
Section \ref{Sec:ACConjecture} introduces Heegaard splittings and gives a proof of Theorem \ref{Thm:AC}. 

\begin{remark}
In this paper, we work exclusively in the piecewise-linear category. Thus, all cell complexes will be piecewise-linear; in particular, the attaching map of each cell is piecewise-linear. All manifolds will be combinatorial piecewise-linear manifolds. In fact, we will be working only with manifolds of dimension at most 3, and in these dimensions, the topological, piecewise-linear and smooth categories are all equivalent \cite{Moise}.
\end{remark}

\section{Supplementary moves}
\label{Sec:Supplementary}

We now develop some machinery required in the proof of Theorem \ref{Thm:StableACBound}.

Although the stable Andrews-Curtis moves are conjectured to be sufficient to reduce any balanced presentation of the trivial group to the standard one-generator presentation, they do not suffice to relate any two balanced presentations of a non-trivial group. This because the Andrews-Curtis moves do not change the group elements that the generators represent, and the extra stable Andrews-Curtis move only introduces or removes a trivial element. So for example, the presentations $\langle x, y | yx^2, e \rangle$ and $\langle x | e \rangle$ of $\mathbb{Z}$ are not equivalent under the stable Andrews-Curtis moves, since in the former $x = \pm 1$ and $y = \mp 2$ whereas in the latter $x = \pm 1$. Therefore, several authors have considered further modifications that one can make to a presentation without changing its deficiency. (See for example \cite{HogAngMetzler}.) In this section, we consider some of these modifications. In Section \ref{Sec:AlternativeApproach}, we will also recall other modifications that one can make to a presentation.

\subsection{A generalised stable move}
It is quite natural to introduce the following move, which is a generalisation of (4):
\begin{enumerate}
\item[$(4^+)$] given any word $w$ of length at most $2$ in the generators $x_1, \dots, x_n$ and their inverses, introduce a new generator $x_{n+1}$ and a new relation $x_{n+1} w^{-1}$, or the
reverse of this operation, provided the remaining relations do not include the letters $x_{n+1}$ or $x_{n+1}^{-1}$.
\end{enumerate}
Of course, we could have allowed words $w$ of any length, and this move would not have changed the group. However, in that case, there would be infinitely many possible moves that could be applied to a group presentation, which is perhaps not ideal from an algorithmic perspective.

Note that the above two presentations of the infinite cyclic group are related by a $(4^+)$-move. So, for non-trivial groups, moves (0)-($4^+$) are strictly more general than the stable Andrews-Curtis moves. However, the following proposition shows that, for the trivial group, any $(4^+)$-move can be replaced by a sequence of stable Andrews-Curtis moves.

\begin{proposition}
\label{Lem:Move4+}
Let $P$ be a balanced presentation of the trivial group. Suppose that there is a sequence of moves $(0)$-$(4^+)$ with length $m$ that takes $P$ to the standard one-generator presentation. Then there is a sequence of stable Andrews-Curtis moves with length at most $4^{m+3}$ that also takes $P$ to the standard one-generator presentation.
\end{proposition}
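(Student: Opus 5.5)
The plan is to eliminate the $(4^+)$-moves one at a time by \emph{substitution}, so that the new generator $x_{n+1}$ is never actually introduced. Each later occurrence of it is replaced by the word $w$ it is equal to.

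Let $P = P_0, P_1, \dots, P_m$ be the given sequence, ending at $\langle x_1 \mid x_1 \rangle$. Suppose the move $P_{i-1} \to P_i$ is a forward $(4^+)$-move introducing $x_{n+1}$ and the relation $x_{n+1} w^{-1}$, with $|w| \le 2$. Define the homomorphism $\phi$ of free semigroups that sends $x_{n+1}^{\pm 1}$ to $w^{\pm 1}$ (the inverse word) and fixes the other letters. Apply $\phi$ to every relation of $P_i, \dots$ until $x_{n+1}$ is removed again by a reverse move, or until the end.

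In the substituted sequence the relation $x_{n+1}w^{-1}$ becomes $ww^{-1}$. Rather than deleting it, I would keep $x_{n+1}$ as a generator and record the relation as $x_{n+1}w^{-1} \mapsto x_{n+1}$. Concretely, in the modified sequence the forward $(4^+)$-move is replaced by a genuine move $(4)$, adding $x_{n+1}$ with relation $x_{n+1}$, while every other relation is kept in $\phi$-substituted form. Now check move by move that each subsequent move of $P_i \to P_{i+1}$ becomes a short sequence of stable Andrews--Curtis moves on the substituted presentations:
\begin{itemize}
\item a move $(0)$ or $(1)$ on an unsubstituted relation becomes at most $|w| \le 2$ moves of type $(0)$ (or a single $(1)$);
\item a move $(3)$ conjugating by $x_{n+1}$ becomes at most two conjugations, by the letters of $w$;
\item a move $(2)$ multiplying by the special relation $x_{n+1}w^{-1}$ is the delicate case. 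The substituted image multiplies by $ww^{-1}$, which is realised by at most $2|w|$ moves $(0)$, so the generator $x_{n+1}$ really does occur in these relations and is handled by honest AC moves.
\end{itemize}
The reverse $(4^+)$-move deleting $x_{n+1}$ becomes a reverse $(4)$-move, which is legal because after substitution no other relation mentions $x_{n+1}$. Since substitutions nest (one substitution may occur inside the scope of another), a single letter can expand to a word of length up to $2^m$. So each original move becomes at most roughly $2 \cdot 2^m$ moves, and the total is bounded by about $m \cdot 2^{m+1}$.

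The main obstacle is the endpoint. After all substitutions, the final presentation is $\langle x_1 \mid \phi(x_1) \rangle$, and this is only the standard one if $x_1$ is not a substituted generator. If the surviving generator was introduced by a $(4^+)$-move whose partner generators were later removed, $\phi$ collapses badly. To handle this I would substitute only the generators whose introducing move is later reversed, or that coexist with their defining word. For the surviving generator, I would instead run the argument for the reversed sequence, substituting in the opposite direction. When $x_{n+1}$ was introduced with relation $x_{n+1}w^{-1}$ and some letter of $w$ is later eliminated, I would express that letter in terms of $x_{n+1}$ and the other letter of $w$. Since $|w| \le 2$, this is again a word of length at most $2$, which is exactly why the restriction $|w| \le 2$ is imposed.

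Balancing these two kinds of substitution is where the extra factor arises. Each substitution can at most double word lengths in both directions, giving a multiplicative factor of $4$ per move. Together with a constant number of moves per step and the $4^3$ slack for the final cleanup (converting $\langle x \mid x^{\pm1}\rangle$-type endings and accounting for inverses), this should yield the bound $4^{m+3}$.
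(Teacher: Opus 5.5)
\textbf{There is a genuine gap.} Your simulation is not well defined once the special relation is touched. It also never uses the fact that makes the proposition true, namely that $P$ presents the trivial group.

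You substitute $x_{n+1}\mapsto w$ in every relation except the special one, which you record as $x_{n+1}$. That is not the image under any single map, so it stays consistent only while $x_{n+1}w^{-1}$ is used purely as a multiplier and is never itself changed. Later moves may invert it, conjugate it, or multiply it by other relations, and your bullet list gives no image for such moves. Your third bullet is also self-contradictory: after substitution, $x_{n+1}$ does not occur in the other relations.

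Worse, a later reverse $(4^+)$-move may delete $x_{n+1}$ together with a \emph{different} relation $x_{n+1}u^{-1}$, with $u\neq w$ in the free group. For example, from $\langle x_1,x_2\mid x_1, x_2x_1^{-1}\rangle$ a few AC moves reach $\langle x_1,x_2\mid x_2, x_1\rangle$, and then $x_2$ is deleted with the relation $x_2$. The substituted image of the deleted relation is then $wu^{-1}$, a nontrivial word in $x_1,\dots,x_n$ that no reverse $(4)$-move can discard, while your placeholder relation $x_{n+1}$ is left over. The proposed endpoint repair (substituting ``in the opposite direction'' and ``balancing'' the two substitutions) is not an argument. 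The factor $4$ per move and the $4^3$ slack are not derived from anything.

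\textbf{Why a local substitution cannot work.} For non-trivial groups the $(4^+)$-move is strictly stronger than the stable moves. The presentations $\langle x,y\mid yx^2,e\rangle$ and $\langle x\mid e\rangle$ of $\mathbb{Z}$ differ by one $(4^+)$-move but are not stably AC-equivalent. So any valid simulation must use triviality of the group in an essential way, and yours does not.

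\textbf{The missing idea.} Simulate each $(4^+)$-move locally, at the presentation $P$ where it occurs, and leave the rest of the sequence untouched:
\begin{enumerate}
\item Add $x_{n+1}$ with relation $x_{n+1}$ by a move $(4)$.
\item Since $w$ lies in the normal closure of the relations of $P$, multiply this new relation by conjugates of the relations of $P$ and their inverses.
\item Freely reduce the result to $x_{n+1}w^{-1}$.
\end{enumerate}

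\textbf{Counting the moves.} You need quantitative control, which comes from induction along the remaining moves. Each generator is a product of at most $2^{m+1}-1$ conjugates of relations, with conjugators of length at most $2^m$ (Lemma~\ref{Lem:GensProdsConj}). Each relation has length at most $2^{m+1}$ (Lemma~\ref{Lem:RelationLength}). This gives fewer than $2^{2m+5}$ stable moves per $(4^+)$-move. Summing over the sequence, $\sum_{k=1}^m 2^{2k+5}\le 4^{m+3}$.
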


A key step in the proof of this is the following lemma, which allows one to express a generator in a presentation of the trivial group as a product of conjugates of the relations and their inverses.

\begin{lemma}
\label{Lem:GensProdsConj}
Let $P = \langle x_1, \dots, x_n | r_1, \dots, r_n \rangle$ be a balanced presentation of the trivial group. Suppose that there is a sequence of moves $(0)$-$(4^+)$ with length $m$ that takes $P$ to the standard one-generator presentation. Then each generator of $P$ is equal in the free group on $x_1, \dots, x_n$ to a product of conjugates of the relations of $P$ and their inverses, where the number of such relations is at most $2^{m+1}-1$ and the length of each conjugating element is at most $2^m$.
\end{lemma}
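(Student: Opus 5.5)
We argue by induction on $m$, running the sequence of moves backwards from the standard one-generator presentation $\langle x_1 | x_1 \rangle$. For $m=0$ the claim is immediate, since $x_1 = r_1$ is a single relation with trivial conjugator. For the inductive step, let $P$ be taken by the first move to $P'$, where $P'$ reaches the standard presentation in $m-1$ moves. By induction, each generator of $P'$ is a product of at most $2^{m}-1$ conjugates of relations of $P'$ and their inverses, with conjugating elements of length at most $2^{m-1}$. We then translate this into an expression over the relations of $P$, treating each move type in turn.

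\textbf{Moves (0)--(3).} These fix the generators. For a move of type (0) or (3), each relation of $P'$ is freely equal to a conjugate of a relation of $P$ (or of its inverse, for type (1)) by an element of length at most $1$. The count of factors is unchanged, and the conjugator length grows by at most $1 \le 2^{m-1}$. For a move of type (2), $r_i' = r_i r_j$, so each occurrence of $r_i'$ becomes two conjugates. The number of factors at most doubles, to $2(2^m-1) \le 2^{m+1}-1$, and the conjugators are unchanged (for $r_j r_i$, one conjugator grows by the length of $r_j$).

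\textbf{The stabilising move $(4^+)$.} This is the step I expect to be the main obstacle. Suppose first that $P'$ is $P$ together with a new generator $x_{n+1}$ and relation $x_{n+1}w^{-1}$. In the expression for an old generator $x_i$ of $P'$, we apply the retraction $x_{n+1}\mapsto w$ of free groups. This kills the relation $x_{n+1}w^{-1}$, fixes the other relations (they avoid $x_{n+1}$), and fixes $x_i$. The number of factors does not increase, but conjugator lengths may double because $|w|\le 2$. This doubling is exactly why the bound on conjugator length is $2^m$.

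Conversely, suppose $P'$ is $P$ with a generator $x_{n+1}$ and its relation $x_{n+1}w^{-1}$ removed. The generators $x_1,\dots,x_n$ of $P$ are handled by the inductive expressions, since the relations of $P'$ are relations of $P$. For $x_{n+1}$ we write $x_{n+1} = (x_{n+1}w^{-1})\, w$ and substitute the expressions for the at most two letters of $w$. This gives $1 + 2(2^m-1) = 2^{m+1}-1$ factors, which accounts for the factor-count bound. The conjugators do not lengthen, because conjugates remain conjugates when multiplied on the right.

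The delicate part is the bookkeeping. The two bounds must grow in the stated way simultaneously: factors doubling at type (2) and $(4^+)$-removal moves, and conjugator length doubling at $(4^+)$-addition moves. We therefore need to check that in each case at most one of the two quantities is worsened by the stated factor. If in some case both would worsen, the fallback is to keep the separate counts $2^{m+1}-1$ and $2^m$ as crude bounds and verify the recurrences $F_m \le 2F_{m-1}+1$ and $L_m \le 2L_{m-1}$ (with the additive constants for types (2) and (3) absorbed). These recurrences still give $F_m \le 2^{m+1}-1$ and $L_m \le 2^m$.
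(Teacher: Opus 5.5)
Your proof is correct and is essentially the paper's argument. It uses induction on $m$, the retraction $x_{n+1}\mapsto w$ (which kills $x_{n+1}w^{-1}$ and at most doubles conjugator length) when $P'$ carries the extra generator, and the identity $x_{n+1}=(x_{n+1}w^{-1})w$, which gives $1+2(2^m-1)=2^{m+1}-1$ factors when $P$ carries it. It also doubles the factor count at type (2) and adds $1$ to conjugator length at type (3).

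Because the two bounds are tracked separately, your worry about a single move worsening both is moot. The recurrences $F_m\le 2F_{m-1}+1$ and $L_m\le\max(2L_{m-1},\,L_{m-1}+1)$ already give the stated bounds.

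One slip needs correcting. When $r_i$ is replaced by $r_jr_i$, no conjugator grows, because $g(r_jr_i)g^{-1}=(gr_jg^{-1})(gr_ig^{-1})$. This matters: an increase by $|r_j|$, as you wrote it, is not an additive constant and could not be absorbed into the bound $2^m$.
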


\begin{proof}
This is by induction on $m$. When $m = 0$, the presentation is $\langle x_1 | x_1 \rangle$, and so when $x_1$ is expressed as a product of conjugates of the relations, only one relation is needed and no conjugations are required. So consider the inductive step, where $P$ is related to $P'$ via a move of type $(0)$-$(4^+)$, and $P'$ is related to the standard one-generator presentation using $m - 1$ such moves. Consider some generator $x_k$ in $P$. Our aim is to write $x_k$ as a product of conjugates of the relations of $P$ and their inverses. 

Suppose first that $x_k$ is introduced in a $(4^+)$ move taking $P'$ to $P$, where the new relation is $x_k w^{-1}$ and $w$ is a word of length at most 2 in the other generators of $P'$. Inductively, $w$ is a product of at most $2(2^m - 1)$ conjugates of relations of $P'$, and where the conjugating words have length at most $2^{m-1}$. So $x_k = (x_k w^{-1}) w$ is a product of at most $2(2^m - 1) + 1 = 2^{m+1} - 1$ conjugates of relations of $P$, and where the conjugating words again have length at most $2^{m-1}$. This proves the inductive step in this case. 

Now suppose that the move from $P'$ to $P$ is a type $(4^+)$ move that introduces a generator $x_n$ other than $x_k$.
Then $x_k$ is also a generator in $P'$ and hence inductively can be written as a word $u$ which is a product of conjugates of at most $2^{m} - 1$ relations in $P'$ and their inverses, where the length of each conjugating element is at most $2^{m - 1}$. This word $u$ is the required product of conjugates of the relations of $P$ and their inverses.

Consider the case where the move from $P'$ to $P$ is a type $(4^+)$ move that removes a generator $x_{n+1}$ and a relation $x_{n+1} w^{-1}$ where $w$ is a word of
length at most $2$ in the remaining generators. Inductively, $x_k$ is a word $u$ which is a product of conjugates of at most $2^{m} - 1$ relations in $P'$ and their inverses, where the length of each conjugating element is at most $2^{m - 1}$. There is a homomorphism $\phi$ from the free group $\langle x_1, \dots, x_{n+1} \rangle$ to the free group $\langle x_1, \dots, x_n \rangle$ sending $x_{n+1}$ to $w$ and sending $x_i$ to $x_i$ for each $i \leq n$. Apply this homomorphism $\phi$ to the word $u$. This sends the relation $x_{n+1}w^{-1}$ to the identity element, and it sends the remaining relations of $P'$ to themselves. The effect on each conjugating word is to increase its length by at most a factor of $2$. Thus, $x_k$ is equal in $\langle x_1, \dots, x_n \rangle$ to $\phi(u)$, which is a product of at most $2^{m} - 1$ relations in $P$ and their inverses, and where the each conjugating word has length at most $2^m$. This proves the induction in this case.

We now consider the remaining cases where $P$ is obtained from $P'$ by an Andrews-Curtis move of type (0)-(3).
If the move from $P'$ to $P$ replaces $r_i$ by $r_i^{-1}$ or introduces or removes a generator and its inverse to some relation, then again the same word $u$ can be used for $x_k$. If the move from $P'$ to $P$ replaces some $r_i$ by $r_i r_j$, then we replace each instance of $g r_i g^{-1}$ in $u$, where $g$ is a conjugating element, by $g (r_i r_j) g^{-1} g r_j^{-1} g^{-1}$. This increases the number of relations in the word by at most a factor of $2$ and it does not increase the length of any conjugating word. The same argument works when the Andrews-Curtis move replaces some $r_i$ by $r_j r_i$ for some $j$. Finally, when the Andrews-Curtis move replaces some $r_i$ by $x_j r_i x_j^{-1}$ or $x_j^{-1} r_i x_j$, then the number of relations in the word remains unchanged but the conjugating elements increase their length by at most 1.
\end{proof}

\begin{lemma}
\label{Lem:RelationLength}
Let $P = \langle x_1, \dots, x_n | r_1, \dots, r_n \rangle$ be a balanced presentation of the trivial group. Suppose that there is a sequence of moves $(0)$-$(4^+)$ with length $m$ that takes $P$ to the standard one-generator presentation. Then each $r_i$ has length at most $2^{m+1}$.
\end{lemma}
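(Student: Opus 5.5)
Argue by induction on $m$, running the sequence of moves backwards from the standard one-generator presentation, exactly as in the proof of Lemma~\ref{Lem:GensProdsConj}. For $m=0$ the presentation is $\langle x_1 | x_1\rangle$, whose single relation has length $1 \le 2$. For the inductive step, let $P'$ be the presentation obtained from $P$ by the first move, so that $P'$ reaches the standard presentation in $m-1$ moves. By induction every relation of $P'$ has length at most $2^{m}$. It then suffices to show that a single move from $P'$ to $P$ produces relations of length at most twice the maximum relation length of $P'$, or at most $2^{m+1}$ outright.

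Go through the move types in turn. A move of type (1) does not change lengths. A move of type (0) changes a length by $2$, and a move of type (3) changes it by at most $2$, giving length at most $2^m+2\le 2^{m+1}$ when $m\ge 1$. A move of type (2) replaces $r_i$ by $r_ir_j$, whose length is at most $2^m+2^m=2^{m+1}$. A $(4^+)$ move that introduces a generator adds the relation $x_{n+1}w^{-1}$ of length at most $3\le 2^{m+1}$. A move that removes a generator and its relation only deletes a relation, so lengths do not increase.

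The only case needing care is the bookkeeping in types (0) and (3), where the additive bound $2^m+2$ must be compared with $2^{m+1}$. This holds since $m\ge1$ in the inductive step. No real obstacle is expected: the bound is deliberately generous, doubling at each step because of move (2), and every other move adds at most a constant that is absorbed by the doubling.
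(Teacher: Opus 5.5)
Your proof is correct and follows the paper's own argument: induction on $m$, with a case analysis over the move types that gives the bound $\max\{2^m+2,\,2^{m+1},\,3\}=2^{m+1}$. One small point applies equally to the paper: passing from $P'$ back to $P$ is really the inverse of a move, and the inverse of a type (2) move only shortens a relation, so your bounds still cover every case.
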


\begin{proof}
This is by induction on $m$. For $m = 0$, the only relation has length $1$ and so the lemma holds in this case. For the inductive step, suppose that $m \geq 1$ and that the claim is true for sequences of moves with length at most $m-1$.  Each move of type (0) or (3) increases the maximal length of a relation by at most $2$. Moves of type (1) do not change the lengths of the relations. Each move of type (2) at most doubles the maximal length of each relation. Each move of type $(4^+)$ introduces a relation of length at most 3. Hence, the maximal relation length is at most $\max \{  2^m + 2, 2^{m+1}, 3 \} = 2^{m+1}$, as required.
\end{proof}

\begin{proof}[Proof of Proposition \ref{Lem:Move4+}]
Suppose that $P = \langle x_1, \dots, x_n | r_1, \dots, r_n \rangle$ can be converted into $P'$ using a $(4^+)$ move, and that $P'$ is related to the standard one-generator presentation using at most $m-1$ moves of the form $(0)$-$(4^+)$. 

Say that the $(4^+)$ move taking $P$ to $P'$ introduces the generator $x_{n+1}$ and the relation $x_{n+1} w^{-1}$, where $w$ is a word of length at most $2$ in the remaining generators. By Lemma \ref{Lem:GensProdsConj}, each generator is equal, in the free group on $x_1, \dots, x_n$, to a product of conjugates of the relations of $P$ and their inverses, where the number of such relations is at most $2^{m+1}-1$ and the length of each conjugating element is at most $2^{m}$. So $w$ is equal, in $\langle x_1, \dots, x_n \rangle$, to a product of conjugates of the relations of $P$ and their inverses, where the number of such relations is at most $2^{m+2}-2$ and the length of each conjugating element is at most $2^{m}$. This product reduces to $w$ by a sequence of type (0) moves. The number of such moves is at most half the length of the initial expression plus 1. Hence, by Lemma \ref{Lem:RelationLength}, the number of moves is at most $(2^{m+2}-2)(2^{m+1} + 2^{m+1}) + 1 = 2^{2m+4} -2^{m+3} + 1$.

Consider, for example, where only one relation is used in this expression for $w$ and so $w$ is equal in the free group $\langle x_1, \dots, x_n \rangle$ to $g r_i g^{-1}$ for some $i$ and for some word $g$ with length at most $2^{m}$. Then, we relate $P$ to $P'$ using the following sequence of stable AC moves. First, introduce the generator $x_{n+1}$ and relation $x_{n+1}$. Then replace $r_i$ by $g^{-1} r_i^{-1} g$ using a type (1) move and a sequence of (3) moves with length equal to the length of $g$. Then multiply the relation $x_{n+1}$ by this new relation $g^{-1} r_i^{-1} g$. Then apply (0) moves to reduce this relation to $x_{n+1} w^{-1}$. Finally, replace the relation $g^{-1} r_i^{-1} g$ by $r_i$ by undoing the conjugations and applying type (0) moves followed by a type (1) move. In general the number of stable AC moves that we need is at most $1 + (2^{m+2}+3)(2^{m+2}-2) + 2^{2m+4} - 2^{m+3} +1 < 2^{2m+5}$, where the initial 1 arises from the initial $(4)$ move, and the factor $2^{m+2}-2$ is the upper bound for the number of relations. 

The same argument works if the $(4^+)$ move taking $P$ to $P'$ removes the generator $x_{n+1}$ and the relation $x_{n+1} w^{-1}$. We simply reverse the above sequence of moves, replacing the $(4^+)$ move by at most $2^{2m+5}$ stable AC moves.

Now applying this to each of the $(4^+)$ moves in the sequence, the number of stable AC moves is most
$\sum_{k=1}^m 2^{2k+5} \leq 4^{m+3}$.
\end{proof}

\subsection{Nielsen equivalent generators}
\label{Sec:Nielsen}
As discussed above, the stable Andrews-Curtis moves do not change the group elements that the generators represent. Aside from the $(4^+)$ move, one can also deal with this by changing the generators via an automorphism of $\langle x_1, \dots, x_n \rangle$. Specifically, suppose that $\phi \colon \langle x_1, x_1^{-1}, \dots, x_n, x_n^{-1} \rangle \rightarrow \langle x_1, x_1^{-1} \dots, x_n, x_n^{-1} \rangle$ is a semi-group homomorphism that descends to a group automorphism of $\langle x_1, \dots, x_n \rangle$. Then from the presentation $\langle x_1, \dots, x_n | r_1, \dots, r_n \rangle$, one obtains a new presentation $\langle x_1, \dots, x_n | \phi(r_1), \dots, \phi(r_n) \rangle$. Unless $\phi$ fixes  a generator $x_i$, it is convenient to relabel $x_i$ in the latter presentation by $x'_i$, since it will in general represent a new element of the group.

Recall \cite{Nielsen} that the automorphism group of the free group is finitely generated by the Nielsen generators. When applied to a group presentation, some of these do not make an essential change to the presentation. For example, when an automorphism of $\langle x_1, \dots, x_n \rangle$ that cyclically permutes the generators is applied, the effect on a finite presentation $\langle x_1, \dots, x_n | r_1, \dots, r_n \rangle$ can be viewed simply as relabelling of the generators. However, some Nielsen generators do have a substantive effect on a presentation. These are as follows:
\begin{enumerate}
\item[(5)] Replace generator $x_i$ by $x'_i$, and replace each instance of $x_i$ in any relation by $x'_i x_j$ for some $j\not=i$, and each instance of 
$x_i^{-1}$ by $x_j^{-1} (x_i')^{-1}$.
\item[(6)] Replace generator $x_i$ by $x'_i$, and replace each instance of $x_i$ in any relation by $(x'_i)^{-1}$ and each instance of $x_i^{-1}$ by $x'_i$.
\end{enumerate}

\begin{lemma}
\label{Lem:Nielsen}
Each type $(5)$ or $(6)$ move can be replaced instead by a composition of at most $10\ell+3$ instances of moves $(0)$-$(4^+)$, where $\ell$ is length of the presentation.
\end{lemma}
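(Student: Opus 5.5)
Both moves can be realised by introducing the new generator with a $(4^+)$ move, rewriting every occurrence of $x_i$ in terms of it, and then deleting $x_i$ with a reverse $(4^+)$ move.

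\emph{Type $(5)$ move.} Start from $\langle x_1,\dots,x_n \mid r_1,\dots,r_n\rangle$.
\begin{enumerate}
\item Apply a $(4^+)$ move with the length-two word $w = x_i x_j^{-1}$. This adds a generator $x_i'$ and the relation $x_i' x_j x_i^{-1}$, which is a valid choice of $x_i' w^{-1}$. One move.
\item Clear $x_i$ from each old relation $r_k$. Suppose $r_k = a x_i b$, where $a$ and $b$ are subwords. By type $(3)$ moves, cyclically conjugate $r_k$ to $x_i b a$. Then multiply on the left by the inverse of the new relation, $x_i x_j^{-1}(x_i')^{-1}$; this needs one type $(1)$ move, then a type $(2)$ move, then a type $(1)$ move to restore the new relation. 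The result is $x_i x_j^{-1}(x_i')^{-1} x_i b a$.
\end{enumerate}
This last step does not do what we want: the unwanted $x_i$ sits next to $(x_i')^{-1}$, not next to its own inverse. To cancel it we want the form of the new relation with $x_i$ in the right place, namely $x_i^{-1} x_i' x_j$ or $x_j^{-1}(x_i')^{-1} x_i$. The latter is a cyclic conjugate of the inverse of $x_i' x_j x_i^{-1}$.

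So I would first put the new relation into the form $x_i'x_jx_i^{-1}$ and conjugate it by $x_i^{-1}$. This gives $x_i^{-1}x_i'x_j$, at a cost of about three type $(3)$ and type $(0)$ moves.

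Then multiply $x_i b a$ on the left by $x_i'x_jx_i^{-1}$. Free reduction by a type $(0)$ move gives $x_i'x_j b a$, which replaces that occurrence of $x_i$ by $x_i'x_j$. Conjugating back restores $a x_i' x_j b$.

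The bookkeeping is the delicate part. Repeatedly cycling each relation so that every occurrence sits at the front costs conjugations proportional to the relation length. Instead, I would use the idea of a pointer moving through the relation. Conjugate by the letters of $r_k$ one at a time, in order, so that each letter in turn sits at the front. At each occurrence of $x_i^{\pm1}$, multiply by the appropriate form of the new relation and apply a $(0)$ move. After one full pass, the word is back in its original cyclic position.

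The cost per letter is bounded by a constant: about one conjugation, plus a few moves (multiply, invert, cancel) at each occurrence of $x_i$. Inverse occurrences $x_i^{-1}$ are handled the same way, using $x_j^{-1}(x_i')^{-1}$ and multiplying on the other side. Switching the new relation between its two needed forms costs $O(1)$ moves each time. Summing gives at most about $10\ell$ moves. I will not check the exact constant $10$ here; it comes from counting these per-letter operations.

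\begin{enumerate}
\setcounter{enumi}{2}
\item Now no relation other than the new one contains $x_i$, and the new relation is equivalent to $x_i (x_i' x_j)^{-1}$. Apply a reverse $(4^+)$ move with the word $x_i' x_j$ to delete $x_i$. Together with the initial $(4^+)$ move and the normalisation of the new relation, this accounts for the $+3$.
\end{enumerate}

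\emph{Type $(6)$ move.} This is identical, using the length-one word $w = x_i^{-1}$ and the relation $x_i' x_i$. It is cheaper, so the same bound holds.

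The main obstacle is the accounting: making each occurrence of $x_i$ cost $O(1)$ moves rather than $O(\ell)$, which is exactly what the single-pass pointer argument is designed to achieve.
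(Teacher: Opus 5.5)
Your proposal follows the paper's proof: introduce $x_i'$ with the relation $x_i'x_jx_i^{-1}$ by a $(4^+)$ move, then cycle each relation one letter at a time with moves $(3)$ and $(0)$, substituting at each occurrence of $x_i^{\pm1}$ by multiplying by the new relation or its inverse and cancelling, and finally remove $x_i$ with a $(1)$ move and a reverse $(4^+)$ move, which is exactly the paper's $+3$. Two small fixes: conjugating the new relation to $x_i^{-1}x_i'x_j$ is unnecessary and clashes with your next line, since left-multiplying $x_iba$ by $x_i'x_jx_i^{-1}$ already cancels, as in the paper; and the constant you left unchecked does close, because cycling costs $2$ moves per letter and each occurrence of $x_i^{\pm1}$ costs at most $6$ (for $x_i$: multiply, cancel, then cycle past $x_i'x_j$; for $x_i^{-1}$: cycle it to the back, then invert the new relation, right-multiply, cancel and restore), so at most $6\ell+3$ moves are used.
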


\begin{proof}
We show how move (5) can be achieved using moves (0)-($4^+$). First introduce a new generator $x'_i$ as well as a relation $x'_i x_j x_i^{-1}$.
Now consider any relation of the presentation. We will cyclically permute the letters in this relation by applying a sequence of type (3) and (0) moves.
As we do this, instances of $x_i$ come to the front of the relation. At that point, we can premultiply by the new relation and then cancel $x_i^{-1} x_i$, to replace $x_i$ by 
$x'_i x_j$. Similarly, if an instance of $x_i^{-1}$ comes to the front, we first replace the relation $x'_i x_j x_i^{-1}$ by its inverse $x_i x_j^{-1} (x_i')^{-1}$, we then
cyclically permute it to get $x_j^{-1} (x_i')^{-1} x_i$, we then premultiply our given relation by this and cancel $x_i x_i^{-1}$, and then we return the extra relation to $x'_i x_j x_i^{-1}$.
Finally remove the generator $x_i$ and the relation $x'_i x_j x_i^{-1}$ using $(1)$ and $(4^+)$. The number of moves we have performed is the number of $(4^+)$ moves and the final $(1)$ move, which is 3, 
plus the total number of cyclic permutations, which is at most $\ell$, multiplied by the number of moves involved at each instance of $x_i$ or $x_i^{-1}$, which is at most $10$. 

Essentially the same argument works for move (6).
\end{proof}

\subsection{Removing the supplementary moves}
\label{Subsec:RemoveMoves}

\begin{proposition} 
\label{prop:Remove4Plus56}
Let $P$ be a balanced presentation of the trivial group. Suppose that $P$ can be transformed to the
standard one-generator presentation using a sequence of at most $m$ moves of type $(0)$, $(1)$, $(2)$, $(3)$, $(4^+)$, $(5)$ and $(6)$,
and that whenever such a move is performed, the length of the presentation is at most $L$.
Then $P$ can also be transformed to the standard one-generator presentation using at most 
$4^{12Lm + 3}$
stable AC moves.
\end{proposition}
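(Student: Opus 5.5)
The plan is to eliminate the moves of type $(5)$ and $(6)$ first, and then the $(4^+)$ moves, using the two earlier results in that order.

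First, go through the given sequence of at most $m$ moves. Each move of type $(5)$ or $(6)$ is applied to a presentation of length at most $L$. By Lemma \ref{Lem:Nielsen}, each such move can be replaced by at most $10L+3$ moves of type $(0)$--$(4^+)$. Moves of type $(0)$--$(4^+)$ are kept as they are. The result is a sequence of moves $(0)$--$(4^+)$ taking $P$ to the standard one-generator presentation, of length
$$m' \le m(10L+3) \le 12Lm - 3.$$
Here we use that $L \ge 1$ and $m \ge 1$. We may assume $m \geq 1$: when $m=0$, $P$ is already standard and there is nothing to prove. With $L\ge 1$ and $m\ge1$ we get $m(10L+3) + 3 \le 13Lm$. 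This is slightly more than $12Lm$, so the constant needs a little care.

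The cleaner route is to check that the replacement in Lemma \ref{Lem:Nielsen} is cheaper than its stated bound. The number of cyclic permutations is bounded by $\ell$, and the number of occurrences of $x_i^{\pm1}$ is also bounded by $\ell$. So the count $10\ell+3$ can be absorbed into at most $12\ell - 3$ whenever $\ell\ge 3$. Alternatively, note that presentations of length at most $2$ admit only trivial cases. Either way, we aim for
$$m' \le 12Lm - 3.$$

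Second, apply Proposition \ref{Lem:Move4+} to this new sequence, which has length $m'$. It gives a sequence of stable AC moves of length at most
$$4^{m'+3} \le 4^{12Lm},$$
which is within the required bound $4^{12Lm+3}$. Note that the extra $+3$ in the exponent of the statement gives slack. We only need $m' \le 12Lm$, and that follows directly from $10L+3 \le 12L$ for $L \ge 2$. For $L=1$, the presentation is $\langle x_1\,|\,x_1\rangle$ throughout, and the claim is trivial.

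The main subtlety is the bookkeeping hidden in Lemma \ref{Lem:Nielsen}. Its proof introduces an auxiliary generator and relation, and the intermediate presentations it produces are longer. This does not matter, because Proposition \ref{Lem:Move4+} only depends on the number of moves and not on the lengths of the intermediate presentations. We also need Lemma \ref{Lem:Nielsen} to be applied with $\ell$ equal to the length at the moment the move is performed. The hypothesis that every such length is at most $L$ gives exactly this, which completes the plan.
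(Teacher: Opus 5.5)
Your proposal is correct and, once you settle on the final route, it is the paper's proof: use Lemma~\ref{Lem:Nielsen} to replace each $(5)$ or $(6)$ move by at most $10L+3\le 12L$ moves of type $(0)$--$(4^+)$, which needs $L\ge 2$ (the case $L\le 1$ is degenerate), giving at most $12Lm$ such moves, and then apply Proposition~\ref{Lem:Move4+}. Drop the earlier claim $m(10L+3)\le 12Lm-3$. It fails for small $L$ (for example $L=1$, where it would say $13m\le 12m-3$), and it is not needed because the $+3$ in the exponent already absorbs the slack.
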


\begin{proof}
We first remove each instance of move (5) or (6) using Lemma \ref{Lem:Nielsen}. By assumption,
the length of the presentation when such a move is performed is at most $L$. We may assume
that $L$ is at least 2, as otherwise there is no need to apply the move. Hence, the upper bound 
$10L+3$ from Lemma \ref{Lem:Nielsen} is at most $12L$. So, the result
is a sequence of moves $(0)$ to ($4^+$) with length at most $12Lm$.  Then Proposition \ref{Lem:Move4+}
replaces this with a sequence of stable AC moves with length at most $4^{12Lm+3}$,
as required.
\end{proof}

Our main result, Theorem \ref{Thm:StableACBound}, follows immediately from this proposition as well as the following theorem.

\begin{theorem}
\label{Thm:ThickenableBound}
Let $P$ be a thickenable balanced presentation of the trivial group with length $\ell$. Then $P$ can be transformed 
to the standard one-generator presentation using a sequence of at most
$2^{k\ell^2}$
moves of type $(0)$, $(1)$, $(2)$, $(3)$, $(4^+)$, $(5)$ and $(6)$, where $k \leq 7 \cdot 10^5$. Moreover, when each such move
is applied, the length of the presentation is at most
$2^{k\ell^2}$.
\end{theorem}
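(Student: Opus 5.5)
We follow the strategy outlined in the introduction: convert the thickening of the presentation complex into a triangulated $3$-sphere, simplify it with Pachner moves, and translate each Pachner move into a bounded number of moves of type $(0)$--$(6)$ on a suitable presentation.

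\textbf{Building a triangulation.} Let $K$ be the presentation complex of $P$. Since $K$ is contractible and embeds in a $3$-manifold, its regular neighbourhood $N(K)$ is a $3$-ball by the Poincar\'e conjecture. First we subdivide $K$ so that every $2$-cell is a triangle. A relation of length $\ell_i$ becomes a polygon, which we cone from a new interior vertex. On the presentation side, this subdivision is recorded by $(4^+)$ moves: each new edge is a new generator equal to a word of length at most $2$ in the old generators, which is why that move was introduced. Each new triangle then gives a relation of length $3$. Since subdivision adds vertices, we must also keep a maximal tree of the $1$-skeleton collapsed, or equivalently add the corresponding $(4)$ moves. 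Next we thicken the triangulated $K$ and cap off $\partial N(K)$ with a triangulated $3$-ball to obtain a triangulation $T$ of $S^3$. The number of tetrahedra should be $O(\ell)$, or at worst $O(\ell^2)$, with an explicit constant. The $2$-skeleton of $T$ contains $K$, and $K$ can be recovered from it after collapsing the $3$-cells.

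\textbf{Simplifying the triangulation.} By Mijatovi\'c or King, $T$ is related to the standard two-tetrahedron triangulation of $S^3$ by at most $2^{c' t^2}$ Pachner moves, where $t$ is the number of tetrahedra. This is the source of the $2^{k\ell^2}$ bound, and the constant $k$ will come from $c'$ times the square of the constant in $t=O(\ell)$. Along the whole sequence we carry a balanced presentation of the trivial group, namely the presentation complex of the $2$-skeleton of the current triangulation with one triangle removed for each tetrahedron. More concretely, we can take the spine obtained by deleting one $2$-cell per $3$-cell along a dual spanning tree, after contracting a maximal tree of the $1$-skeleton. We must check two things for each Pachner move ($1$--$4$, $4$--$1$, $2$--$3$, $3$--$2$). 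First, the spine changes by a bounded number of moves $(0)$--$(6)$. Second, changes in the choice of maximal tree or dual tree can be realised by Nielsen moves $(5)$ and $(6)$ together with conjugations. The initial presentation $P$ is related to the spine of $T$ by collapsing the cap ball, which is an elementary collapse sequence and so corresponds to type $(2)$, $(3)$ and $(4)$ moves.

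\textbf{Bounding lengths.} Throughout, relations have length at most a constant, since each comes from a triangle, except for the products created when we handle tree changes. We bound the length by the total number of moves performed so far times a constant, which gives the bound $2^{k\ell^2}$ on presentation length.

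\textbf{Main obstacle.} The hardest step is converting each Pachner move into presentation moves. A Pachner move changes the number of tetrahedra and triangles, so the presentation must remain balanced even though the Euler characteristic of the $2$-skeleton changes. A $2$--$3$ move, for example, adds an edge and two triangles, and we must identify which new triangle is deleted in the spine. I would handle this case by case. The new edge corresponds to a $(4^+)$ generator, the extra triangle is deleted, and the dual tree is updated by sliding. The delicate point is that a change in the dual spanning tree reinserts a previously deleted triangle. That triangle's relation is a consequence of the others via the $3$-cell boundary, so it can be obtained through type $(2)$ and $(3)$ moves with a bounded number of relation multiplications, because each tetrahedron boundary has only four faces. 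Making this uniform, and keeping the lengths controlled, is where I expect the main work to lie.
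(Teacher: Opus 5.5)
Your overall architecture matches the paper's. You subdivide $K$ by $(4^+)$ moves, cap off $\partial N(K)$ to get a triangulation $T$ of $S^3$, and carry the spine presentation $P(T,X,X')$: collapse a maximal tree $X$ of the $1$-skeleton and delete the faces dual to a maximal tree $X'$ of the dual $1$-skeleton. You then translate each of the $M\le at^2 2^{bt^2}$ Pachner moves into moves $(0)$--$(6)$. The gap is at the step you flag as the main obstacle, and your argument there does not work.

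First, a change of the dual tree is not a Nielsen move. An edge swap $X'\mapsto X'-e+e_2$ deletes the relation of the face $f_2$ dual to $e_2$ and inserts that of the face $f$ dual to $e$, so it must be realised by moves $(1)$--$(3)$.

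Second, $r_f$ is indeed a consequence of the other relations, but not via the four faces of one tetrahedron. Unless a tetrahedron $\Delta$ containing $f$ is a leaf of $X'$, some of its other faces are themselves dual to edges of $X'$, hence absent from the spine. What is needed is Lemma~\ref{Lem:EdgeSwapDualTree}:
\begin{itemize}
\item the tetrahedra glued along the faces dual to $X'$ form a $3$-ball $B$;
\item the face $f$ cuts $B$ into balls $B_1$, $B_2$, each with a copy of $f_2$ on its boundary, because $e$ lies on the tree path joining the ends of $e_2$;
\item one slides $\partial f$ across the possibly pinched annulus $\partial B_1\setminus(f\cup f_2)$, one triangle at a time.
\end{itemize}
The smaller side has up to $2t$ triangles, and each slide costs $O(t)$ moves for cyclic permutations. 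So a swap costs $O(t^2)$ moves, not $O(1)$. Likewise an edge swap in $X$ costs about $4(n-1)=4t$ Nielsen moves (Lemma~\ref{Lem:EdgeSwap}). Your claim that the spine changes by a bounded number of moves is therefore unjustified.

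This polynomial cost is harmless for the final count. Every triangulation in the sequence has at most $3M$ tetrahedra, so the total is $O(M^3)\le 2^{k\ell^2}$, provided you really have $t\le 2\ell$. Your allowance of $t=O(\ell^2)$ would spoil the exponent $k\ell^2$.

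However, the ``case by case'' part must actually be done. You need to choose the trees so that each Pachner move becomes at most three $(4^+)$ moves:
\begin{itemize}
\item for a 1--4 move, add one new edge to $X$ and a path of three edges of the new $K_4$ to $X'$, giving relations $x=a$, $y=b$, $z=yc$;
\item for a 4--1 move, first swap edges of $X$ so that only one of the four edges at the vertex lies in $X$, then make at most three swaps of $X'$;
\item 2--3 and 3--2 moves are handled similarly.
\end{itemize}
You also need to reduce the presentations coming from the standard two-tetrahedron triangulation to $\langle x\mid x\rangle$.

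Your length argument fails as stated. Moves $(2)$ and $(5)$ can double lengths, so ``a constant times the number of moves so far'' does not follow. The right reason is that each Pachner step begins and ends at a spine presentation of length $O(t)$, and the length stays $O(t)$ during the step, as in Proposition~\ref{Prop:PachnerMoveChangePres}.

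Finally, collapsing the cap corresponds to $(4^+)$ moves, not just $(2)$--$(4)$. It also needs a cap triangulation for which $K(T,X,X')$ visibly collapses onto the subdivided $K$. The paper cones $\partial B\setminus \mathrm{star}(v)$ from a boundary vertex $v$ avoiding cycles of length $1$ and $2$; coning from a new interior point would also work.
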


Note that the upper bound on the number of moves here is a single exponential (of $k\ell^2$), whereas the upper
bound in Theorem \ref{Thm:StableACBound} is a double exponential. The increase in the height of this exponential tower
is solely due to Proposition \ref{prop:Remove4Plus56}. In summary, the reason for this double exponential rather than a single exponential is because
of the restrictive nature of the stable Andrews-Curtis moves.

\subsection{Triangular presentations}

We give one sample application of the $(4^+)$ move.

\begin{definition}
A presentation of a group is \emph{triangular} if each relation has length 3.
\end{definition}

\begin{lemma} 
\label{Lem:MakeTriangular}
Let $P$ be a balanced group presentation with length $\ell$ and with $n$ generators. Then there is a sequence of at most $4\ell + 18n$ moves of type $(0)$-$(4^+)$ taking
$P$ to a triangular presentation $P'$.
\end{lemma}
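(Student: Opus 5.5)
We handle the relations one at a time, using the $(4^+)$ move to introduce new generators that name two-letter subwords, and the Andrews-Curtis moves to substitute these names into the relations. Each such substitution shortens a relation by one letter, at the cost of one new generator and one new length-3 relation. Balance is preserved automatically, since every $(4^+)$ move adds one generator and one relation together. The steps, in order, are as follows.

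\textbf{Step 1: make every relation have length at least $3$.} Since we are not assuming $P$ presents the trivial group, a relation may a priori be short (even empty), so we treat short relations first. If a relation $r$ has length $\leq 2$, apply a type (0) move to insert $x_1 x_1^{-1}$, possibly twice. This raises its length to between $3$ and $4$ and costs at most $2$ moves per relation. In total this step uses at most $2n$ moves and raises the total length by at most $4n$.

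\textbf{Step 2: shorten each long relation.} Suppose a relation $r = a b u$ has length $k \geq 4$, where $a,b$ are letters. First apply a $(4^+)$ move introducing a new generator $y$ and the relation $y (ab)^{-1} = y b^{-1} a^{-1}$. Then invert this new relation by a type (1) move, obtaining $a b y^{-1}$. We now want to replace the prefix $ab$ of $r$ by $y$:
\begin{itemize}
\item[(i)] Invert the new relation once more, back to $y b^{-1} a^{-1}$.
\item[(ii)] Left-multiply $r$ by it via a type (2) move, which gives $y b^{-1} a^{-1} a b u$.
\item[(iii)] Cancel $a^{-1} a$ and then $b^{-1} b$ using two type (0) moves, leaving $y u$.
\end{itemize}
This is a constant number of moves (about $5$ or $6$). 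It reduces the length of $r$ by one and adds a new relation of length exactly $3$.

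\textbf{Step 3: iterate and count.} Repeating Step 2 until every relation has length $3$ requires at most $\ell + 4n$ iterations in total, since that is the total excess length after Step 1. The resulting presentation is triangular and balanced. The total count is then at most $2n + c(\ell + 4n)$ for the constant $c$ from Step 2. The stated bound $4\ell + 18n$ needs $c \leq 4$ on the $\ell$ term. The extra $n$-budget can then absorb the Step 1 overhead, for example $4 \cdot 4n + 2n = 18n$.

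\textbf{Main obstacle: bookkeeping the move count to reach $4\ell + 18n$ exactly.} To get $c = 4$, I would avoid the double inversion in Step 2. Introduce $y$ directly with relation $y w^{-1}$ where $w = a^{-1}$ composed suitably, or instead use $w = b^{-1} a^{-1}$, so that the new relation is $y a b$. Then multiply $r$ on the left by the inverse-adjusted form only once. This gives the sequence: one $(4^+)$ move, one (1) move, one (2) move and two (0) moves, which is $5$ moves per letter. To reach $4$, I would choose $w$ so that no inversion is needed, say introducing $y$ with relation $y a b$ (taking $w = b^{-1}a^{-1}$, a word of length 2) and then forming $r' = (y a b)^{-1}$-free products via a type (2) move $r_i \mapsto r_j r_i$ with a cyclic arrangement so that cancellation happens. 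This needs $1 + 1 + 2 = 4$ moves. Verifying that such orientation choices always exist, possibly by working from the end of the relation and right-multiplying instead, is the step I would check carefully. Any leftover constant per relation is charged to the $18n$ term.
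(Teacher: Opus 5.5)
Your approach is the paper's: pad each short relation with copies of $x_1x_1^{-1}$ (at most $2n$ moves, leaving length at most $\ell+4n$), then repeatedly name the first two letters $ab$ of a long relation by a new generator using a $(4^+)$ move, premultiply, and cancel twice. The gap is only in the move count, and your ``main obstacle'' paragraph makes it worse rather than resolving it.

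As written, Step 2 costs $6$ moves per iteration, because the two type (1) moves undo each other. That proves only $2n+6(\ell+4n)=6\ell+26n$. The repairs you sketch do not work as stated:
\begin{itemize}
\item With $w=b^{-1}a^{-1}$ the new relation is $yab$. Premultiplying $abu$ by it gives $yababu$, and premultiplying by its inverse gives $b^{-1}a^{-1}y^{-1}abu$. Neither cancels the prefix $ab$.
\item Leftover cost per iteration cannot be charged to the $18n$ term. There are up to $\ell+4n$ iterations, so even one surplus move per iteration adds a term proportional to $\ell$, which the $n$-budget cannot absorb.
\end{itemize}

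The fix is already inside your Step 2. The $(4^+)$ move with $w=ab$ produces exactly $y\,b^{-1}a^{-1}$, which is the relation you use in (ii). So simply delete both inversions. Each iteration is then one $(4^+)$ move, one type (2) move $r_i\mapsto r_jr_i$ giving $y\,b^{-1}a^{-1}abu$, and two type (0) moves leaving $yu$: four moves in all. The total is $2n+4(\ell+4n)=4\ell+18n$, exactly as in the paper. No orientation choices need checking, and you never need to work from the end of the relation.
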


\begin{proof}
We first make each relation have length at least three, by multiplying it by $x_1^2 x_1^{-2}$ if necessary. The number of moves we have made so far is at most $2n$, and the length of the resulting presentation is at most $\ell + 4n$. We now make each relation have length exactly three. Suppose that there is a relation $r_i$ with length more than three, and let $ab$ be its first two letters. We add a new generator $x_{j}$ and a new relation $x_{j} b^{-1} a^{-1}$. We then premultiply $r_i$ by the relation and apply (0) twice to remove $b^{-1} a^{-1} a b$. This removes the initial $ab$ and replaces it by $x_j$. Repeating this at most $\ell + 4 n$ times, we obtain the required triangular presentation.
\end{proof}

\subsection{2-complexes with more than one 0-cell}
\label{Subsec:2Complexes}
It will be convenient to work with finite connected 2-complexes $K$ which may have more than one 0-cell. These specify a presentation for $\pi_1(K)$ once a choice of maximal tree in the 1-skeleton has been made, a choice of orientation is made for each 1-cell, and a basepoint and orientation for the boundary of each 2-cell has been picked. The maximal tree can be collapsed to a single 0-cell. The remaining 1-cells give generators for the presentation, and each 2-cell with a basepoint and an orientation specifies a relation. We say that a choice of orientation on each edge and a choice of orientation and basepoint for the boundary of each 2-cell is a \emph{choice of orientations and basepoints} for the 2-complex.

The choice of orientations and basepoints has only a modest effect on the presentation. A change of orientation on some 1-cell in the maximal tree has no effect on the presentation. A change of orientation on a 1-cell not in the tree replaces the resulting generator by its inverse. In other words, the effect on the presentation is Nielsen move (6). A change of basepoint for some relation results in a cyclic permutation of the relation, which can be achieved by conjugation moves (3) and cancellation moves (0), and a change of orientation replaces the relation by its inverse, which is move (1). In this subsection, we explore the effect of changing the maximal tree, via the following move.

Note that here and throughout this paper, we will use the term \emph{graph} to mean a 1-complex. So, edge loops are permitted, as are multiple edges between a pair of vertices.

\begin{definition} 
Let $G$ be a finite connected graph, and let $X$ be a maximal tree in $G$. Let $y$ be some edge not in the tree. 
Let $p$ be the unique embedded path in $X$ (up to reparametrisation and inversion) between the endpoints of $y$. Let $x$ be some edge in $p$. Removing $x$ from $X$ and replacing it by $y$
gives a new maximal tree. We term this operation as \emph{performing an edge swap}.
\end{definition}

\begin{lemma}
\label{Lem:EdgeSwap}
Let $K$ be a finite connected 2-complex with a choice of orientations and basepoints, and let $X$ be a maximal tree in its 1-skeleton.
Let $P$ be the finite presentation obtained from $K$ and $X$. Let $n$ be the number of generators of $P$. Let $X'$ be obtained from $X$ by swapping the edge $x$ in $X$ with the edge $y$ not in $X$. Let $P'$ be the new presentation for $\pi_1(K)$. Say that the lengths of the presentations $P$ and $P'$ are $\ell$ and $\ell'$. Then $P'$ is obtained from $P$ by at most $4(n-1)$ moves of Nielsen type $(5)$ and $(6)$ and at most $2\ell + \ell'$ moves of type $(0)$.
\end{lemma}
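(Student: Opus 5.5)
The plan is to write down explicitly how the generators attached to $X'$ are expressed in terms of those attached to $X$. This expression will turn out to be a very simple automorphism of the free group. Each new relation will then be the image of an old relation under this automorphism, followed by free cancellation.

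\textbf{Setting up the two generating sets.} Fix the base vertex $v$, and for each edge $e$ not in $X$ let $g_e$ be the loop obtained by running along $X$ from $v$ to the initial point of $e$, traversing $e$, and returning along $X$ to $v$. Define $h_e$ similarly using $X'$. The generators of $P$ are the $g_e$ with $e \notin X$, and those of $P'$ are the $h_e$ with $e \notin X'$. The relations of $P$ and $P'$ are the words obtained by reading each 2-cell boundary in these loops; equivalently, we read the boundary and delete the tree edges.

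Removing $x$ from $X$ splits $X$ into two subtrees: $A$, which contains $v$, and $B$. Since $x$ lies on the $X$-path between the endpoints of $y$, the edge $y$ joins $A$ to $B$. The same partition of the vertices into $A$ and $B$ describes $X'$, with $y$ now playing the role of $x$. Comparing tree paths, we obtain the following.
\begin{itemize}
\item $h_x = g_y^{\varepsilon}$ for some sign $\varepsilon = \pm 1$.
\item For every other non-tree edge $e$, $h_e$ equals $g_e$ with possibly a factor $g_y^{\pm 1}$ adjoined on the left and/or the right. The factors depend only on whether each endpoint of $e$ lies in $A$ or $B$. An endpoint in $A$ contributes nothing, since paths from $v$ to $A$ are identical in $X$ and $X'$. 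An endpoint in $B$ contributes a copy of $g_y^{\pm1}$, because the tree path detours via $y$ instead of $x$.
\end{itemize}
Inverting these formulas, each old generator $g_e$ is $h_e$ multiplied on the left and/or right by $h_x^{\pm1}$, and $g_y = h_x^{\varepsilon}$.

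\textbf{Realising the substitution by moves.} After relabelling $y$ as $x$, the substitution $g_y \mapsto h_x^{\varepsilon}$ is either trivial or a single move $(6)$. For each of the other $n-1$ generators I would proceed as follows.
\begin{itemize}
\item Right multiplication by $h_x^{\pm1}$ is a single move $(5)$, possibly conjugated by $(6)$ on $h_x$.
\item Left multiplication is obtained by applying $(6)$ to $h_e$, then $(5)$, then $(6)$ again.
\end{itemize}
Careful bookkeeping should show that at most $4$ Nielsen moves are needed per generator, giving the bound $4(n-1)$. In particular, the $(6)$ moves on $h_x$ should be shared or absorbed rather than counted separately for each generator.

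\textbf{Cancellation.} Applying the Nielsen moves literally substitutes words into the relations without reducing them. Each letter of a relation of $P$ is replaced by a word of length at most $3$, so the resulting relations have total length at most $3\ell$. These words freely reduce to the relations of $P'$, which have total length $\ell'$. The point to check here is that the boundary reading with respect to $X'$ coincides with the free reduction of the substituted word. Since each $(0)$ move removes two letters, the number of $(0)$ moves is $(3\ell-\ell')/2 \le 2\ell+\ell'$.

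\textbf{Main obstacle.} I expect the hardest step to be the sign and side bookkeeping in the comparison of tree paths. Specifically, one must verify that the correction factors are always the same element $g_y^{\pm1}$, and that they sit in the correct positions for edges running from $A$ to $B$, from $B$ to $A$, and with both ends in $B$. Loops and edges incident to $x$ itself need separate care. Once this is settled, the move count is routine.
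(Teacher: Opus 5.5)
Your overall route is the paper's. Your partition of the vertices into $A$ and $B$ is what the paper gets by collapsing $X - x$ to the two endpoints of $x$, and your left/right correction factors are its substitutions $a \mapsto xax^{-1}$, $b \mapsto bx^{-1}$, $c \mapsto xc$.

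The gap is in the cancellation step, exactly where you flagged it: the relations of $P'$ are in general \emph{not} the free reductions of the substituted words, and this fails for two separate reasons. First, $r'$ is read off the attaching path, so it need not be freely reduced. Suppose the boundary of a 2-cell based in $A$ runs along $x$ and immediately back along $x^{-1}$. Those letters vanish from $r$ but survive as $xx^{-1}$ in $r'$. The substituted word must then be expanded, not reduced, so your count $(3\ell-\ell')/2$ is not valid. The paper handles this by reducing every substituted word completely (at most $2\ell$ moves of type $(0)$) and then expanding up to $r'$ (at most $\ell'$ more). That works provided the substituted word and $r'$ are equal in the free group.

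Second, that equality can itself fail. In your notation, let $u$ be the basepoint of a 2-cell, $\partial$ its boundary loop at $u$, and $\gamma_u$, $\gamma'_u$ the paths in $X$ and $X'$ from $v$ to $u$. Then $r$, read in the $g_e$, represents $\gamma_u \partial \gamma_u^{-1}$, while $r'$, read in the $h_e$, represents $\gamma'_u \partial (\gamma'_u)^{-1}$. When $u \in B$ we have $\gamma_u(\gamma'_u)^{-1} = h_x^{\pm1}$, so the substituted word is a conjugate of $r'$, not $r'$. Moves of type $(0)$ cannot remove a conjugation. A Nielsen automorphism acts on all relations at once, so it cannot fix $A$-based and $B$-based cells simultaneously.

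Concretely, take vertices $s \in A$ and $t \in B$, edges $x\colon s \to t$ and $y\colon t \to s$, and a loop $a$ at $t$. Attach 2-cells along $a$ (based at $t$) and along $xax^{-1}$ (based at $s$). Then $P = \langle a, y \mid a, a\rangle$ and $P' = \langle a, x \mid a, xax^{-1}\rangle$. No automorphism of the free group sends $a$ both to $a$ and to $xax^{-1}$. So you need, for each relation based in $B$, one conjugation move $(3)$ and two $(0)$ moves, or a preliminary cyclic permutation moving its basepoint into $A$.

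The paper's regrouping has the same wrinkle, hidden in ``including the final and initial letter''. At a basepoint at the end of $x$, the regrouped word is $x(\cdots)x^{-1}$, which is a cyclic permutation of the word with $x^{-1}x$ inserted rather than a $(0)$-equivalent of it. The extra cost is a bounded number of moves per relation, harmless where the lemma is applied. Still, your cancellation argument as written does not go through, and the move count in the statement has to allow for these conjugations.
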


\begin{proof}
Let $\overline{K}$ be the 2-complex obtained from $K$ by collapsing all the edges in $X - x$. Thus, $X$ is collapsed to the single edge $x$. The union of $x$ and $y$ then becomes a cycle of length 2. We obtain $P$ by collapsing $x$ in this cycle and forming the resulting presentation, and we obtain $P'$ by instead collapsing $y$. Each 2-cell in $\overline{K}$ descends to a relation in $P$ by removing each instance of $x$ and $x^{-1}$, and a relation in $P'$ by removing each instance of $y$ and $y^{-1}$. Say that we are given such a relation $r$ in $P$. We wish to construct the corresponding relation $r'$ in $P'$. 

Suppose that in the 2-cycle in $\overline{K}$ formed by the edges labelled $x$ and $y$, the edges appear in the order $xy$ rather than $xy^{-1}$. The argument in the latter case is similar and is omitted.

Let $w$ be the attaching word of a 2-cell of $\overline{K}$. We modify this using a sequence of type (0) expansions as follows. Between each pair of successive letters in $w$, including the final and initial letter, examine whether that point in the 2-cell lies at the start or the end of $x$. At each such point at the end of $x$, insert $x^{-1}x$ into the word $w$. This only changes the resulting relations $r$ and $r'$ by type (0) moves. But we can view this procedure as modifying each of the letters in $w$, as follows. Replace each letter $a$ corresponding to an edge in $\overline{K}$ with both start and endpoint equal to the endpoint of $x$ by $x a x^{-1}$. Replace each letter $b$ corresponding to an edge in $\overline{K}$ starting at the start of $x$ and ending at the end of $x$ by $bx^{-1}$. Replace each letter $c$ starting at the endpoint of $x$ and ending at the starting point of $x$ by $xc$. In particular replace $y$ by $xy$ and replace $x$ by $xx^{-1}$. Make the corresponding replacements for $a^{-1}$, $b^{-1}$, and $c^{-1}$. The advantage of this approach is that, given only $r$, we can form $r'$ up to type (0) moves, by replacing $y$ by $x$ and making the above modifications to the remaining letters of the word. The crucial observation is that these modifications are achieved by Nielsen moves. We need at most two type (5) Nielsen moves and at most two type (6) moves for each generator other than $y$. So at most $4(n-1)$ such moves are needed in total.

An example may be helpful. Suppose that the 1-skeleton of $\overline{K}$ has 4 edges, where $x$ and $y$ form a cycle of length $2$, and with $a$ forming a loop attached to the end of $x$, and $d$ forming a loop attached at the start of $x$. Say that a 2-cell in $\overline{K}$ is attached along the word $w = xaayd$. Then $r$ becomes $aayd$ and $r'$ is $xaad$. Performing the above procedure to $r$ gives $(x a x^{-1}) (x a x^{-1}) x d$, which freely reduces to $r'$. This is achieved by the automorphism
$$y \mapsto x, \qquad a \mapsto xax^{-1}, \qquad d \mapsto d.$$
This is a composition of two type (5) Nielsen moves and two type (6) moves.

The final step that we make is to apply type (0) moves to reach the relations in $P'$. We freely reduce each relation until it freely reduced. This requires at most $2\ell$ type (0) moves (where the factor of $2$ arises from the fact that in the above procedure we may initially increase the length of the relations of $P$). Then we expand to reach the required relations in $P'$. This requires at most $\ell'$ moves.
\end{proof}

\begin{lemma}
\label{Lem:EdgeSwapForest}
Let $G$ be a finite connected graph, and let $X$ be a maximal tree. Let $Y$ be a subgraph of $G$ that is a forest consisting of at most $k$ edges. Then there is a sequence of at most $k$ edge swaps taking $X$ to a maximal tree that contains $Y$.
\end{lemma}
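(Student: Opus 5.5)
We argue by induction on the number of edges of $Y$ not already in the current maximal tree. More precisely, we prove the statement: if $X$ is a maximal tree and $Y$ is a forest such that $Y - X$ contains at most $k$ edges, then at most $k$ edge swaps take $X$ to a maximal tree containing $Y$. This implies the lemma, since $Y - X$ has at most as many edges as $Y$. When $k = 0$ we have $Y \subseteq X$ already and there is nothing to do.

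For the inductive step, pick an edge $y$ of $Y$ not in $X$. Let $p$ be the unique embedded path in $X$ joining the endpoints of $y$ (if $y$ is a loop, it cannot lie in the forest $Y$, so this case does not arise). The key claim is that $p$ contains an edge $x$ not in $Y$. Suppose instead that every edge of $p$ lies in $Y$. Then $p \cup y$ is a cycle contained in $Y$, which contradicts $Y$ being a forest. (If $y$ has both endpoints equal, $p$ is empty and $y$ itself is a cycle, again a contradiction.)

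Now perform the edge swap replacing $x$ by $y$, producing a new maximal tree $X'$. By construction $X' \supseteq (X \cap Y) \cup \{y\}$, because the only edge removed is $x$, and $x \notin Y$. Hence $Y - X'$ has exactly one fewer edge than $Y - X$. The induction hypothesis applied to $X'$ then gives at most $k-1$ further swaps, for a total of at most $k$.

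The only step needing care is choosing the removed edge $x$ on $p$ outside $Y$, so that no previously secured edge of $Y$ is lost. This is exactly where the forest hypothesis is used, and it is the main (though mild) obstacle.
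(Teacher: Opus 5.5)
Your proposal is correct and matches the paper's proof: induction on the number of edges of $Y$ not in $X$. At each step you pick $y \in Y - X$ and use the forest hypothesis to find an edge $x$ on the tree path $p$ that is not in $Y$. Swapping $x$ out for $y$ loses no edge of $Y$ already in the tree.
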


\begin{proof}
We prove this by induction on the number of edges in $Y$ but not $X$. When this number is zero, then $Y$ is a subset of $X$ and no edge swaps are required. So suppose that there is an edge $y$ in $Y$ but not $X$. Note that the endpoints of $y$ are distinct because $y$ is part of a forest.
Let $p$ be the path in $X$ joining the endpoints of $y$. Then $p \cup y$ is a cycle and so does not lie entirely in $Y$, since $Y$ is a forest. So there is an edge $x$ in $p$ lying in $X$ but not $Y$. We may perform an edge swap, removing $x$ from $X$ and adding $y$. The induction step is therefore established.
\end{proof}

\section{Triangulations of the 3-sphere}
\label{Sec:3Sphere}

A central part of our argument will be the use of triangulations of the 3-sphere. Throughout, we will use the notion of a triangulation that has become standard in low-dimensional topology. For a 3-manifold, this is an expression of the manifold as a collection of tetrahedra with their faces identified in pairs via affine homeomorphisms. Hence, it is possible for a triangulation of a 3-manifold to have a single vertex for example, whereas this is not possible when triangulations are required to arise as the topological realisation of a simplicial complex.

\subsection{Thickenable group presentations}

By definition, a group presentation $P$ is thickenable if its associated 2-complex $K$ embeds in some 3-manifold. Its regular neighbourhood $N(K)$ admits a natural handle structure,
where each $i$-cell of $K$ thickens to an $i$-handle. This regular neighbourhood $N(K)$ deformation retracts onto $K$. Hence, when $K$ is the 2-complex
arising from a balanced presentation of the trivial group, $N(K)$ is a compact contractible 3-manifold. By Perelman's solution to the 3-dimensional Poincar\'e conjecture \cite{Perelman1, Perelman2, Perelman3}, $N(K)$ is therefore a 3-ball.

\subsection{Thickenable triangular presentations}

We now give a version of Lemma \ref{Lem:MakeTriangular} for thickenable group presentations. For simplicity, we assume that the initial presentation is balanced and represents the trivial group.

\begin{lemma} 
\label{Lem:MakeTriangularThickenable}
Let $P$ be a balanced thickenable presentation of the trivial group with $n$ generators and length $\ell$. Then there is a sequence of at most $5\ell (n+1)$ moves of type 
$(0)$-$(4^+)$ taking
$P$ to a thickenable triangular presentation with at most $\ell$ generators and $\ell$ relations or to the standard one-generator presentation.
\end{lemma}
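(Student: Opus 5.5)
The plan is to rerun the proof of Lemma \ref{Lem:MakeTriangular}, with three extra concerns: each move must preserve thickenability, the final number of generators must be at most $\ell$, and relations of length at most $2$ must be treated specially.

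\textbf{Thickenability.} The basic principle is that moves of type (0)--(3) and $(4^+)$ which can be realised by embedded operations on the 2-complex keep it inside a 3-manifold.
\begin{itemize}
\item Type (1) and (3) moves, and cyclic permutations, are changes of basepoint or orientation on a 2-cell. They do not change the 2-complex $K$ at all.
\item Splitting off a prefix $ab$ of $r_i$ by a new generator $x_j$ with relation $x_j b^{-1}a^{-1}$ corresponds geometrically to the following. Inside the 2-cell for $r_i$, draw an arc cutting off a triangle whose boundary runs along $a$ and $b$. The arc becomes the new edge $x_j$ and the triangle becomes the new 2-cell. The resulting complex is a subdivision of $K$, so it is homeomorphic to $K$ and still embeds in $N(K)$.
\item Thus the sequence ``add generator and relation via $(4^+)$, premultiply, cancel twice'' yields a thickenable presentation at every stage, including the intermediate ones. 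At the intermediate stages the complex is $K$ with an extra 2-cell glued along a disc already present, and thickenability there can be checked directly.
\end{itemize}

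\textbf{Short relations.} Padding with $x_1^2x_1^{-2}$ is the problem: it produces a 2-cell that is not a subdivision. So short relations are handled differently.
\begin{itemize}
\item Since $P$ presents the trivial group, no relation is empty.
\item A length-one relation $x_i^{\pm1}$ lets us cancel a generator. Use (1) to make it $x_i$. Then cyclically permute the other relations and multiply by this relation and its inverse to delete every occurrence of $x_i$, at a cost of at most $3\ell$ moves per removed generator. Finally remove $x_i$ with a move (4). Geometrically this collapses a free edge--face pair in $N(K)$, so thickenability is preserved.
\item A length-two relation $ab$ can be rewritten as $x_i\,w^{-1}$ with $|w|=1$. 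Delete it in the same way via $(4^+)$, substituting $w$ for $x_i$; this does not increase lengths.
\item If a relation of the form $x_ix_i$ or $x_i^2$ arises, it cannot occur in a presentation of the trivial group unless other relations kill $x_i$. I expect to handle it by the same substitution argument or by avoiding it. This is where I am least sure.
\item Repeat until every relation has length at least $3$, or until we reach $\langle x_1\mid x_1\rangle$. This alternative is exactly why the standard one-generator presentation appears in the conclusion.
\item Each such deletion reduces $n$ by one and does not increase $\ell$. The cost is at most $n$ rounds of at most $O(\ell)$ moves each, giving roughly $4\ell n$ moves.
\end{itemize}

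\textbf{Counting.} Now every relation has length at least $3$. A relation of length $L\ge 3$ needs $L-3$ splittings, each costing $4$ moves, so the total is at most $4\ell$. The final number of relations is $n+\sum_i(L_i-3)\le \ell-2n\le \ell$, using $L_i\ge 3$ so that $\ell\ge 3n$. The generator count is the same, since the presentation stays balanced. Altogether the number of moves is at most $4\ell n+4\ell\le 5\ell(n+1)$.

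\textbf{Main obstacle.} I expect the main difficulty to be verifying that the intermediate presentations stay thickenable, especially during the cancellation of short relations. The geometric pictures are a collapse and a subdivision, and the risk is that the individual algebraic moves pass through non-embeddable complexes. If that fails, I would instead realise each step directly as a sequence of moves whose 2-complexes are all regular-neighbourhood-equivalent.
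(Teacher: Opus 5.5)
There is a genuine gap, and it is exactly where you flagged uncertainty. Otherwise your plan matches the paper's. First eliminate relations of length $1$ and $2$ by cancelling the corresponding $2$-handle of $N(K)$ against a $1$-handle it crosses exactly once. Each such step removes a generator, does not increase the length, and leaves a thickenable balanced presentation of the trivial group. Then subdivide the remaining $2$-cells into triangles with the moves of Lemma~\ref{Lem:MakeTriangular}, which does not change the homeomorphism type of the complex.

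The unresolved case is a length-two relation in a single generator, i.e.\ $x_i^{\pm 2}$ or $x_i^{\pm1}x_i^{\mp1}$. Your substitution argument cannot handle $x_i^{2}$: it does not express $x_i$ as a word in the other generators. Geometrically, the $2$-handle runs twice over the same $1$-handle, so there is no handle cancellation. You cannot ``avoid'' such a relation if it is already in $P$. Padding it to length $3$ is exactly what you correctly noted breaks the subdivision argument. Your stated reason is also wrong: it is not that such a relation needs other relations to kill $x_i$; it can never occur in a balanced presentation of the trivial group.

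The missing idea is the abelianisation argument the paper uses. The exponent-sum vectors of the $n$ relations form an $n\times n$ integer matrix. Its rows must generate $\mathbb{Z}^n$, since the abelianisation is trivial, so it has determinant $\pm1$ and every row is primitive. The word $x_i^{\pm2}$ gives $\pm 2e_i$ and $x_i^{\pm1}x_i^{\mp1}$ gives $0$, and neither is primitive. Hence every length-two relation is $x_i^{\pm1}x_j^{\pm1}$ with $j\neq i$. That is precisely the case your substitution handles, and it corresponds to cancelling the $2$-handle against one of two distinct $1$-handles. Each removal again yields a balanced thickenable presentation of the trivial group, so the argument applies at every round.

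Two smaller points.
\begin{enumerate}
\item The lemma only asserts thickenability of the final presentation, and your induction only needs it after each complete removal or subdivision step. So your ``main obstacle'' about intermediate stages is not needed. This matters, because your claim that a lone type (3) move does not change $K$ is false: it lengthens the attaching word by $2$. Only a (3) move followed by a cancellation, i.e.\ a cyclic permutation, is a change of basepoint.
\item ``Roughly $4\ell n$'' should be an explicit per-round bound. The paper uses at most $5\ell$ moves per removed generator, and $5\ell n + 4(\ell-3)\le 5\ell(n+1)$ still gives the stated count.
\end{enumerate}
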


\begin{proof}
Let $K$ be the 2-complex corresponding to $P$. Let $N(K)$ be its 3-manifold regular neighbourhood, which admits a natural handle structure.

Note first that no relation of $P$ has length zero, since we could remove this relation and obtain a presentation with more generators than relations, which would necessarily form an infinite group.

Suppose that some relation has length $1$. Then this corresponds to a 2-handle in $N(K)$ that runs over a single 1-handle. We can collapse these two handles. Say that $x_i$ is the generator corresponding to the collapsed 1-handle. Then this has the effect of removing each occurrence of $x_i$ or its inverse in any other relation. This is achieved by at most $5\ell$ stable AC moves: gradually cyclically permute each relation by using a sequence of conjugations and cancellations, and when an $x_i$ or its inverse appears at the front, remove it using moves (3) and (0), possibly preceded by move (1). Then remove the generator and relation $x_i$ or $x_i^{-1}$. We have used at most $5\ell$ stable AC moves, and we have removed a generator.

Suppose now that some relation has length $2$. This corresponds to a 2-handle that runs over two 1-handles, or possibly the same 1-handle twice. However, it cannot in fact run over the same 1-handle twice, since the relation would then spell the word $x_i^2$, $x_i^{-2}$, $x_i x_i^{-1}$ or $x_i^{-1} x_i$, but none of these is a primitive element of the free abelian group generated by $x_1, \dots, x_n$, whereas each relation must be primitive in this group. Thus, we deduce that the relation spells the word $x_i^{\pm 1} x_j^{\pm 1}$, for $j \not = i$. As above, we can replace each occurrence of $x_i$ or $x_i^{-1}$ in the relations by $x_j$ or $x_{j}^{-1}$, and we can then remove this relation and $x_i$ by move ($4^+$) possibly preceded by (1), (3) and (0). At the level of the thickened 2-complex, this is achieved by crushing the 2-handle and the two incident 1-handles to a single 1-handle. In particular, the presentation remains thickenable. Again this uses at most $5\ell$ moves of type $(0)$-$(4^+)$, and again we have removed a generator.

Once the above modifications have been completed, we have used at most $5\ell n$ moves, and the presentation length is still at most $\ell$. But now each relation has length at least $3$.

We now perform the moves described in the proof of Lemma \ref{Lem:MakeTriangular}. These subdivide each 2-cell with length more than 3 into triangles. This subdivision does not change the topological type of the resulting 2-complex. Hence, it remains thickenable. This uses at most $4(\ell-3)$ moves. In total, the number of moves is at most $5 \ell n + 4(\ell-3) \leq 5 \ell (n+1)$.
\end{proof}

\subsection{A 3-sphere triangulation}
\label{Subsec:3SphereTri}
We now describe a procedure that takes a balanced thickenable triangular presentation $P$ of the trivial group and creates a triangulation of the 3-sphere. Let $K$ be the presentation 2-complex for $P$. Since $K$ is thickenable, it lies inside a regular neighbourhood $N(K)$, which is a 3-ball with a natural handle structure. The boundary of $N(K)$ is a 2-sphere. Attach onto this 2-sphere a 3-ball $B$, via a homeomorphism from $\partial B$ to $\partial N(K)$. Now collapse $N(K)$ back to $K$, forming a cell structure for the 3-sphere. The boundary of the 3-cell $B$ is triangulated. We will show below that a vertex $v$ in this 2-sphere can be found so that:
\begin{enumerate} 
\item no edges in $\partial B$ start and end at $v$;
\item when the star of $v$ is removed from $\partial B$, the result is a triangulation of a closed disc $D$. 
\end{enumerate}
Then $B$ can be triangulated by forming the cone over $D$ with cone point $v$. (See Figure \ref{Fig:ConeOverDisc}.) This provides the required 3-sphere triangulation.

\begin{figure}[h]
\centering
\includegraphics[width=0.5\textwidth]{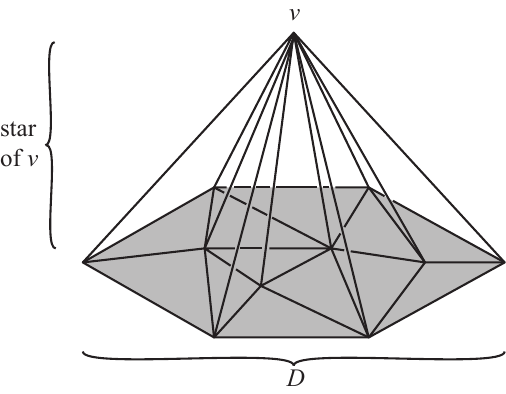}
\caption{The triangulation of the ball $B$}
\label{Fig:ConeOverDisc}
\end{figure}

We note that, when (0) holds, the only obstruction to (2) above is the existence of a cycle of length $2$ in the 1-skeleton of $\partial B$ containing $v$. Thus, a suitable vertex $v$ can be chosen using the following lemma.

\begin{lemma}
In any triangulation of the 2-sphere, there is a vertex that misses all cycles in the 1-skeleton with length $1$ or $2$.
\end{lemma}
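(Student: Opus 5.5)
We argue by an extremal (innermost disc) argument using the Jordan curve theorem on $S^2$. Here a triangulation is in the generalised sense, so loops and multiple edges may occur. Call a cycle of length $1$ or $2$ in the 1-skeleton a \emph{short cycle}. Each short cycle is an embedded closed curve in $S^2$, since a length-2 cycle has two distinct vertices and two distinct edges. If there are no short cycles, any vertex will do. Otherwise each short cycle $C$ separates $S^2$ into two open discs. Among all pairs $(C, D)$, with $C$ a short cycle and $D$ a complementary disc of $C$, we choose one that minimises the number of triangles contained in $D$. We then aim to find a vertex in the interior of $D$ and show that it lies on no short cycle.

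First, the interior of $D$ contains at least one vertex. Suppose not. Then $\overline{D}$ is triangulated using only the at most two vertices of $C$. Every edge inside $D$ then joins vertices of $C$ and, together with part of $C$, forms a short cycle. A triangle in $D$ adjacent to $C$ has a side that is an interior edge $e$, and $e$ with part of $C$ bounds a disc strictly inside $D$ containing fewer triangles, contradicting minimality.

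We should check this more carefully. If $C$ is a loop at $u$, every triangle inside $D$ has all three corners at $u$, so all its edges are loops. A triangle with all its edges loops at one vertex is impossible on $S^2$ unless the whole sphere is built that way, and that configuration contradicts minimality or can be handled directly. If $C$ has length 2, a triangle in $D$ adjacent to an edge of $C$ has its third side an edge between the two vertices of $C$ or a loop. Either way this side cuts off a strictly smaller disc, unless $D$ consists of a single triangle. But a single triangle cannot have just two sides on a 2-cycle, since a triangle has three sides. The small cases need explicit handling, and I expect this degenerate-case bookkeeping to be the main obstacle.

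Next, let $v$ be a vertex in the interior of $D$. Suppose $v$ lies on a short cycle $C'$. Since $C'$ passes through $v$, it meets $D$. Its vertices are $v$ and at most one other vertex $w$, where $w$ lies either in the interior of $D$ or on $C$. The curve $C'$ cannot cross $C$ transversally except at vertices, because edges meet only at vertices. Hence $C'$ lies in $\overline{D}$ and meets $C$ in at most the single vertex $w$. Then one complementary disc of $C'$ lies inside $D$ and misses some triangle of $D$, namely a triangle on the other side of $C'$ adjacent to $C$. So it contains strictly fewer triangles than $D$, contradicting minimality. This gives the required vertex $v$.
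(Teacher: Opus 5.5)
Your proof is correct, but it is organised differently from the paper's. The paper argues in two stages. It first notes that the loops form a union of wedges of circles, and picks a complementary region $F$ that is a disc, so no loop meets the interior of $F$. It then observes that no $2$-cycle meets both the interior of $F$ and its complement, takes an innermost $2$-cycle in $F$, and lets $v$ be any vertex inside the disc it bounds. You instead make a single extremal choice over loops and $2$-cycles together, making ``innermost'' precise by counting triangles in a complementary disc. This has two consequences. One Jordan-curve argument (your final paragraph) then disposes of both kinds of cycle through $v$, so you never need the wedge-of-circles picture. It also obliges you to prove that the minimal disc actually contains a vertex, which the paper takes for granted.

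Your second paragraph in that step is unnecessary, and its claim that a triangle whose three sides are loops at one vertex cannot occur on $S^2$ is false. A counterexample is the outer face of a three-petalled flower with each petal coned off to a new vertex. Your first paragraph already covers every case, once one makes explicit that each edge of $C$ has exactly one incident triangle-side in $D$. Since $C$ has at most two edges, a triangle $\tau \subset D$ adjacent to $C$ then has a side $e$ whose interior lies in $D$. If $e$ is a loop, the complementary disc of $e$ that is disjoint from $C$ lies in $D$ and misses $\tau$. If $e$ joins the two vertices of $C$, it cuts $D$ into two discs, each bounded by a $2$-cycle and each containing fewer triangles than $D$. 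So no separate degenerate-case bookkeeping is needed.
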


\begin{proof}
A cycle of length 1 is a simple closed curve, and any two such simple closed curves are either disjoint or intersect at their common vertex. Hence, the union of the cycles of length 1 is a collection of wedges of circles. These divide the 2-sphere into planar surfaces, at least one of which must be a disc. We focus on this disc $F$. Then the interior of the disc contains no cycles of length 1. When there are no cycles of length 1, we let $F$ denote the original 2-sphere.

The cycles of length 2 in the 2-sphere cannot intersect both the interior of $F$ and the complement of $F$. Hence, we will focus on the cycles of length 2 within $F$. Our aim is to find a vertex $v$ in the interior $F$ that avoids these cycles. Note that if $F$ contains no cycle of length $2$, then we may let $v$ be any vertex in the interior of $F$, of which there must be at least one. So suppose now that $F$ does contain a cycle of length $2$. Consider a cycle of length $2$ in $F$ that is innermost, in the sense that it bounds a disc in $F$ with the property that there is no other cycle of length $2$ in the disc. The boundary of the disc has two vertices $v_1$ and $v_2$ and two edges. Other than these two edges, there are no other edges joining $v_1$ and $v_2$ in the disc. So, no cycle of length 2 in $F$ intersects the interior of this disc. Now let $v$ be any vertex in the interior of this disc.
\end{proof}

\subsection{Pachner moves}

Any two triangulations of the same closed piecewise-linear manifold differ by a sequence of \emph{Pachner moves} \cite{Pachner}. In the case of triangulations of a 3-manifold, these are called 1-4 and 2-3 moves, and their inverses. These are shown in Figure \ref{Fig:PachnerMoves}. In a 1-4 move, the interior of a tetrahedron $\Delta$ is removed and the triangulated cone over $\partial \Delta$ is inserted. A 4-1 move is the reverse of this procedure. In a 2-3 move, two distinct tetrahedra that share a common face are considered. The union of the interiors of these tetrahedra and the interior of the face is an open 3-ball. The contents of this open 3-ball is removed, and replaced by three new tetrahedra arranged around a common edge.
A 3-2 move reverses this.

\begin{figure}[h]
\centering
\includegraphics[width=0.9\textwidth]{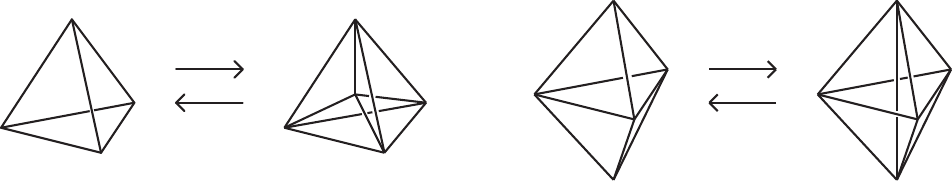}
\caption{The Pachner moves for closed 3-manifolds}
\label{Fig:PachnerMoves}
\end{figure}

The following key result of Mijatovic \cite{Mijatovic} will be central to our arguments. See also King's paper \cite{King}, which contains a similar
result using `bistellar moves' rather than Pachner moves.

\begin{theorem}
\label{Thm:PachnerS3}
Any triangulation of the 3-sphere with $t$ tetrahedra may be converted to the standard triangulation using at most
$$ a t^2 2^{bt^2}$$
Pachner moves, where $a \leq 6 \cdot 10^6$ and $b \leq 5 \cdot 10^4$.
\end{theorem}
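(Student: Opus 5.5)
This is Mijatovi\'c's theorem \cite{Mijatovic}, which we quote rather than reprove. If we had to prove it, we would follow the Rubinstein--Thompson approach to $3$-sphere recognition, keeping track at each stage of how many Pachner moves are used. Let $T$ be a triangulation of $S^3$ with $t$ tetrahedra. After $O(t)$ Pachner moves we may assume $T$ is a genuine simplicial triangulation, for instance by doing 1-4 moves and then a bounded number of moves in each tetrahedron to make it a barycentric-type subdivision. Next, pick a maximal family $\mathcal S$ of pairwise disjoint, pairwise non-parallel normal $2$-spheres in $T$. By Kneser--Haken finiteness, $|\mathcal S| = O(t)$. We may take each sphere to be a fundamental surface, or built from fundamental surfaces by a controlled process. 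The Hass--Lagarias--Pippenger bound on fundamental normal surfaces then bounds the total number of normal discs in each sphere by $2^{O(t^2)}$. This is where the factor $2^{bt^2}$ comes from.

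Cutting along $\mathcal S$ splits $S^3$ into pieces. Each piece is a punctured $3$-ball, and its only normal $2$-spheres are parallel to its boundary components or to vertex links. Thompson's theorem applied to such a piece gives an almost normal $2$-sphere, again of weight $2^{O(t^2)}$. This sphere separates the piece into regions that are products, or are balls, in which the normal isotopy (the ``sweepout'') flows it to the boundary spheres or to a vertex link.

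The main step converts this topological picture into Pachner moves. We would first show that a normal or almost normal surface of weight $w$ can be made a subcomplex of a subdivision of $T$ using $O(w)$ Pachner moves per tetrahedron. To do this, insert its discs one at a time by stellar subdivisions, each realised by boundedly many 1-4 and 2-3 moves. The total for all surfaces is then $O(t^2 \, 2^{O(t^2)})$ moves. Within each product region $S^2\times I$ cut out by these surfaces, we would show that the triangulation can be collapsed to the cone on its boundary sphere, which is a standard triangulated ball. We would do this by shelling across the normal isotopy, one normal disc at a time, with a bounded number of moves per disc. This uses that a triangulated ball lying inside $S^3$ and equipped with a shelling order can be converted to a cone by Pachner moves linear in its size. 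Finally, the resulting triangulation is glued from cones over spheres; we amalgamate them by 3-2 and 4-1 moves into the standard two-tetrahedron triangulation, using a number of moves linear in the current size.

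We expect the main obstacle to be this last, moves-counting step: showing that the sweepout between an almost normal sphere and the adjacent normal spheres (or vertex links) can really be realised by Pachner moves polynomial in the weight. Our first attempt would be to track the isotopy through the $2^{O(t^2)}$ normal pieces. Each elementary compression across an edge of $T$ would be encoded as a local retriangulation of bounded size. Summing over all pieces and the $O(t)$ regions then gives a bound of the form $a t^2 2^{bt^2}$. The explicit constants $a,b$ would come from the explicit constants in the normal-surface weight bound.
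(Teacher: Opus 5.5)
This is exactly how the paper handles the statement: it gives no proof of its own and simply quotes Mijatovi\'c's theorem (noting King's analogous result for bistellar moves), so your citation is the same approach. The Rubinstein--Thompson sketch you add is not load-bearing and should not be read as a proof — for example, a product region $S^2 \times I$ is not a ball, so it cannot be reduced to the cone on a single boundary sphere — but since you explicitly rely on the citation, this does not affect the proposal.
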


Here, the \emph{standard triangulation} of the 3-sphere is obtained from two tetrahedra by identifying their boundaries homeomorphically.

\subsection{A choice of two spanning trees}
\label{Subsec:TwoTrees}
While Theorem \ref{Thm:PachnerS3} is a striking result, it is not immediately clear how it can be used to create a sequence of stable Andrews-Curtis moves. This is because stable Andrews-Curtis moves do not change the Euler characteristic of the relevant 2-complexes, whereas Pachner moves do change the Euler characteristic of a triangulation's 2-skeleton. To deal with this, we introduce auxiliary data, as follows.

\begin{definition}
Given a triangulation of the 3-sphere, its \emph{dual 1-skeleton} is a graph with a vertex corresponding to each tetrahedron and with an edge corresponding to each face, where the edge joins the vertices dual to the two tetrahedra incident to the face.
\end{definition}

\begin{definition}
A \emph{3-sphere triangulation with auxiliary trees} is a triple $(T, X, X')$, where $T$ is a triangulation of the 3-sphere, $X$ is a maximal tree in its 1-skeleton and $X'$ is a maximal tree in its dual 1-skeleton. Furthermore, a choice of orientations and basepoints is made for the 2-skeleton of $T$. However, for simplicity of notation, we suppress this orientation and basepoint information from our terminology. The \emph{associated 2-complex} $K(T,X,X')$ is the 2-complex obtained by removing the interior of all tetrahedra of $T$, the interior of every face dual to an edge in $X'$, and then collapsing $X$ to a single 0-cell. The orientations and basepoints on the 2-skeleton descend to orientations and basepoints for $K(T,X,X')$. The \emph{associated group presentation} $P(T,X,X')$ is the presentation associated with $K(T,X,X')$.
\end{definition}

Note that the interiors of the 3-simplices of $T$ and the interiors of the 2-simplices dual to the edges of $X'$ have union equal to an open 3-ball. Hence, $K(T,X,X')$ is the 2-skeleton for a cell structure of the 3-sphere with a single 3-cell. In particular, its fundamental group is trivial, and it has Euler characteristic is 1. Thus $P(T,X,X')$ is a balanced presentation of the trivial group.

\subsection{Edge swaps on the dual tree}
In Section \ref{Subsec:2Complexes}, we considered how to change a spanning tree in the 1-skeleton of a 2-complex, by means of an edge swap. In Lemma \ref{Lem:EdgeSwap}, we saw the effect of edge swaps on the associated group presentations. In this subsection, we analyse the effect of an edge swap on a dual tree.

\begin{lemma}
\label{Lem:EdgeSwapDualTree}
Let $(T,X,X')$ be a 3-sphere triangulation with auxiliary trees, where $t$ is the number of tetrahedra. Suppose $X'_2$ is a maximal tree in the dual 1-skeleton that is obtained from $X'$ by an edge swap. Then $P(T,X,X')$ and $P(T,X,X'_2)$ differ by a sequence of at most $4t^2 + 12t - 16$ Andrews-Curtis moves.
\end{lemma}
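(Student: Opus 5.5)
The edge swap on the dual tree replaces a dual edge $e'$ by a dual edge $f'$, where $e'$ lies on the unique path in $X'$ joining the endpoints of $f'$. Let $e$ and $f$ be the faces of $T$ dual to $e'$ and $f'$. In $K(T,X,X')$ the face $f$ is present and $e$ is absent. In $K(T,X,X'_2)$ it is the other way round. All other faces and all edges agree, and the maximal tree $X$ is unchanged. So the two presentations have the same generators and share all relations except one: $r_f$ is replaced by $r_e$. My plan is to express $r_e$, up to conjugation and free reduction, as a product of $r_f$ and conjugates of the other shared relations. This product should come from the geometry of the 3-ball obtained by cutting along the faces not dual to $X'$. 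I will then realise that product by moves (0)--(3).

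\textbf{Locating the relation.} Let $p = e'_1, \dots, e'_s$ be the path in $X'$ from one endpoint of $f'$ to the other, so $e'$ is some $e'_i$ and $s \le t-1$. The tetrahedra $\Delta_0, \dots, \Delta_s$ along $p$, glued along the faces dual to the $e'_j$, form a 3-ball $B_p$. The union of $B_p$ with the face $f$ is a solid torus. The faces dual to the $e'_j$ are meridian discs, and $f$ closes the core loop. The boundary of the union of $\Delta_0,\dots,\Delta_{i-1}$ (glued along $e'_1,\dots,e'_{i-1}$) is a 2-sphere. It consists of $e$ together with the other boundary faces of these tetrahedra, and this sphere bounds a 3-ball, which gives an identity among the relations. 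Equivalently, $\partial \Delta_0 \cup \dots \cup \partial\Delta_{i-1}$ shows that in the free group $r_e$ equals a product of conjugates of the relations of faces of those tetrahedra. The faces $e'_1,\dots,e'_{i-1}$ cancel in pairs, leaving $r_e$ as a product of conjugates of at most $2i+2$ other face relations. The trouble is that some of those other faces may themselves be dual to edges of $X'$, so they are absent from the complex.

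\textbf{Using the whole ball.} To fix this, I will use the full cell structure rather than one tetrahedron. Deleting all tetrahedra and all faces dual to $X'$ leaves a single 3-cell, namely the ball $\mathcal{B}$ obtained by gluing all $t$ tetrahedra along the faces of $X'$. Its boundary sphere is tiled by the faces not dual to $X'$, each appearing twice, and $f$ appears on it as two copies $f_+$ and $f_-$. Inside $\mathcal{B}$ the face $e$ is a properly embedded disc separating $f_+$ from $f_-$. This separation holds because $e'$ lies on the path from $f_+$ to $f_-$ in the tree $X'$. So $\partial\mathcal B$ is split by $\partial e$ into two discs, each a union of $O(t)$ triangles, one of which contains $f_+$. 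Reading the boundary of that disc gives $r_e$, up to cyclic permutation and inversion, as a product of conjugates of $r_f$ and at most $2t$ other relations, all of which are faces present in both complexes. The conjugating words have length $O(t)$, because the faces in a disc made of at most $4t$ triangles, each with three edges, can be reached from the basepoint in a bounded number of steps.

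\textbf{Counting the moves.} Starting from $r_f$, I conjugate it by type (3) moves to bring its basepoint into position. I then multiply it in turn by each remaining relation of the disc using type (2) moves, each preceded by $O(t)$ conjugations and inversions applied to that relation and then undone afterwards. This is the same bookkeeping as in the proof of Proposition~\ref{Lem:Move4+}. Finally, type (0) moves freely reduce the result to $r_e$. Since there are at most $4t$ triangles and each contributes $O(t)$ moves, the total is quadratic in $t$, consistent with $4t^2+12t-16$. Matching the exact constant needs careful counting: the disc has at most $2t-2$ triangles besides $f_+$, each costs about $2t$ conjugations there and back, plus linearly many free reductions.

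\textbf{Main obstacle.} The step I expect to be hardest is making precise that the relation read from one side of the disc $e$ in $\partial\mathcal B$ is a product of conjugates in which \emph{only} faces present in both complexes appear, and which includes $r_f$ exactly once. That $r_f$ appears exactly once is what lets the type (2) moves modify $r_f$ itself rather than some other relation. The point to check is that the second copy $f_-$ lies on the other side of $\partial e$, which is exactly where the separation argument above enters. A secondary difficulty is bounding the conjugator lengths by about $t$ via the dual-graph distance within the disc.
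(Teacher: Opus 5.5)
Your geometric argument is the paper's. The paper also glues all tetrahedra along the faces dual to $X'$ into a ball and cuts it along the face dual to the removed dual edge (your $e$). It notes that each side carries exactly one copy of the face dual to the added edge (your $f$), because $e'$ lies on the tree path between the endpoints of $f'$. It then sweeps the attaching circle of the changing 2-cell across the (possibly pinched) annulus between the two faces, one triangle at a time. Your reading of the boundary of the disc on one side of $\partial e$ as a product of conjugates is the same thing in van Kampen language. Your count of at most $2t-2$ triangles besides $f_+$ is also right, for either side. Cutting $\mathcal B$ along $e$ gives two stacked balls with $k_1+k_2=t$ tetrahedra, so the side containing $f_+$ carries $2k_1+1\le 2t-1$ triangles.

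What is missing is the explicit bound. It is part of the statement and is used as is in Proposition~\ref{Prop:PachnerMoveChangePres}. You only show the count is quadratic, and the bookkeeping you propose does not give $4t^2+12t-16$. In the semigroup setting, conjugating a triangle relation by a word of length $L$ and undoing it costs $L$ type (3) moves out, then $L$ type (3) moves plus $2L$ cancellations back, before the accumulated product is freely reduced. With conjugators of length up to about $t$, your own estimate ($\approx 2t$ conjugations per triangle over $2t-2$ triangles, plus linearly many cancellations each) has leading term above $4t^2$.

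The paper gets the stated constant by never conjugating triangle relations by long words; instead the relation being changed is the thing that moves. Each slide lengthens it by at most one, so it has length at most $2t+1$ throughout. Before each slide it is cyclically permuted by type (3) and (0) moves, which costs at most $2(t+1)$ moves, so that the edge to be slid is at its front. The triangle relation is then cyclically permuted by at most one letter and possibly inverted, multiplied in by a type (2) move, and at most two cancellations follow. That is at most $2t+8$ moves per triangle, hence $(2t-2)(2t+8)=4t^2+12t-16$ in total. With this change of bookkeeping your argument proves the lemma as stated.
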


\begin{proof}
Let $e$ be the edge in $X'$ that is removed in the swap, and let $e_2$ be the added edge. Dual to $e$ and $e_2$ are faces $f$ and $f_2$ of the triangulation $T$. The tree $X'$ specifies a collection of tetrahedra with some of their faces glued in pairs. Let $B$ be the resulting triangulated 3-manifold. This deformation retracts onto $X'$, and hence is a 3-ball. The boundary of $B$ has two copies of $f_2$ in its boundary. Furthermore, $f$ is properly embedded in $B$. Cutting $B$ along $f$ gives two 3-balls $B_1$ and $B_2$. Each contains a copy of $f_2$, because $e$ lies in the path in $X'$ joining the endpoints of $e_2$. The total number of triangles in $\partial B$ is at most $4t-2$, since each face of $T$, other than $f$, appears at most twice in it. So, one of $B_1$ or $B_2$ has at most $2t$ triangles in its boundary, say $B_1$. When $f$ and $f_2$ are removed from $\partial B_1$, the result is an annulus (in the case where $\partial f$ and $\partial f_2$ are disjoint in $\partial B_1$) or obtained from an annulus by collapsing some properly embedded vertical arcs. So, we may freely homotope $\partial f$ across this (possibly collapsed) annulus, taking it to $\partial f_2$. This homotopy may be achieved in a series of steps, where at each step, we homotope across a single triangle. (See Figure \ref{Fig:SlideRelationBall}.) We view $f$ as a 2-cell attached onto the remainder of $K(T,X,X')$. This homotopy represents a modification to the attaching map of this 2-cell. After the homotopy has been completed, the resulting 2-complex is  $K(T,X,X'_2)$.

We now analyse the effect on the attaching word of the 2-cell from each step of the homotopy. At each step, we slide an edge of the 2-cell across a triangle. By applying the conjugation moves (3) and cancellation moves (0), we may change the basepoint of the 2-cell, so that the first edge it runs over is the relevant edge. The triangle specifies a relation of the presentation, and we can premultiply by a conjugate of this relation or its inverse and perform at most two cancellations to achieve the slide. We may first need to cyclically permute the relation by a generator or its inverse so that the edge that we slide across is the final letter of the relation. Each step increases the length of the attaching word by at most $1$. So after $2t-2$ such steps, the attaching word has length at most $2t+1$. Therefore, the number of Andrews-Curtis moves used in each step is at most $2t+8$ (where at most $2(t+1)$ of these arise from a cyclic permutation of the attaching word of the 2-cell). So, the total number of Andrews-Curtis moves is at most $(2t - 2)(2t+8) = 4t^2 + 12t - 16$.
\end{proof}

\begin{figure}[h]
\centering
\includegraphics[width=0.5\textwidth]{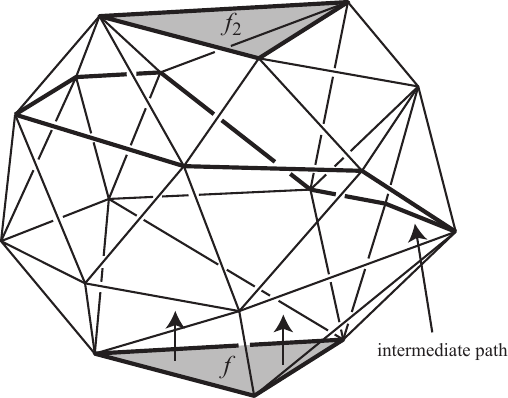}
\caption{The 3-ball $B_1$ is shown with $f$ and $f_2$ in its boundary. A homotopy is performed, sliding the 2-cell across the annulus $\partial B_1 \cut (f \cup f_2)$. A potential path in the middle of this homotopy is shown.}
\label{Fig:SlideRelationBall}
\end{figure}

\section{The bound on stable Andrews-Curtis moves}
\label{Sec:BoundSAC}
In this section, we prove Theorem \ref{Thm:ThickenableBound}. As explained in Section \ref{Subsec:RemoveMoves}, this result, combined with Proposition \ref{prop:Remove4Plus56}, immediately implies Theorem \ref{Thm:StableACBound}.

We are given a balanced thickenable presentation $P$ of the trivial group with length $\ell$. We may assume that its number of generators is at most $\ell-1$, as otherwise there is a sequence of at most $2(\ell-1)$ moves of type (4) and possibly $(1)$ taking it to the standard one-generator presentation. This presentation $P$ specifies a 2-complex $K$ with a single 0-cell. Furthermore, each 1-cell of $K$ is oriented, and the boundary of each 2-cell is based and oriented.

Using Lemma \ref{Lem:MakeTriangularThickenable}, we can convert this to a balanced thickenable triangular presentation $P'$ or to the standard one-generator presentation. This uses at most $5\ell^2$ moves (0)-($4^+$) and the result $P'$ has at most $\ell$ relations. This corresponds to a triangulated 2-complex $K'$ with a single 0-cell. We can attach on a triangulated 3-ball $B$ to $K'$ to form a 3-sphere triangulation $T$, as in Section \ref{Subsec:3SphereTri}. The triangulation of $B$ is obtained by picking a vertex $v$ in $\partial B$ that does not lie on a cycle of length 1 or 2 in the 1-skeleton of $\partial B$, letting $D$ be the complement of the star of $v$ in $\partial B$, and then forming the cone over $D$ with cone point $v$.  The number of tetrahedra is at most twice the number of relations of $P'$, hence at most $2\ell$.

We now bound the number of vertices in the interior of this triangulated disc $D$. Let $V$, $E$ and $F$ denote the number of vertices, edges and faces of the sphere $\partial B$. Then $V - E + F = 2$. Also, $3F = 2E$, and so $V = (F/2) + 2$. The number of faces $F$ is twice the number of relations, so at most $2\ell$. Hence, $V$ is at most $\ell + 2$. The vertex $v$ does not lie in $D$, and also there is at least one vertex in $\partial D$. So the number of vertices in the interior of $D$ is at most $\ell$.

The cell complex $K'$ associated with $P'$ has a single 0-cell. Furthermore, no new 0-cells are introduced when $B$ is attached. Hence, $T$ also has a single 0-cell. So its maximal tree $X$ is just this 0-cell. The 3-ball $B$ has a dual 1-complex. Let $X'$ be some maximal tree in this dual 1-complex. Thus, $(T, X, X')$ forms a 3-sphere triangulation with auxiliary trees. As described in Section \ref{Subsec:TwoTrees}, this specifies a balanced presentation $P(T, X, X')$.

\begin{lemma}
\label{Lem:FromPToPTXX}
There is a sequence of at most $\ell$ moves of type $(4^+)$ taking $P'$ to $P(T,X,X')$. 
\end{lemma}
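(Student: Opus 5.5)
The plan is to compare the two 2-complexes $K'$ and $K(T,X,X')$ directly. Recall that $K(T,X,X')$ is obtained from the 2-skeleton of $T$ by deleting the interiors of all tetrahedra and the interiors of the faces dual to edges of $X'$, and then collapsing $X$, which here is the single 0-cell. Since $X'$ is a maximal tree in the dual 1-complex of $B$, every face in the interior of $B$ is either dual to an edge of $X'$ and hence deleted, or dual to an edge of the dual graph of $B$ not in $X'$. The faces of $\partial B$ are the triangles of $K'$, and these all survive. So $K(T,X,X')$ is $K'$ together with the edges in the interior of $B$ and those interior triangles of $B$ dual to non-tree edges.

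First I would identify these new cells. The interior edges of $B$ are exactly the cone edges from $v$ to the interior vertices of $D$; edges from $v$ to $\partial D$ already lie in $\partial B$. By the vertex count above, there are at most $\ell$ of them. The balance of $P(T,X,X')$ (Euler characteristic 1) together with the balance of $P'$ then shows that the number of surviving interior triangles equals the number of these new edges. Concretely, $B$ has $F_D$ tetrahedra, where $F_D$ is the number of triangles of $D$, and so $X'$ has $F_D-1$ edges. The interior faces of $B$ are cones over the interior edges of $D$, of which there are $E_D - |\partial D|$. Using $\chi(D)=1$, one checks that $E_D-|\partial D|-(F_D-1)$ equals the number of interior vertices of $D$.

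Next, I would build the new cells one at a time, each with a single $(4^+)$ move. Give each new edge $e_u$, running from $v$ to an interior vertex $u$ of $D$, the generator $x_u$. The goal is to order the interior vertices $u_1,\dots,u_k$ and match each to a surviving interior triangle $\tau_i$ such that $\tau_i$ has boundary $e_{u_i}$, a second edge $e_{u_j}$ with $j<i$ or an edge of $\partial B$, and an edge of $D$. Then the relation of $\tau_i$ reads $x_{u_i} w^{-1}$ with $w$ of length 2 in the earlier generators, after choosing the basepoint and orientation (up to orientation conventions, handled by (1) and the cyclic ordering implicit in the choice of basepoints). That is precisely a $(4^+)$ move, since $x_{u_i}$ appears in no other current relation. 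This gives at most $k\le\ell$ moves.

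The main obstacle is showing that such an ordering and matching exist. I would get it by shelling. The surviving interior faces correspond to the complementary edges of the spanning tree $X'$ in the dual graph of $B$, which is the dual graph of the disc $D$ with interior edges of $D$ giving dual edges. Equivalently, the surviving faces are cones over the edges of $D$ not crossed by $X'$. These edges of $D$, together with $\partial D$, form a connected graph in $D$ that is a spanning tree relative to $\partial D$: it contains every interior vertex, has no cycles beyond $\partial D$ (each cycle would disconnect the dual tree), and is connected to $\partial D$. I would then pick $u_1,\dots,u_k$ in order of increasing distance from $\partial D$ in this forest, and let $\tau_i$ be the cone on the forest edge joining $u_i$ to its parent. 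The parent is either an earlier $u_j$ or on $\partial D$, giving the required triangle. The remaining care is the orientation and basepoint bookkeeping, where one may need to choose the orientations and basepoints for the new cells appropriately, and these choices are free.
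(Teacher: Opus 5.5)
Your proposal is correct and follows essentially the same route as the paper. Both arguments identify the new cells of $K(T,X,X')$ as the cone from $v$ over the forest of interior edges of $D$ not crossed by the dual tree, each component of which meets $\partial D$ in exactly one vertex. Both then add these cells one interior vertex at a time, each addition being a single $(4^+)$ move; your ordering by increasing distance from $\partial D$ is just the reverse of the paper's leaf-by-leaf collapse. Your Euler characteristic count (surviving interior triangles $=$ interior vertices of $D$) also supplies the connectedness that the paper simply asserts.
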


\begin{proof}
The 3-ball $B$ is a cone over $D$. The triangulation of $D$ has a dual graph and this contains a maximal tree $Y'$ that is obtained from $X'$ as follows. A vertex of $X'$ corresponds to a tetrahedron of $B$, which intersects $D$ in a triangle, and the corresponding vertex of $Y'$ arises from this triangle. Similarly, each edge of $X'$ corresponds to a face of $B$, and this intersects $D$ in a single edge, and the corresponding edge of $Y'$ is dual to this edge. When the interior of the faces of $D$ are removed and the edges corresponding to the edges of $Y'$ and the edges in $\partial D$ are removed from $D$, the result is a forest $F$. Let $C$ be the cone over this forest with cone point $v$. Then $K(T,X,X')$ is obtained from $K'$ by attaching $C$.
Each component of this forest $F$ intersects $\partial D$ in a single vertex. Hence, unless $F$ consists solely of vertices in $\partial D$, it has a leaf $w$ in the interior of $D$. This is attached to a single edge $e$ of $F$. The cone over $e$ is a triangle. We can remove this leaf $w$ and edge $e$ from $F$. The effect on the cone is to collapse the triangle via the edge joining $w$ to $v$. The reverse of this move is to add a new generator corresponding to this edge, as well as a relation that expresses this generator as a word of the length at most two in the previous generators. In other words, this is a type $(4^+)$ move. This is repeated for each vertex in the interior of $D$, of which there are at most $\ell$. 
\end{proof}

Note that the length of the presentation $P(T,X,X')$ is at most 3 times the number of faces of $T$, which is at most $6t$, and hence at most $12 \ell$.

\begin{proposition}
\label{Prop:PachnerMoveChangePres}
Suppose $(T, X, X')$ is a 3-sphere triangulation with auxiliary trees, and with $t$ tetrahedra. Let $T_2$ be a triangulation obtained from $T$ by a Pachner move. Then $T_2$ has auxiliary trees $X_2$ and $X_2'$ so that $P(T,X,X')$ and $P(T_2, X_2, X_2')$ differ by at most $12t^2+108t -13$ moves of type $(0)$-$(6)$. Moreover when each such move is applied, the length of the presentation is at most $6t+4$.
\end{proposition}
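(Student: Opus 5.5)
The plan is to treat the four Pachner moves (1-4, 4-1, 2-3, 3-2) separately. In each case I first move the auxiliary trees into a position adapted to the move. When we pass from $T$ to $T_2$, the associated 2-complex should then change only by a bounded number of elementary collapses or expansions. Each of these is a $(4^+)$ move, possibly together with moves $(0)$, $(1)$, $(3)$, $(6)$ to fix orientations and basepoints.

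The repositioning uses two tools. The dual tree is changed by edge swaps, each costing at most $4t^2+12t-16$ Andrews-Curtis moves by Lemma \ref{Lem:EdgeSwapDualTree}. The primal tree $X$ is changed by edge swaps using Lemmas \ref{Lem:EdgeSwapForest} and \ref{Lem:EdgeSwap}. Each primal swap costs $O(n)$ Nielsen moves plus $O(\ell)$ moves of type $(0)$. Here $n$ and $\ell$ are $O(t)$, since the presentation has at most $2t$ relations of length at most $3$ (after the dual swaps they have length at most $2t+1$). The leading term $12t^2$ in the claimed bound indicates that at most three dual edge swaps are ever needed, with all other costs linear in $t$.

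\emph{1-4 move} on a tetrahedron $\Delta$ with new interior vertex $w$. No preparation is needed. Let $X_2$ be $X$ together with one new edge $wa$, where $a$ is a vertex of $\Delta$. The six new interior faces are cones from $w$ over the six edges of $\Delta$, and the four new tetrahedra correspond to the faces of $\Delta$.
\begin{itemize}
\item Choose the three edges of $\Delta$ whose cone-faces are kept to form a spanning path of $\Delta^{(1)}$. The complementary three edges also form a path, and the dual edges of their cone-faces form a spanning tree on the four new tetrahedra.
\item Reattach each dual edge of $X'$ at $\Delta$ to the new tetrahedron containing the corresponding face of $\Delta$. This gives $X_2'$.
\item Then $K(T_2,X_2,X_2')$ is $K(T,X,X')$ with three new generators (the edges $wb$, $b\neq a$) and three triangles. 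Each triangle expresses one new generator as a word of length $2$ in earlier generators when processed in leaf order along the path, exactly as in Lemma \ref{Lem:FromPToPTXX}.
\end{itemize}
So three $(4^+)$ moves suffice, plus a bounded number of $(0)$, $(1)$, $(3)$, $(6)$ moves to match orientations and basepoints.

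\emph{2-3 move}, replacing the common face $f$ by three faces around a new edge $e$.
\begin{itemize}
\item First, with at most one dual swap, make the dual edge of $f$ lie in $X'$.
\item Take $X_2=X$, so that $e$ becomes a new generator. Let $X_2'$ contain the dual edges of two of the three new faces, rerouting the old dual edges appropriately.
\item The remaining new face is a triangle containing $e$ and two old edges, so it gives one $(4^+)$ move.
\end{itemize}

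\emph{4-1 and 3-2 moves.} These are the reverses, and here the preparation is the real work. For 3-2, I need $e\notin X$ (at most one primal swap) and two of the three faces around $e$ dual to edges of $X'$ (at most two dual swaps). For 4-1, I need $X$ to contain exactly one of the four edges at the vertex being removed. I would achieve this by applying Lemma \ref{Lem:EdgeSwapForest} to a forest consisting of one such edge together with a spanning tree of the remaining graph avoiding the other three; this takes a bounded number of primal swaps. I also need $X'$ to restrict to a tree on the four tetrahedra through three of the interior faces whose complementary edges form a path, which takes at most three dual swaps. After this, the move is the reverse of the 1-4 case, which is three inverse $(4^+)$ moves.

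\emph{Counting and lengths.} Summing three dual swaps ($12t^2+36t-48$), a bounded number of primal swaps ($O(t)$ each), the $(4^+)$ moves, and the orientation fixes should give a total of at most $12t^2+108t-13$.

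For the length bound: each face not dual to $X'$ contributes a relation of length $3$, except the single 2-cell being slid during a dual swap. That 2-cell has attaching word of length at most $2t+1$ by the proof of Lemma \ref{Lem:EdgeSwapDualTree}. The expansions in Lemma \ref{Lem:EdgeSwap} add only a constant per relation. I expect this to bound the length by roughly $3(\#\text{faces}-\#\text{dual tree edges})+O(t)\le 6t+4$.

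\emph{Main obstacle.} I expect the hardest part to be the 4-1 case. There one must show that at most three dual swaps and a bounded number of primal swaps always put $X'$ and $X$ in the required local position around the vertex being removed. One must also track carefully that a primal swap (Lemma \ref{Lem:EdgeSwap}) costs only $O(t)$ moves, so that the constant $108$ is not exceeded. I would first try to choose the target trees greedily: extend the desired local configuration to a global tree, then invoke Lemma \ref{Lem:EdgeSwapForest} on the local forest, so that the number of swaps equals the number of local edges to be inserted.
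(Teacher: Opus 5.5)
Your proposal is essentially the paper's proof: the same case split over the four Pachner moves. In each case you use at most three dual edge swaps (Lemmas \ref{Lem:EdgeSwapForest} and \ref{Lem:EdgeSwapDualTree}) and a few primal edge swaps (Lemma \ref{Lem:EdgeSwap}) to put $X$ and $X'$ into the local position of the 1-4 or 2-3 configuration, and then three or one $(4^+)$ moves, or their inverses.

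The one step to tighten is the 4-1 primal preparation. Lemma \ref{Lem:EdgeSwapForest} applied to a global spanning tree bounds the number of swaps only by that tree's size, which is not bounded. There are two ways to fix this:
\begin{itemize}
\item Use the local forest made of a spoke $w$ already in $X$ together with outer edges of $\Delta$ spanning the far endpoints of the spokes. Any maximal tree containing it has $w$ as its only spoke, and at most three swaps are needed.
\item Do as the paper does: swap each further spoke $x\in X$ for the outer edge $a$ joining the far ends of $w$ and $x$, which cannot lie in $X$.
\end{itemize}
Either way the primal cost stays linear in $t$, and the count closes as in the paper.
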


\begin{proof}
Consider first a 1-4 Pachner move. This introduces one vertex and four new edges. The maximal tree $X$ may be extended to a maximal tree $X_2$ in the 1-skeleton of $T_2$ by adding one of these edges. Let $w$ be this new edge of $X_2$, and let $x$, $y$ and $z$ be the three news edges of $T_2$ not lying in $X_2$. (See Figure \ref{Fig:PachnerAC}.) The dual graph for $T_2$ is obtained from that of $T$ by replacing a vertex by a copy of the complete graph $K_4$. Coming out of this vertex was 4 four edges (which need not be distinct), and each of these four edges becomes attached to one of the four new vertices. A maximal tree $X_2'$ in this new dual graph can be obtained from the previous one by adding in 3 edges in this copy of $K_4$. We can arrange that these 3 edges lie in a line in $K_4$, and hence the three new edges of the dual graph not lying in $X'_2$ also lie in a line. We can arrange that these new edges not lying in $X_2'$ correspond to the shaded faces in Figure \ref{Fig:PachnerAC}. Label some of the edges in the boundary of the original tetrahedron as shown in the figure. Hence, $P(T_2, X_2, X_2')$ is obtained from $P(T,X,X')$ by the following moves: 
\begin{enumerate}[(i)]
\item introduce the new generator $x$ and the new relation $x = a$; here $a$ might be the identity element or it might be a generator of $P(T,X,X')$ or its inverse, depending on whether $a$ lies in $X$ or it does not;
\item introduce the new generator $y$ and the new relation $y = b$;
\item introduce the new generator $z$ and the new relation $z = yc$.
\end{enumerate}
Thus, in this case, three $(4^+)$ moves are used. Note that the length of the presentation is initially at most $3$ times the number of faces, and hence at most $6t$. So before the final $(4^+)$ move is performed, the length is at most $6t+4$.

\begin{figure}[h]
\centering
\includegraphics[width=0.35\textwidth]{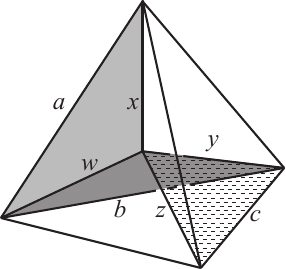}
\caption{The arrangement of tetrahedra after a 1-4 Pachner move}
\label{Fig:PachnerAC}
\end{figure}

Consider now a 4-1 Pachner move. The 1-skeleton of $T$ has four edges meeting at the vertex that is to be removed. As above, label these $w$, $x$, $y$ and $z$. Certainly at least one of these edges lies in $X$, $w$ say. We can actually arrange that the other three edges do not lie in $X$, as follows. Suppose, for instance, that $x$ lies in $X$. This implies that the 0-cells at the endpoints of $a$ are distinct. It also implies that $a$ does not lie in $X$, since $a$, $x$ and $w$ would form a cycle. Hence, we may perform an edge swap adding $a$ to $X$ and removing $x$. Repeat this if necessary for the other two edges $y$ and $z$. By Lemma \ref{Lem:EdgeSwap}, the number of moves of type (5) and (6) needed for each edge swap is at most $4(n-1) + 2\ell + \ell'$ where $n$ is the number of generators and $\ell$ and $\ell'$ are the lengths of the initial and final presentations. If $V$, $E$, $F$ denote the number of vertices, edges and faces of $T$, then $n = E - V + 1$. Also, $4t = 2F$ and $V - E + F - t = 0$. So $4(n-1) = 4(E-V) = 4(F-t) = 4t$. This is therefore an upper bound for the number of type (5) and (6) moves. Furthermore, when each move is applied, the length of the presentation is at most $6t$.

We now arrange that $X'_2$ is as above. Note that the dual graph contains a copy of $K_4$ corresponding to the four tetrahedra to be removed. In Figure \ref{Fig:PachnerAC}, the unshaded faces incident to the removed vertex form a line in this dual graph. Hence, by performing at most 3 edge swaps to $X'_2$, we may ensure that this line of edges lies in the maximal tree of the dual graph. Here, we are using Lemma \ref{Lem:EdgeSwapForest}. By Lemma \ref{Lem:EdgeSwapDualTree}, each edge swap has the effect of at most $4t^2+12t - 16$ Andrews-Curtis moves. After this, the presentation is as above. We can then perform (iii), (ii), (i) above in reverse, and obtain $P(T_2, X_2, X_2')$. We have used at most $12t^2 + 108t-13$ moves, and each time we applied them, the length of the presentation was at most $6t$.

Suppose now that we perform a 2-3 Pachner move. This introduces a new edge to the 1-skeleton of $T$. As it does not introduce any vertices, $X$ remains a maximal tree in the 1-skeleton, and so we set $X_2$ to be $X$. In the 2-3 Pachner move, a face is removed. This corresponds to an edge $e$ of the dual graph $X'$ joining distinct vertices of $X'$. By an edge swap, we can arrange that $e$ lies in $X'$. By Lemma \ref{Lem:EdgeSwapDualTree}, this uses at most $4t^2+12t-16$ Andrews-Curtis moves. In the dual graph after the Pachner move, there is a cycle of length 3 corresponding to the three new tetrahedra. We can obtain the new maximal tree $X'_2$ by using the same edges as in $X'$, apart from $e$, as well as two of the three edges in the cycle. Thus, $P(T_2, X_2, X_2')$ is obtained from the previous presentation by adding a new generator (corresponding to the new edge in the 1-skeleton) and a new relation that makes this generator equal to a word of length at most $2$ in the generators and their inverses. Thus, this is a single $(4^+)$-move.

Finally, consider a 3-2 Pachner move. This removes an edge $e$. We adjust the maximal tree $X$ if $e$ lies in $X$ as follows. Removing $e$ from $X$ creates a forest of two components. Let $x$ and $y$ be two edges in the boundary of the tetrahedra involved in the move, where $x$ is incident to one endpoint of $e$, $y$ is incident to the other endpoint of $e$, and $x$ and $y$ are incident. The path $x \cup y$ starts in one component of the forest and ends in the other. Hence, one of $x$ and $y$ has endpoints in the two different forest components, $x$ say. Adding $x$ to $X$ and removing $e$ forms the required edge swap. By Lemma \ref{Lem:EdgeSwap}, this is achieved using at most $4t$ type (5) and (6) moves.
In the dual 1-skeleton, there is the cycle of length three corresponding to the three tetrahedra to be removed. Using at most 2 edge swaps, we may ensure that two of these edges are in the maximal tree of the dual 1-skeleton. Thus, we are in the arrangement that appeared at the end of the 2-3 Pachner move considered above. Applying the 3-2 Pachner move is then an application of a $(4^+)$ move.
\end{proof}

\begin{proof}[Proof of Theorem \ref{Thm:ThickenableBound}]
We start with a balanced thickenable presentation $P$ for the trivial group with length $\ell$. Using at most $5\ell^2$ moves (0)-($4^+$), we convert it to a triangular balanced thickenable presentation $P'$ with at most $\ell$ generators and $\ell$ relations or to the standard one-generator presentation. In the latter case, the process terminates. Attaching a triangulated 3-ball $B$ as above, we obtain a triangulation $T$ of the 3-sphere with $t \leq 2\ell$ tetrahedra and with a single vertex. Let $X$ be this vertex, and let $X'$ be a maximal tree in the dual 1-skeleton of $B$. Then by Lemma \ref{Lem:FromPToPTXX}, there exists a sequence of at most $\ell$ moves of type ($4^+$) taking $P'$ to $P(T,X,X')$. As observed above, the length of $P(T,X,X')$ is at most $12 \ell$. Now apply Theorem \ref{Thm:PachnerS3} to obtain a sequence of at most 
$$M= a t^2 2^{bt^2} \leq 4 a \ell^2 2^{4 b \ell^2}$$
Pachner moves, where $a \leq 6 \cdot 10^6$ and $b \leq 5 \cdot 10^4$, taking $T$ to the standard 3-sphere triangulation. Since each Pachner move increases the number of tetrahedra by at most 3, all triangulations in this sequence have at most $3M$ tetrahedra (noting that, apart from $T$, at most $M-1$ Pachner moves relates any triangulation in the sequence to the standard one). By Proposition \ref{Prop:PachnerMoveChangePres}, after each Pachner move, there is a choice of auxiliary trees so that the resulting presentations differ by at most $108M^2 + 324 M -13$ moves (0)-(6). 

The final presentation arises from the standard triangulation of the 3-sphere. This has two vertices in its dual 1-skeleton, and so a maximal tree is a single edge. Thus, when we remove the interior of the face dual to this edge, as well as the interior of the two tetrahedra, the result is a disc comprised of three triangles. Depending on the maximal tree in the 1-skeleton and the orientations and basepoints on the faces, this gives one of the following presentations (up to relabelling the generators):
$$\langle x,y,z | x^{\pm 1}, y^{\pm 1}, z^{\pm 1} \rangle \qquad \langle x,y,z | x^{\pm 1}, y^{\pm 1}, z^{\pm1 } y^{\pm 1} \rangle \qquad
\langle x,y,z | x^{\pm 1}, y^{\pm 1}, y^{\pm1 } z^{\pm 1} \rangle$$
$$\langle x,y,z | x^{\pm 1}, y^{\pm 1}, x^{\pm 1} y^{\pm 1} z^{\pm 1} \rangle \qquad \langle x,y,z | x^{\pm 1}, x^{\pm 1} y^{\pm 1}, y^{\pm 1} z^{\pm1 }  \rangle.
$$
Each of these can be reduced to the standard one-generator presentation using at most 6 stable Andrews-Curtis moves.

So the total number of moves (0)-(6) is at most $5\ell^2 + \ell + 108M^3 + 324M^2 - 13M + 6$, which is at most $2^{kt^2}$
where $k \leq 7 \cdot 10^5$. Moreover, when each move is applied, the length of the presentation is at most $6M + 4\leq 2^{kt^2}$. 
\end{proof}

\section{An alternative approach using classical methods}
\label{Sec:AlternativeApproach}

As mentioned in the Introduction, the stable Andrews-Curtis conjecture for thickenable balanced presentations of the trivial group was well known. In this section, we give an outline of a proof using classical methods described in \cite{HogAngMetzler}. At the end, we speculate whether this alternative proof could be used to provide an explicit bound on the number of stable Andrews-Curtis moves.

We recall some of the terminology of \cite{HogAngMetzler, Rapaport}. A finite sequence of Andrews-Curtis moves is said to be a \emph{$Q$-transformation}. If, in addition, the Nielsen moves (5) and (6) are permitted, it is a \emph{$Q^\ast$-transformation}. If, furthermore, the stable Andrews-Curtis move (4) is allowed, it is a \emph{$Q^{\ast \ast}$-transformation}. Hence, by Lemma \ref{Lem:Nielsen} and Proposition \ref{Lem:Move4+}, a balanced presentation of the trivial group is related to the standard one-generator presentation by a $Q^{\ast \ast}$-transformation if and only if they differ by a finite sequence of stable Andrews-Curtis moves.
 
Recall that two finite cell complexes are \emph{simple homotopy equivalent} if they differ by a finite sequence of elementary expansions and collapses, which are defined as follows. Suppose that a cell complex $K'$ is obtained from a cell complex $K$ by attaching an $(n-1)$-cell $c_{n-1}$ followed by an $n$-cell $c_n$. So $K \cup c_{n-1}$ is obtained as a quotient of the disjoint union of $K$ and $D^{n-1}$. The restriction of the quotient map to $D^{n-1}$ is the \emph{characteristic map} of $c_{n-1}$. Suppose that the attaching map $\partial D^n \rightarrow K \cup c_{n-1}$ for $c_n$ satisfies the following:
\begin{enumerate}
\item the restriction of the map to the upper hemisphere of $\partial D^n$ is equal to the standard identification between the upper hemisphere and $D^{n-1}$ composed with the characteristic map of $c_{n-1}$;
\item the image of the lower hemisphere of $\partial D^n$ lies in $K$.
\end{enumerate}
Then $K$ is obtained from $K'$ by an \emph{elementary collapse}, and $K'$ is obtained from $K$ by an \emph{elementary expansion}. Two cell complexes are \emph{$n$-deformation equivalent} if they differ by a sequence of elementary expansions and collapses, where the cells involved have dimension at most $n$.
 
A central observation \cite{Wright} (see also \cite[Theorem 2.4]{HogAngMetzler}) is the following.
 
\begin{lemma}
\label{Lem:3Deformation}
Let $P$ and $P'$ be two finite group presentations. Then they differ by a $Q^{\ast \ast}$-transformation if and only if their presentation 2-complexes are $3$-deformation equivalent.
\end{lemma}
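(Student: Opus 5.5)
We prove the two directions separately. In each, we work move by move on one side and translate into the other.

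\emph{Forward direction.} It suffices to show that a single move of type $(0)$--$(6)$ (equivalently $(0)$--$(4)$, $(5)$, $(6)$) changes the presentation 2-complex by a 3-deformation.
\begin{enumerate}
\item Moves $(0)$, $(1)$ and $(6)$ leave the 2-complex unchanged up to cellular homeomorphism, or up to homotopy of the attaching map through a trivial backtracking.
\item Move $(3)$ changes an attaching map by a free homotopy, namely a change of basepoint along a loop.
\item Move $(2)$ replaces the attaching map of $r_i$ by a homotopic one in $K \cup c_j$, where the homotopy runs across the 2-cell $c_j$.
\item The key tool is the standard fact that if two 2-cells are attached along homotopic maps in a 2-complex $L$, then $L \cup_f e^2$ and $L\cup_g e^2$ are 3-deformation equivalent. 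To see this, expand by a 3-cell $e^3$ whose boundary consists of the homotopy cylinder together with the new 2-cell: first attach $e^2_g$ and a 3-cell collapsing it against $e^2_f$, then collapse from the other side.
\item Move $(4)$ is literally an elementary expansion: a 1-cell $x_{n+1}$ and a 2-cell $x_{n+1}$ attached along it as a free face.
\item Move $(5)$ is an expansion by a 1-cell $x_i'$ and a 2-cell $x_i' x_j x_i^{-1}$ with free face $x_i'$. This is followed by sliding all relations (homotopies, hence 3-deformations) and then collapsing the 2-cell via its free face $x_i$.
\end{enumerate}

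\emph{Reverse direction.} We use Wall/Wright's standard reduction of a 3-deformation. Its shape is:
\begin{enumerate}
\item Reorder the expansions and collapses so that all expansions come first, in increasing dimension, followed by collapses in decreasing dimension. This is done by perturbing attaching maps cellularly and using the fact that cells can be attached in order of dimension.
\item Thus $K$ and $K'$ have a common 3-dimensional expansion $L$, in which the 0- and 1-cell expansions produce the stabilisation moves $(4)$ and tree changes.
\item The 2-skeleton of $L$ is obtained from $K$ by adding trivial generator-relation pairs (move $(4)$) and 2-cells that are free faces of 3-cells.
\item Collapsing these 3-cells differently changes each relation by a product of conjugates of other relations, which is realised by moves $(2)$, $(3)$ and $(0)$.
\item Changes of maximal tree and of orientations when passing back to single-0-cell presentations are realised by Nielsen moves $(5)$ and $(6)$, as in Lemma \ref{Lem:EdgeSwap}.
\end{enumerate}

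\emph{Main obstacle.} The step I expect to be hardest is the reordering in the reverse direction, together with the case where a 3-cell collapse uses a 2-cell other than the one introduced with it. There, one must show that replacing the relator set by another normal-closure-generating set arising from a 3-cell corresponds to Andrews--Curtis moves. For this I would follow the argument of \cite{Wright} and \cite[Theorem 2.4]{HogAngMetzler} rather than reprove it.
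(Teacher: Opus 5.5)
The paper gives no proof of this lemma; it quotes it as a known result of \cite{Wright} (see also \cite[Theorem 2.4]{HogAngMetzler}). Your proposal proves the easy direction directly and correctly, using elementary expansions and collapses for moves $(4)$ and $(5)$, and for the remaining moves the standard fact that homotopic attaching maps of a $2$-cell give $3$-deformation equivalent complexes. It defers the hard direction to the same two sources, so it is essentially in line with the paper.

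One small correction to your sketch of the converse: a $2$-dimensional elementary expansion introduces a generator $y$ with a relation $yw^{-1}$ for an arbitrary word $w$, not just the trivial pair. This is realised by a move $(4)$ followed by the Nielsen automorphism that fixes the old generators and sends $y$ to $yw^{-1}$, which is legitimate because $y$ occurs in no other relation; a move $(4)$ alone does not suffice.
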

 
Thus, an alternative proof that any thickenable balanced presentation $P$ of the trivial group can be reduced to the standard one-generator presentation $P'$ via stable Andrews-Curtis moves is as follows. Let $K$ be the presentation 2-complex of $P$, and let $N(K)$ be its 3-manifold regular neighbourhood. Then $N(K)$ (with a suitable cell structure) collapses to $K$, via 3-deformation collapses. Let $K'$ be the presentation complex associated with the standard one-generator presentation of the trivial group; topologically, this is just a disc. We may embed $K'$ in the interior of $N(K)$, and then $N(K)$ is also a regular neighbourhood of $K'$. Thus, we are in the setting of the following proposition. This is stated in \cite[Chapter I, Section 2]{HogAngMetzler} but a full proof is not given.
 
\begin{proposition}
Let $M$ be a compact 3-manifold containing 2-complexes $K$ and $K'$ such that $M$ is a regular neighbourhood of $K$ and a regular neighbourhood of $K'$. Then $K$ and $K'$ are $3$-deformation equivalent.
\end{proposition}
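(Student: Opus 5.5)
We work in the PL category and use handle structures. Triangulate $M$ so that $K$ and $K'$ are subcomplexes. Since $M$ is a regular neighbourhood of $K$, it collapses simplicially onto $K$; reversing gives a sequence of elementary expansions from $K$ to $M$. Simplicial collapses of a 3-dimensional triangulation involve only cells of dimension at most 3. Hence $K$ is 3-deformation equivalent to $M$ with this triangulation. The same argument applies to $K'$, provided we use a triangulation of $M$ in which $K'$ is a subcomplex and $M$ collapses to $K'$. The essential content is therefore twofold: regular neighbourhoods collapse onto their cores, and two triangulations of the same compact 3-manifold $M$ are 3-deformation equivalent.

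For the first point, I would invoke the standard regular neighbourhood theory (Whitehead/Rourke--Sanderson). If $N$ is a regular neighbourhood of $K$ in the second derived subdivision, then $N \searrow K$. Any other regular neighbourhood is ambient isotopic to this one, relative to $K$. Thus "$M$ is a regular neighbourhood of $K$" gives a triangulation $T_1$ of $M$ with $T_1 \searrow K$. Likewise we get a triangulation $T_2$ with $T_2 \searrow K'$. The dimension stays at most 3 throughout, since all cells lie in $M$.

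For the second point, subdivisions cause no trouble. A stellar subdivision of a 3-dimensional complex is a 3-deformation: subdividing a simplex $\sigma$ at an interior point amounts to expanding the cone over the link and collapsing $\sigma$, using cells of dimension at most 3. Two triangulations of a compact PL 3-manifold have a common subdivision. By the theory of stellar moves (Alexander--Newman), this common subdivision can be reached from each by a finite sequence of stellar subdivisions and welds, all of dimension at most 3. Alternatively, for closed manifolds one can use Pachner's theorem as in Section \ref{Sec:3Sphere}, noting that each Pachner move is a 3-deformation. Chaining the steps gives $K \nearrow T_1 \sim T_2 \searrow K'$ by 3-deformations.

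The main obstacle is the middle step: checking that each local move relating triangulations (a stellar move on a simplex meeting $\partial M$, or the relative version of the common-subdivision statement) is realised by expansions and collapses with cells of dimension at most $3$, rather than by embedding everything in a higher-dimensional cone. I would first try to reduce to bistellar/Pachner moves on the double $DM$, or to Pachner's bounded-manifold moves. For each move I would check directly that it is an expansion of a $3$-simplex together with a free face, followed by collapses.
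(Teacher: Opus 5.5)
\textbf{There is a genuine gap in the middle step.}

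Your outline $K \nearrow T_1 \sim T_2 \searrow K'$ is the same sketch the paper gives. The paper offers no further proof, and in item (3) of Section~\ref{Sec:AlternativeApproach} it flags comparing the two cell structures on $M$ as the real content. That comparison is exactly where your argument fails: in general, neither a stellar subdivision nor a Pachner move is a $3$-deformation.

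Here is why. Each elementary $3$-expansion adds a $3$-cell with a private free face of incidence $\pm 1$, so it does not change the space $Z_3$ of mod~$2$ $3$-cycles. Dually, a $3$-collapse does not change $Z_3$, and moves of lower dimension do not touch $3$-cells. Hence $Z_3$, as a space of chains on specific $3$-cells, is invariant under $3$-deformations. In a closed connected triangulated $3$-manifold, $Z_3$ is spanned by the sum of all tetrahedra. So a $1$--$4$ or $2$--$3$ move, or starring a tetrahedron, can never be realised by a $3$-deformation.

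Locally, the cone $v * \partial\sigma$ is glued along its entire boundary, so it is not an expansion. The honest realisation passes through the $4$-simplex $v * \sigma$. Consequently the detour through the double $DM$ cannot work. A direct local check of interior moves cannot succeed either, since the local picture is identical to the closed case. Note also that your argument never uses $\partial M \neq \emptyset$, and that is precisely what separates the true statement from its false closed analogue.

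\textbf{How to repair it.} The repair must use the boundary, which is nonempty because $M \searrow K$ with $\dim K = 2$. There are two ways to do this.

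\begin{enumerate}
\item Use Pachner's theorem for manifolds with boundary in its shelling form: two triangulations of a PL manifold with nonempty boundary are related by elementary shellings and inverse shellings. In dimension $3$, an elementary shelling deletes a tetrahedron $\sigma$ together with the faces of $\sigma$ whose interiors lie in the interior of the disc $\sigma \cap \partial M$. These faces form an interval in the face poset of $\sigma$ and lie in no other simplex. So the shelling is a sequence of elementary collapses of dimension at most $3$. Shellings are the only moves for which your proposed direct check actually works.
\item Keep Alexander--Newman, but argue globally rather than locally. Any triangulation $T$ of $M$ collapses onto a $2$-dimensional subcomplex $S$, because $H_3(M;\mathbb{Z}/2)=0$ forces some remaining tetrahedron to have a free face. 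Collapses survive stellar subdivision, so $T^{*} \searrow S^{*}$. Finally $S \nearrow\searrow S^{*}$ by a $3$-deformation, because subdividing a $2$-complex costs only one extra dimension: the cone over a closed star has dimension at most $3$.
\end{enumerate}

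This last fact is also needed, though unstated in your write-up, to pass from the cell complexes $K$, $K'$ (for instance one-vertex presentation complexes) to the simplicial subdivisions that actually sit inside $T_1$ and $T_2$.
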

 
Hence, by Lemma \ref{Lem:3Deformation}, $P$ and $P'$ differ by a $Q^{\ast \ast}$-transformation, and so differ by a sequence of stable Andrews-Curtis moves.
 
\subsection{Controlling the number of moves}
 
To provide a concrete bound on the number of stable Andrews-Curtis moves relating $P$ and $P'$, as in Theorem \ref{Thm:StableACBound}, we would need to quantify the above argument, in three ways:
\begin{enumerate}
\item We would need to convert a $Q^{\ast \ast}$-transformation into an explicit sequence of stable Andrews-Curtis moves.
\item We would need to convert an elementary expansion or collapse, using cells of dimension at most 3, into a $Q^{\ast \ast}$-transformation.
\item Although we have elementary collapses from $N(K)$ to $K$ and from $N(K')$ to $K'$, this does not immediately provide a sequence of 3-deformation elementary expansions and collapses relating $K$ and $K'$, since we need to take account of the fact that $N(K)$ and $N(K')$ have different cell structures.
\end{enumerate}
The method of dealing with (1) is developed in Section 2. It seems straightforward to handle (2). However, it is (3) that seems most challenging. To relate two different cell structures for a compact 3-manifold is at the heart of the homeomorphism problem for compact 3-manifolds. The homeomorphism problem for compact orientable 3-manifolds is solved \cite{Kuperberg, ScottShort}, and it is well known that this is equivalent to the existence of a computable upper bound on the number of Pachner moves needed to relate two triangulations of a compact orientable 3-manifold (see \cite[Theorem 3.1]{Lackenby} for instance). Indeed, Mijatovic's Theorem \ref{Thm:PachnerS3} was based on the Rubinstein-Thompson algorithm for 3-sphere recognition \cite{Rubinstein, Thompson}. Thus, although the methods developed in \cite{HogAngMetzler} do lead to an alternative proof that thickenable balanced presentations of the trivial group can be trivialised via stable Andrews-Curtis moves, they do not easily lead to an explicit bound on the number of these moves, as in Theorem \ref{Thm:StableACBound}. Indeed, to do so would seem to require a solution to the recognition problem for the 3-sphere, which underpins Mijatovic's and King's theorems \cite{Mijatovic, King}, and hence it would probably need to follow some version of the techniques presented in this paper.

\section{The Andrews-Curtis conjecture}
\label{Sec:ACConjecture}

Our aim in this section is to prove Theorem \ref{Thm:AC}. As explained in the Introduction, the proof is primarily due to Guo \cite{Guo}, who in turned relied on a theorem about Heegaard splittings of the 3-sphere 
due to Waldausen \cite{Waldhausen}. We therefore now recall some terminology about Heegaard splittings.

A \emph{Heegaard splitting} for a closed orientable 3-manifold $M$ is a description of $M$ as the union of two handlebodies $H_1$ and $H_2$ glued via a homeomorphism between their boundaries. We view $H_1$ and $H_2$ as subsets of $M$, and write this splitting as $(H_1, H_2)$. The \emph{genus} of the splitting is equal to the genus of the two handlebodies. We say that two Heegaard splittings $(H_1, H_2)$ and $(H'_1, H'_2)$ of $M$ are \emph{equivalent} if  there is an orientation-preserving homeomorphism of $M$ taking $H_1$ to $H_1'$ and taking $H_2$ to $H'_2$. The 3-sphere admits a \emph{standard Heegaard splitting} of any genus $g$, obtained by taking $H_1$ to be a regular neighbourhood of a planar embedding of a wedge of $g$ circles, and letting $H_2$ be the closure of the complement of $H_1$. Waldhausen's theorem is as follows.

\begin{theorem}
\label{Thm:Waldhausen}
Any Heegaard splitting of 3-sphere is equivalent to a standard one.
\end{theorem}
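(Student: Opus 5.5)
The plan is to induct on the genus $g$ of the splitting $(H_1,H_2)$, reducing everything to one claim. That claim is that every Heegaard splitting of $S^3$ of positive genus is \emph{stabilised}: there are properly embedded discs $D_1 \subset H_1$ and $D_2 \subset H_2$ whose boundaries meet transversely in exactly one point of $\partial H_1$.

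\textbf{Base case and reduction.} For $g = 0$ the splitting is two 3-balls glued along a 2-sphere. By Alexander's theorem (every PL 2-sphere in $S^3$ bounds balls on both sides), any two such splittings are equivalent by an orientation-preserving homeomorphism, namely the cone extension of a homeomorphism between the splitting spheres.

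For the inductive step, suppose $D_1,D_2$ exist as above. Then $D_2$ is a 2-handle cancelling the 1-handle of $H_1$ dual to $D_1$. Hence $H_1' = H_1 \cup N(D_2)$ and $H_2' = \overline{H_2 - N(D_2)}$ form a Heegaard splitting of genus $g-1$, and $(H_1,H_2)$ is recovered from $(H_1',H_2')$ by a stabilisation. By induction $(H_1',H_2')$ is equivalent to the standard splitting of genus $g-1$.

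It remains to use that stabilisation is well defined up to equivalence. Any two stabilisations of a fixed splitting are equivalent: the boundary-parallel unknotted arc used to stabilise can be isotoped within $H_2'$ into a small ball meeting $\partial H_1'$ in a disc, and all such local models are equivalent. Since the standard splitting of genus $g$ is a stabilisation of the standard one of genus $g-1$, this completes the induction.

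\textbf{Main obstacle: the claim itself.} I would follow the thin position argument of Scharlemann and Thompson. Let $\Gamma$ be a trivalent spine of $H_1$, so $H_2$ deformation retracts to the complement of a neighbourhood of $\Gamma$. Fix a height function $h \colon S^3 \to [-1,1]$ whose level sets are 2-spheres. Isotope $\Gamma$ (and slide its edges) to be in thin position with respect to $h$, minimising the width, i.e.\ the sum over regular levels between critical points of the number of intersections with $\Gamma$.

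Next examine a thick level sphere $S$. Thinness forces every disc of $S \setminus \Gamma$ to have a particular type, by the following dichotomy.
\begin{itemize}
\item If there are an upper disc and a lower disc that can be made disjoint, the width can be reduced, contradicting thinness.
\item If such discs meet in a single point, they give a cancelling pair, and this yields $D_1$ and $D_2$ as required.
\end{itemize}
I expect the delicate case analysis here to be the hard part, in particular ruling out the situation where no level sphere has both upper and lower discs. I would handle it as Scharlemann--Thompson do: a level sphere with no such discs lets one push $\Gamma$ so that a neighbourhood of the sphere meets $\Gamma$ in a planar way. Iterating this shows $\Gamma$ is isotopic, after edge slides, to a graph lying in a level 2-sphere.

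Finally, once $\Gamma$ is planar, its regular neighbourhood has a visible cancelling disc for any cycle of $\Gamma$ bounding a face in the plane. This gives the stabilisation directly, and in fact shows outright that the splitting is standard.
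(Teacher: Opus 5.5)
The paper gives no proof of this statement. It is quoted from Waldhausen \cite{Waldhausen} and used as a black box in the proof of Theorem~\ref{Thm:Qast}, so your argument has to stand on its own. The outer layer is fine: the genus-$0$ case via Alexander's theorem, destabilising along $D_1, D_2$, and uniqueness of stabilisation are all standard. But the claim you reduce to, that every splitting of $S^3$ of positive genus is stabilised, is essentially the whole content of Waldhausen's theorem. Your thin-position sketch does not establish it.

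\textbf{The handlebody hypothesis is never used.} You never use that $H_2 = S^3 \setminus \mathrm{int}\, N(\Gamma)$ is a handlebody. Every step you describe makes equal sense for an arbitrary spatial graph: thin position, upper and lower discs at a thick level, the treatment of levels with no such discs, and the iteration to planarity. For such graphs the conclusion \emph{planar after edge slides} is false, e.g.\ for a knotted circle. Levels with neither kind of disc genuinely occur in that generality, for instance the thin level in a thin position of the connected sum of two trefoils. So your assertion that such a level lets one push $\Gamma$ into a planar position cannot be right as stated. This is exactly where the hypothesis must enter. For example, the planar surface $S \cap H_2$ lies in a handlebody, so it is compressible or $\partial$-compressible there, and that is what supplies upper/lower discs or a reducing sphere.

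\textbf{Your dichotomy is wrong.} Upper and lower discs have part of their boundary on the level sphere $S$, not on $\partial H_1$. So two of them meeting in one point do not give discs $D_1 \subset H_1$, $D_2 \subset H_2$ with $|\partial D_1 \cap \partial D_2| = 1$. Indeed, when their arcs are subarcs of edges, this is precisely the configuration in which the adjacent maximum and minimum cancel and the width drops. A stabilising pair comes instead from, say, an upper and a lower disc with the same endpoints and the same arc in $S$. Their union is a disc bounded by a cycle of $\Gamma$ with interior disjoint from $\Gamma$; pair it with a meridian disc of an edge of that cycle.

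The dichotomy also ignores the typical hard case, where both kinds of disc exist at a level but every pair meets essentially, as for a trefoil in bridge position. Finally, if the analysis can end in a reducing sphere rather than a stabilisation, your induction must also handle reducible splittings, as connected sums of lower-genus splittings of $S^3$.

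As it stands, the key step is a citation of Scharlemann--Thompson with an inaccurate summary rather than a proof.
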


In a handlebody $H$ of genus $g$, there is a collection of $g$ disjoint properly embedded discs $D_1, \dots, D_g$ such that the closure of $H \setminus (D_1 \cup \dots \cup D_g)$ is a 3-ball. Such a collection of discs is called a \emph{meridional collection}, and the discs are called \emph{meridian discs}. Suppose that, for a Heegaard splitting $(H_1, H_2)$, meridional collections $\mathcal{D}_1$ and $\mathcal{D}_2$ are chosen for $H_1$ and $H_2$ respectively. Then $D_i \in \mathcal{D}_1$ and $D'_j \in \mathcal{D}_2$ are a \emph{destabilising pair} if $D_i$ intersects $D'_j$ transversely precisely once and misses all other discs in $\mathcal{D}_2$ and similarly $D'_j$ misses all other discs in $\mathcal{D}_1$. Then one can form a new Heegaard splitting with genus that is 1 smaller. One of the handlebodies is isotopic to the closure of $H_1 \setminus N(D_i)$.

The standard Heegaard splitting $(H_1, H_2)$ of the 3-sphere with genus $g$ has meridional collections $\{ D_1, \dots, D_g \}$ and $\{ D'_1, \dots, D'_g \}$ such that $D_i$ and $D'_i$ form a destabilising pair for each $i$. We call this the \emph{standard pair} of meridional collections for this splitting.

When meridional collections $\mathcal{D}_1$ and $\mathcal{D}_2$ are chosen for a Heegaard splitting $(H_1, H_2)$ of a 3-manifold $M$, this determines a presentation for $\pi_1(M)$, once a small amount of extra data is chosen. If we pick a transverse orientation on each disc in $\mathcal{D}_1$, then $\pi_1(H_1)$ becomes identified with a free group on $g$ generators. The meridian discs $\mathcal{D}_2$ determine relations, once the boundary of each of these discs is oriented and a basepoint is chosen for each one. For example, with suitable choices, the standard pair of meridional collections specifies the standard presentation $\langle x_1, \dots, x_g | x_1, \dots, x_g \rangle$ for the trivial group.

It is possible to modify a meridional collection of discs for a handlebody $H$ to a new meridional collection $D_1, D_2, \dots, D_{i-1}, D_i', D_{i+1}, \dots D_g$, via a \emph{disc slide} which is defined as follows. Pick an embedded arc $\alpha$ in $\partial H$ joining $\partial D_i$ to the boundary of some other $D_j$, but which is otherwise disjoint from $D_1 \cup \dots \cup D_g$. Let $N$ be a regular neighbourhood of $D_i \cup \alpha \cup D_j$ in $H$. Then the closure of $\partial N \setminus \partial H$ consists of three discs. Two of these are parallel to $D_i$ and $D_j$. Let $D'_i$ be the third disc.

The following is simple and well known \cite{Singer} (see also \cite[Theorem 1]{Wajnryb}).

\begin{lemma}
\label{Lem:DiscSlides}
Any two meridional collection of discs for a handlebody differ by a sequence of disc slides and ambient isotopies.
\end{lemma}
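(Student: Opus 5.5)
The natural approach is to reduce to the case of a handlebody with a fixed ``standard'' meridional collection, and to show that any meridional collection can be carried to it by disc slides and isotopies. Since disc slides are invertible (the reverse of a slide of $D_i$ over $D_j$ is a slide of $D_i'$ over $D_j$ along a parallel arc), this suffices for any two collections. Let $\mathcal{D} = \{D_1,\dots,D_g\}$ and $\mathcal{E} = \{E_1,\dots,E_g\}$ be meridional collections for $H$. I would induct on the complexity $|\mathcal{D}\cap\mathcal{E}|$, the number of components of intersection, after an isotopy making the two collections transverse.

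The first step removes closed curves of intersection. Take an innermost closed curve of $\mathcal{D}\cap\mathcal{E}$ in some $E_j$. It bounds a disc in $E_j$ and also a disc in some $D_i$. Irreducibility of $H$ (the union of the two discs is a sphere bounding a ball) gives an isotopy of $\mathcal{D}$ removing this curve and possibly others. So we may assume all intersections are arcs.

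The second step handles the arcs. If there are arcs, take an outermost arc $\beta$ in some $E_j$, cutting off a subdisc $F \subset E_j$ whose interior misses $\mathcal{D}$. Then $F$ lies in the 3-ball $B = H \setminus N(\mathcal{D})$, with its boundary consisting of $\beta$ on a copy of some $D_i^{\pm}$ and an arc $\gamma$ on $\partial B$ between copies. Surgery of $D_i$ along $F$ yields two discs, $D_i^1$ and $D_i^2$; at least one of them, say $D_i^1$, together with the other $D_k$ forms a meridional collection, as one checks by noting that $F$ separates $B$. This new collection meets $\mathcal{E}$ in fewer arcs. The key point, and what I expect to be the main obstacle, is to show that replacing $D_i$ by $D_i^1$ is realised by a sequence of disc slides. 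The disc $D_i^1$ is isotopic to a disc obtained from $D_i$ by sliding it over the other discs whose copies lie on one side of $\gamma$ in $\partial B$. To see this concretely, I would view $\partial B$ as a sphere with $2g$ marked discs. The arc $\gamma$ separates the copies of the $D_k$ into two groups. Then $D_i^1$ is the band sum of $D_i$ with the copies on one side, and I would slide $D_i$ successively along arcs over each of those copies, handling the case where both copies $D_k^\pm$ lie on the same side (such a $D_k$ contributes nothing up to isotopy, or two slides). Careful bookkeeping is needed to check that each intermediate family is still meridional.

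Finally, once $\mathcal{D}\cap\mathcal{E} = \emptyset$, both collections cut $H$ into balls. Each $E_j$ is then a disc in the ball $B$ with boundary on $\partial B$ minus the copies of $\mathcal{D}$, so it separates those copies. I would argue by induction on $g$ that $\mathcal{D}$ can be slid to $\mathcal{E}$. If $E_j$ is parallel to some $D_i$, identify them and cut along them. Otherwise $E_j$ separates the copies, and it can be obtained from some $D_i$ by slides over the discs on one side, exactly as in the surgery step. Replacing $D_i$ by $E_j$ then reduces to a handlebody of lower genus with common disc $E_j$, and the induction completes the proof.
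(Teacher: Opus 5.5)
The paper does not prove this lemma; it quotes it as simple and well known, citing Singer and Theorem~1 of Wajnryb. Your argument is the standard self-contained proof, and it is correct in outline: remove closed curves with innermost discs, surger $D_i$ along an outermost subdisc $F\subset E_j$, realise each surgery as an iterated band sum (a sequence of slides), and finish once the systems are disjoint. Beyond making the paper self-contained, it is quantitative. Each surgery costs at most $2g$ slides, and so does each of the at most $g$ replacements in the disjoint stage, so at most $2g(|\mathcal{D}\cap\mathcal{E}|+g)$ slides are needed. This locates the difficulty noted at the end of Section~\ref{Sec:ACConjecture}: what is missing there is control over how the standard pair pulled back via Waldhausen's theorem meets the given meridian discs, not anything in this lemma.

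Several details need correcting or tightening.
\begin{itemize}
\item Since $F$ lies on one side of $D_i$ near $\beta$, both endpoints of $\gamma$ lie on the \emph{same} copy, say $D_i^+$, not ``between copies''. The good surgered disc is the one separating $D_i^+$ from $D_i^-$ in $B$. Its boundary bounds a disc $\Sigma\subset\partial B$ containing $D_i^+$ and the spots on the side of $\partial F$ away from $D_i^-$.
\item The partition of spots does not determine this curve. So take the slide arcs to be disjoint arcs in $\Sigma$ emanating from $D_i^+$, each truncated at the current disc. The iterated band sum is then isotopic in $B$ to the surgered disc.
\item If both $D_k^\pm$ lie in $\Sigma$, your option that $D_k$ ``contributes nothing up to isotopy'' is wrong in general. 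Including the pair changes the boundary by a twice-holed torus in $\partial H$, and for $g\ge 3$ the resulting disc is not isotopic to the one without it. You need both slides. They are legitimate, because after sliding over $D_k^+$ the truncated arc to $D_k^-$ still avoids all current discs.
\item A slide of a meridional collection is again meridional, so the intermediate families need no further checking.
\item In the disjoint stage, inducting on genus by cutting along $E_j$ needs one more remark. Slides and isotopies in $H\cut E_j$ may cross the two scars of $E_j$, and in $H$ these become slides over $E_j$.
\end{itemize}

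It is simpler not to cut at all. Keep the discs already shared, and choose an unshared $E_j$ together with an unshared $D_i$ whose copies $D_i^\pm$ lie on opposite sides of $E_j$. Such a $D_i$ exists because $E_j$ is non-separating in $H$ cut along the shared discs. Replace $D_i$ by $E_j$ via slides, and repeat.
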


The following result is essentially the argument given by Guo \cite{Guo}.

\begin{theorem}
\label{Thm:Qast}
Let $P$ be a thickenable balanced presentation of the trivial group. Then $P$ is related to a standard presentation by a $Q^{\ast}$-transformation.
\end{theorem}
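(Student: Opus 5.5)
The plan is to realise $P$ as the presentation coming from a Heegaard splitting of $S^3$ with a pair of meridional collections, and then move to the standard pair using Waldhausen's theorem and disc slides. Let $K$ be the presentation 2-complex of $P$ and $N(K)$ its regular neighbourhood, which is a 3-ball by Perelman. The handle structure on $N(K)$ has one 0-handle, $n$ 1-handles and $n$ 2-handles. Let $H_1$ be the union of the 0-handle and the 1-handles; this is a genus $n$ handlebody. Gluing a 3-ball $B$ to $\partial N(K)$ gives $S^3$. The closure of the complement of $H_1$ is the union of the 2-handles and $B$, which is a handlebody $H_2$ of genus $n$. The cocores of the 2-handles form a meridional collection $\mathcal{D}_2$ of $H_2$, since cutting along them leaves $B$ together with collars. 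The cocores of the 1-handles form a meridional collection $\mathcal{D}_1$ of $H_1$. With suitable transverse orientations, basepoints and orientations, the presentation determined by $(\mathcal{D}_1, \mathcal{D}_2)$ is exactly $P$: the boundary of each meridian disc of $\mathcal{D}_2$ is the attaching circle of a 2-handle, and it reads off the relation.

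By Theorem~\ref{Thm:Waldhausen}, there is an orientation-preserving homeomorphism of $S^3$ taking $(H_1,H_2)$ to the standard genus $n$ splitting. Pulling back the standard pair gives meridional collections $\mathcal{E}_1,\mathcal{E}_2$ for $H_1,H_2$, and the presentation they determine is the standard one $\langle x_1,\dots,x_n \mid x_1,\dots,x_n\rangle$, up to the choices of orientations and basepoints. By Lemma~\ref{Lem:DiscSlides}, $\mathcal{D}_2$ and $\mathcal{E}_2$ differ by disc slides and isotopies in $H_2$, and $\mathcal{D}_1$ and $\mathcal{E}_1$ differ likewise in $H_1$. It therefore suffices to track the effect of each operation on the presentation:
\begin{enumerate}[(a)]
\item An isotopy of $\mathcal{D}_2$ in $H_2$ changes each relation word by a homotopy of its boundary curve in $\partial H_1$. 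We may put the curves in general position with respect to $\partial\mathcal{D}_1$, so each elementary step either creates or cancels a pair of intersections, which is a move of type (0), or moves the basepoint, which is a cyclic permutation achieved by moves (3) and (0). Reversing a boundary orientation is move (1).
\item A disc slide of $D'_i$ over $D'_j$ along an arc $\alpha$ replaces $r_i$ by a product $r_i\, g r_j^{\pm1} g^{-1}$, where $g$ is the word read along $\alpha$. This is realised by conjugation moves (3), inversion (1), a product move (2), and cancellations (0).
\item Changing $\mathcal{D}_1$ changes the free basis of $\pi_1(H_1)$. A disc slide in $H_1$ corresponds to a Nielsen move of type (5), possibly combined with (6), applied to every relation. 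An isotopy of $\mathcal{D}_1$ again only changes words by moves of type (0).
\item Changing transverse orientations is move (6), and relabelling or re-indexing is harmless.
\end{enumerate}
Combining these steps, $P$ is carried to the presentation of the standard pair by a $Q^\ast$-transformation. That presentation is a standard presentation, up to signs, which moves (1) and (6) fix.

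I expect the main obstacle to be step (c), and more precisely the bookkeeping in (b) and (c) showing that a disc slide in $H_1$ induces exactly a Nielsen automorphism. Writing the slid disc $D_i'$, a loop crossing $D_i'$ crosses $D_i$ and $D_j$ in a definite pattern, so the generator dual to $D_i$ is replaced by $x_ix_j^{\pm1}$ or $x_j^{\pm1}x_i$. I would check this directly on a neighbourhood of $D_i\cup\alpha\cup D_j$, and verify that every relation word is transformed by the same substitution up to free reduction. A secondary subtlety is that the isotopies and slides of $\mathcal{D}_1$ and $\mathcal{D}_2$ must be performed in some order. I would first move $\mathcal{D}_1$ to $\mathcal{E}_1$, which leaves $\mathcal{D}_2$ untouched except for reinterpreting words. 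I would then move $\mathcal{D}_2$ to $\mathcal{E}_2$ inside $H_2$, which leaves the basis fixed.
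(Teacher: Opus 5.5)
Your proposal is correct and is essentially the paper's proof: the splitting of $N(K)\cup B$ into $H_1$ (the 0- and 1-handles) and $H_2$, Theorem~\ref{Thm:Waldhausen}, Lemma~\ref{Lem:DiscSlides}, slides in $H_2$ as Andrews--Curtis moves, and slides and reorientations in $H_1$ as Nielsen moves (5) and (6); your handling of isotopies is more explicit than the paper's. There are two harmless slips: the meridian discs of $H_2$ are the \emph{cores} of the 2-handles, since their cocores are arcs; and when $D_i$ is slid over $D_j$ in $H_1$, it is the dual generator of $D_j$, not of $D_i$, that changes (to $x_i^{\pm1}x_j$ or $x_jx_i^{\pm1}$, depending on conventions), which is still a Nielsen move.
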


\begin{proof}
Let $P$ be a thickenable balanced presentation of the trivial group, and let $K$ be its presentation 2-complex. Let $N(K)$ be its 3-manifold regular neighbourhood. Attach on a 3-ball to $\partial N(K)$, forming a 3-sphere $M$. This has a natural Heegaard splitting, where one handlebody $H_1$ is the union of the 0-handle and 1-handles of $N(K)$, and where $H_2$ is the closure of the complement of $H_1$. Let $g$ be its genus. It also has a natural meridional collection of discs for each handlebody. For $H_1$, this collection $\mathcal{D}_1$ is the set of co-cores of the 1-handles. For $H_2$, the collection $\mathcal{D}_2$ is the set of cores of the 2-handles. Then, when suitable orientations and basepoints are chosen, the presentation for $\pi_1(M)$ arising from $\mathcal{D}_1$ and $\mathcal{D}_2$ is $P$.

By Waldhausen's Theorem \ref{Thm:Waldhausen}, this Heegaard splitting is equivalent to the standard one. Hence, by Lemma \ref{Lem:DiscSlides}, there is a sequence of disc slides that one can apply to $\mathcal{D}_1$ and $\mathcal{D}_2$ taking them to the standard pair of meridional systems.

A disc slide to the discs in $\mathcal{D}_2$ has the following effect on the presentation $\langle x_1, \dots, x_g | r_1, \dots, r_g \rangle$ arising from $\mathcal{D}_1$ and $\mathcal{D}_2$. Suppose that the arc $\alpha$ along which we slide has endpoints on $D_i$ and $D_j$ that are equal to the basepoints of these discs. This can be achieved by changing these basepoints. A change of basepoints to a disc in $\mathcal{D}_2$ induces a cyclic permutation of the corresponding relation and this achieved by a sequence of type (0) and (3) Andrews-Curtis moves. Then the disc slide replaces $r_i$ by the product of $r_i$ and a conjugate of $r_j$ or its inverse, because the boundary of the new disc runs around $\partial D_i$, then along $\alpha$, then around $\partial D_j$ and then back along $\alpha$. Finally, a change of orientation to a disc in $\mathcal{D}_2$ changes the corresponding relation by a type (1) move. Hence, the effect of these modifications to the meridional discs in $H_2$ is achieved by Andrews-Curtis moves.

A disc slide to the discs in $\mathcal{D}_1$ has the effect of changing the generators by a Nielsen move (5). A change of orientation to one of these discs  replaces the corresponding generator by its inverse, which is Nielsen move (6).

Thus, the presentation $P$ is indeed related to the standard $g$ generator presentation of the trivial group by a sequence of Andrews-Curtis moves (0)-(3) and Nielsen moves (5) and (6).
\end{proof}

To complete the proof of Theorem \ref{Thm:AC}, we need to show how to eliminate the use of Nielsen moves. Guo \cite{Guo} does not explain how to do this. But it is achieved by the following result, which, to the author's knowledge, is new.

\begin{theorem}
\label{Thm:QastToAC}
Let $P$ be a balanced presentation of the trivial group. If $P$ is related to a standard presentation by a $Q^{\ast}$-transformation, then these presentations are related by Andrews-Curtis moves.
\end{theorem}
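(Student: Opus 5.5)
The plan is to push all Nielsen moves to the end of the $Q^{\ast}$-transformation, and then to eliminate them by a direct application of Nielsen's theorem to the relators rather than to the generators. Throughout, I regard a presentation $\langle x_1,\dots,x_n \mid r_1,\dots,r_n\rangle$ up to free equality of the relators, since moves of type (0) realise free reduction and expansion. A Nielsen move (5) or (6) then amounts to replacing each $r_i$ by $\phi(r_i)$, for an automorphism $\phi$ of the free group $F=\langle x_1,\dots,x_n\rangle$, and relabelling the generators.

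\textbf{Step 1 (commutation).} Suppose $P_1 \to P_2$ is a Nielsen move given by $\phi$, followed by an Andrews-Curtis move $P_2 \to P_3$. I claim there is a presentation $P_2'$ with $P_1 \to P_2'$ a sequence of AC moves and $P_3 = \phi(P_2')$. I check this move by move.
\begin{enumerate}
\item[(0)] A type (0) move on $P_2$ does not change the relators up to free equality, so it corresponds to (0) moves on $P_1$.
\item[(1), (2)] Moves of type (1) and (2) commute with $\phi$ on the nose, because $\phi$ is a homomorphism: $\phi(r_i^{-1})=\phi(r_i)^{-1}$ and $\phi(r_ir_j)=\phi(r_i)\phi(r_j)$.
\item[(3)] Conjugating $\phi(r_i)$ by $x_j$ corresponds to conjugating $r_i$ by the word $\phi^{-1}(x_j)$. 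This conjugation is realised by $|\phi^{-1}(x_j)|$ moves of type (3), interspersed with (0) moves.
\end{enumerate}
By induction on the number of AC moves occurring after the last Nielsen move, and then on the number of Nielsen moves, the whole $Q^{\ast}$-transformation can be rearranged. The result is $P \to Q$ by AC moves, followed by a single automorphism $\psi$, a composite of Nielsen moves, with $\psi(Q)$ freely equal to the standard presentation $\langle x_1,\dots,x_n \mid x_1,\dots,x_n\rangle$, up to relabelling.

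\textbf{Step 2 (relators form a basis).} Let $Q=\langle x_1,\dots,x_n \mid q_1,\dots,q_n\rangle$. The conclusion of Step 1 says that $\{\psi(q_1),\dots,\psi(q_n)\}$ equals $\{x_1^{\pm1},\dots,x_n^{\pm1}\}$, one from each pair, up to free equality; any signs can be fixed with (1) moves. Hence $q_i=\psi^{-1}(x_{\sigma(i)})^{\pm 1}$, so $\{q_1,\dots,q_n\}$ is a free basis of $F$.

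\textbf{Step 3 (Nielsen's theorem on the relators).} By Nielsen's theorem \cite{Nielsen}, any free basis of $F$ is carried to the standard basis $\{x_1,\dots,x_n\}$ by a finite sequence of elementary Nielsen transformations on tuples:
\begin{itemize}
\item $q_i \mapsto q_i^{-1}$;
\item $q_i \mapsto q_iq_j$ or $q_jq_i$ with $j\neq i$;
\item permutations of the tuple;
\item free reduction.
\end{itemize}
These are precisely AC moves (1), (2) and (0). Permutations are not needed, since relations are unordered. Therefore $Q$, and hence $P$, is related to the standard presentation by AC moves alone.

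\textbf{Main obstacle.} The only delicate point is Step 1. The issue is that $\phi$ applied letterwise produces unreduced words, and move (3) conjugates by a single letter. I will handle this by working consistently modulo (0) moves, and by observing that conjugation by an arbitrary word is a composite of (3) and (0) moves. One must also track the relabelling of generators in Nielsen moves, so that "standard presentation" is preserved up to renaming. This is harmless because the standard presentation is invariant under permuting and inverting generators, up to (1) moves.
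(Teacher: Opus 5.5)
Your proposal is correct. Its first half coincides with the paper's: Step~1 is Lemma~\ref{Lem:ChangeOrder}, used to push every Nielsen move to the end. Your handling of move (3) is more careful than the paper's ``a similar argument works'': after $\phi$, conjugation by $x_j$ pulls back to conjugation by the word $\phi^{-1}(x_j)$, hence to several (3) moves rather than one.

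The two routes differ in how the terminal automorphism is eliminated. The paper peels off the Nielsen moves one at a time. The last one carries some $P''$ to the standard presentation, and a single move (5) or (6) applied to the standard presentation has the same effect as an AC move of type (2) or (1). That AC move commutes back past the remaining $k-1$ Nielsen moves, and the paper inducts on $k$. You instead observe that the relators of $Q$ form a free basis of $F$. You then quote the tuple form of Nielsen's theorem, identifying elementary Nielsen transformations of the relator tuple with AC moves (1) and (2).

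Your version isolates a clean, reusable criterion: a balanced presentation whose relators form a basis of $F$ is AC-equivalent to the standard one. The paper's argument, by contrast, needs no outside input. You can in fact drop the appeal to Nielsen's theorem as well. The automorphism $\psi$ is already given as a composite of the Nielsen moves in the transformation. Replacing an automorphism $\theta$ by $\theta\circ\nu$, for $\nu$ elementary, changes the tuple $(\theta(x_1),\dots,\theta(x_n))$ by an elementary tuple transformation. So the required AC sequence can be read off directly, which is essentially what the paper's induction does.
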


We now embark upon the proof of this theorem. To do so, it is convenient to work with a slightly different notion of a group presentation than elsewhere in the paper. Previously, we have viewed the relations in a presentation $\langle x_1, \dots, x_n | r_1, \dots, r_n \rangle$ as elements of the free semi-group on the generators and their inverses. But we now instead view the relations as elements of the free group $\langle x_1, \dots, x_n \rangle$. This is a modest change. It has the advantage that the Andrews-Curtis move (0), which introduces or removes a generator and its inverse, is no longer required. It also has the advantage that the Nielsen moves (5) and (6) are realised by actual automorphisms of the free group $\langle x_1, \dots, x_n \rangle$ rather than some lift of such an automorphism to the free semi-group. However, it has the disadvantage that thickenability is not immediately well-defined, since it might be possible that one semi-group realisation of the relations is thickenable whereas another is not. However, this does not concern us, 
since we are now focused on Theorem \ref{Thm:QastToAC}, which does not refer to thickenability.

\begin{lemma}
\label{Lem:ChangeOrder}
Let $P$ be a finite group presentation, and let $P'$ be obtained from $P$ by a Nielsen transformation followed by an Andrews-Curtis move. Then $P'$ is also obtained from $P$ by an Andrews-Curtis move followed by a Nielsen transformation. Similarly, if $P'$ is obtained from $P$ by an Andrews-Curtis move followed by a Nielsen transformation, we may also interchange the order of those moves.
\end{lemma}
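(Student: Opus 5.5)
The plan is to use the new convention of this section: relations are elements of the free group $F=\langle x_1,\dots,x_n\rangle$, and a Nielsen transformation is an automorphism $\phi$ of $F$ applied simultaneously to all relations. The reordering is then a formal consequence of $\phi$ being a homomorphism, so the proof is a check over the types of Andrews-Curtis move. Move (0) disappears in this setting, leaving (1), (2) and (3). I would do the first direction, Nielsen then Andrews-Curtis. For the second direction I would apply the first to the reversed sequence, since Nielsen transformations form a group (so $\phi^{-1}$ is one) and every Andrews-Curtis move has an inverse Andrews-Curtis move.

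So let $P=\langle x_1,\dots,x_n\mid r_1,\dots,r_n\rangle$ and $\phi(P)=\langle x_1,\dots,x_n\mid \phi(r_1),\dots,\phi(r_n)\rangle$. Let $P'$ be obtained from $\phi(P)$ by one Andrews-Curtis move acting on $\phi(r_i)$. I aim to find a single move $\mu$ on $P$ and a Nielsen transformation $\psi$ with $P'=\psi(\mu(P))$.
\begin{itemize}
\item For move (1), $\phi(r_i)^{-1}=\phi(r_i^{-1})$. So $\mu$ inverts $r_i$ and $\psi=\phi$.
\item For move (2), $\phi(r_i)\phi(r_j)=\phi(r_ir_j)$, and likewise $\phi(r_j)\phi(r_i)=\phi(r_jr_i)$. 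So $\mu$ multiplies $r_i$ by $r_j$ on the same side and $\psi=\phi$.
\end{itemize}
In both cases the other relations are untouched on both sides, so the presentations agree exactly.

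Move (3) replaces $\phi(r_i)$ by $x_j^{\pm1}\phi(r_i)x_j^{\mp1}$. Put $g=\phi^{-1}(x_j)$. Then $x_j\phi(r_i)x_j^{-1}=\phi(g r_i g^{-1})$, so $\mu$ should be ``replace $r_i$ by $g r_ig^{-1}$'' with $\psi=\phi$. This is the step I expect to be the main obstacle. In general $g$ is a word, not a single generator, so $\mu$ is a single move only if conjugation of one relation by an arbitrary element of $F$ counts as one Andrews-Curtis move.

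I would first try to justify reading move (3) that way here, since relations now live in $F$ rather than the free semigroup. The lemma asserts exactly one move, so I would make explicit that in this section move (3) means $r_i\mapsto g r_ig^{-1}$ for $g\in F$. I would also note that this changes neither the equivalence relation generated nor the statement of Theorem \ref{Thm:QastToAC}, because such a conjugation is a composite of generator conjugations.

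If the argument must use generator conjugations only, I would try to change $\psi$ rather than $\mu$: take $\mu$ to be conjugation of $r_i$ by some generator $x_m$, and look for $\psi$ fixing $r_k$ for $k\ne i$ (up to $\phi$) and carrying $x_mr_ix_m^{-1}$ to $gr_ig^{-1}$. I do not expect this to succeed for arbitrary $\phi$. So the plan commits to the free-group reading of move (3), under which all three cases, and then the reverse-order statement, follow formally.
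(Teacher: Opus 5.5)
Your argument is the paper's: write the Nielsen transformation as an automorphism $\phi$ of the free group, and pull each Andrews--Curtis move back through $\phi$ using that $\phi$ is a homomorphism.

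Your worry about move (3) is well founded. The paper's ``a similar argument works for moves (2) and (3)'' glosses over the fact that conjugation by $x_j$ pulls back to conjugation by the word $\phi^{-1}(x_j)$, and in the other order pushes forward to conjugation by $\phi(x_j)$. Either way this is a sequence of type (3) moves, not a single one. Your reading of move (3) as conjugation by an arbitrary $g\in F$, or equivalently replacing ``an Andrews--Curtis move'' by ``a sequence of Andrews--Curtis moves'', is exactly what is needed. It is also all that the proof of Theorem~\ref{Thm:QastToAC} uses.

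One small correction concerns the reverse order. Your inversion trick assumes that every Andrews--Curtis move has an inverse Andrews--Curtis move. With the paper's list this fails for type (2): the inverse of $r_i\mapsto r_ir_j$ is $r_i\mapsto r_ir_j^{-1}$, which takes the three moves (1), (2), (1). It is simpler to check the reverse order directly, as the paper does, using $\phi(r_i^{-1})=\phi(r_i)^{-1}$, $\phi(r_ir_j)=\phi(r_i)\phi(r_j)$ and $\phi(gr_ig^{-1})=\phi(g)\phi(r_i)\phi(g)^{-1}$. Under your reading of (3), this gives a single move in each case.
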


\begin{proof}
Suppose that we have a Nielsen transformation followed by an Andrews-Curtis move. As explained in Section \ref{Sec:Nielsen}, the Nielsen transformation is achieved by an automorphism $\phi$ of the free group $\langle x_1, \dots, x_n \rangle$. This takes the relations $\{ r_1, \dots, r_n \}$ to $\{ \phi(r_1), \dots, \phi(r_n) \}$. Say that this is followed by Andrews-Curtis move (1), exchanging $\phi(r_i)$ for $(\phi(r_i))^{-1}$. Then we may instead replace $r_i$ by $r_i^{-1}$, and then apply $\phi$. A similar argument works for moves (2) and (3). The proof when the moves are given to us in the opposite order is essentially the same.
\end{proof}

\begin{proof}[Proof of Theorem \ref{Thm:QastToAC}]
We are assuming that the presentation $P$ is related to a standard presentation of the trivial group by a $Q^{\ast}$-transformation, in other words by a sequence of Andrews-Curtis moves and Nielsen moves.
By Lemma \ref{Lem:ChangeOrder}, in this sequence of moves, we may perform all the Andrews-Curtis moves first, then all the Nielsen moves. The final presentation is $\langle x_1, \dots, x_n | x_1, \dots, x_n \rangle$. Hence, before the Nielsen moves, the presentation $P'$ is $\langle x_1, \dots, x_n | \phi(x_1), \dots, \phi(x_n) \rangle$ for some automorphism $\phi$ of the free group. This automorphism is expressed as a composition of Nielsen transformations. Say that there are $k$ of these. Consider the last Nielsen transformation in the sequence. This converts a presentation $P''$ to the standard one. Hence, $P''$ is obtained from a standard presentation using a Nielsen move. But, when applied to a standard presentation, this is the same as applying an Andrews-Curtis move. Thus, we have a sequence of $k-1$ Nielsen transformations taking $P'$ to $P''$, followed by an Andrews-Curtis move. Now apply Lemma \ref{Lem:ChangeOrder} to replace this by an Andrews-Curtis move followed by $k-1$ Nielsen moves. Thus, we have reduced the number of Nielsen moves, and continuing in this way, we eventually remove all Nielsen moves.
\end{proof}

This completes the proof of Theorem \ref{Thm:AC}.

We note that instead of using Theorem \ref{Thm:QastToAC}, we could have completed the proof of Theorem \ref{Thm:AC} by applying the following theorem of Kaneto \cite{Kaneto}, which is of independent interest.

\begin{theorem}
\label{Thm:Kaneto}
Let $\mathcal{D}_1$ and $\mathcal{D}_2$ be a pair of meridional collections of discs for the standard Heegaard splitting of the 3-sphere. Then there is a sequence of disc slides taking $\mathcal{D}_2$ to a new collection $\mathcal{D}_2'$, so that when $\mathcal{D}_2'$ is suitably oriented, the group presentation arising from $\mathcal{D}_1$ and $\mathcal{D}_2'$ is related to a standard presentation of the trivial group using only cancellation moves of type $(0)$.
\end{theorem}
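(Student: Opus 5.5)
The plan is to prove Kaneto's theorem by induction on the genus $g$ of the standard splitting $(H_1,H_2)$, using the structure of the standard splitting together with Waldhausen's Theorem~\ref{Thm:Waldhausen} and Lemma~\ref{Lem:DiscSlides}. Fix the standard pair $\{E_1,\dots,E_g\}\subset H_1$, $\{E'_1,\dots,E'_g\}\subset H_2$. The key observation is the following. Suppose $\mathcal{D}_2'$ can be chosen so that each $\partial D'_j$ meets $\partial\mathcal{D}_1$ in a way whose algebraic description, read in the generators dual to $\mathcal{D}_1$, freely reduces to a single letter $x_{\sigma(j)}^{\pm1}$, with $\sigma$ a permutation. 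Then the presentation is standard up to type $(0)$ moves, relabelling, and orientation choices. So the goal is to find $\mathcal{D}'_2$, obtained from $\mathcal{D}_2$ by slides in $H_2$ alone, such that each $D'_j$ is dual (geometrically once) to some disc of $\mathcal{D}_1$ \emph{up to bigons}. Free reduction of the relation word then corresponds to removing those bigons, which is exactly what type $(0)$ moves record.

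\textbf{Step 1: reduce to the case where $\mathcal{D}_1$ is standard.} By Lemma~\ref{Lem:DiscSlides}, $\mathcal{D}_1$ differs from $\{E_i\}$ by slides and isotopies in $H_1$. I would like to realise these by an ambient homeomorphism of $S^3$ preserving the splitting. Slides in $H_1$ are induced by handle-slide homeomorphisms of $H_1$, and for the standard splitting of $S^3$ such homeomorphisms of $H_1$ extend over $H_2$. This is a known fact, namely that the Goeritz group surjects onto the relevant handlebody group in the genus-$g$ standard splitting. If this is unavailable, I would instead use Waldhausen to produce a homeomorphism of $S^3$ carrying $(H_1,H_2)$ to itself with $\mathcal{D}_1\mapsto\{E_i\}$. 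This works because the pair $(H_1,\mathcal{D}_1)$ determines a splitting equivalent to the standard one, and the uniqueness argument there can be made to respect a chosen meridian system of $H_1$. Applying this homeomorphism carries $\mathcal{D}_2$ to another meridian system of $H_2$, so we may assume $\mathcal{D}_1=\{E_i\}$.

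\textbf{Step 2: with $\mathcal{D}_1$ standard, slide $\mathcal{D}_2$ to $\{E'_j\}$.} This is Lemma~\ref{Lem:DiscSlides} applied in $H_2$. Slides and isotopies of $\mathcal{D}_2$ give $\mathcal{D}_2'$ isotopic in $H_2$ to $\{E'_j\}$. An isotopy of $\partial\mathcal{D}_2'$ on $\partial H_2$ changes the relation words only by insertion and deletion of cancelling pairs $x_ix_i^{-1}$, namely bigons with $\partial E_i$, together with cyclic shifts. This is where the restricted form of the conclusion matters. I would take $\mathcal{D}_2'$ to be the image of the slides performed \emph{before} the final isotopy, so that its words are related to those of $\{E'_j\}$, that is to $x_j$, by type $(0)$ moves alone. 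The cyclic shifts are absorbed by choosing basepoints, and orientations are handled by the ``suitably oriented'' clause.

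\textbf{Main obstacle.} The hard part is Step 1: justifying that slides of $\mathcal{D}_1$ can be traded for slides of $\mathcal{D}_2$. Its honest content is that every automorphism of $H_1$ needed to standardise $\mathcal{D}_1$ extends to $S^3$, and this is genuinely where the geometry of $S^3$ enters. My first attempt would be an induction on $g$. Using Waldhausen, I would find a destabilising pair in which one disc is a member of $\mathcal{D}_1$ after slides in $H_2$ only. I would then cut along it and recurse in genus $g-1$, where the inductive hypothesis supplies the slides of the remaining $\mathcal{D}_2$-discs.
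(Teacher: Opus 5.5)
The paper does not prove this statement; it quotes it from Kaneto. However, the example the paper gives immediately afterwards shows that your Step~1 is false, and the whole strategy falls with it. Suppose Step~1 worked. Then you would have a homeomorphism $h$ of $S^3$ preserving $H_1$ and $H_2$ with $h(\mathcal{D}_1)=\{E_i\}$, and $h^{-1}(\{E'_j\})$ would be a meridian system of $H_2$ forming a standard pair with $\mathcal{D}_1$. The Claim in the paper's example gives a genus-two splitting of $S^3$ (standard, by Waldhausen) and a meridional collection $\{D_1,D_2\}$ of $H_1$ that is not part of any standard pair, because $H_1\setminus N(D_2)$ is a knotted solid torus. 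Two consequences follow. First, the Goeritz group does not act transitively on meridian systems of $H_1$. The ``known fact'' that it surjects onto the handlebody group already fails in genus one: a twist of $H_1$ along its meridian disc sends the meridian of $H_2$ to a curve that bounds no disc in $H_2$. Second, Waldhausen's uniqueness cannot be made to respect a prescribed $\mathcal{D}_1$. Your fallback induction fails on the same example. A disc of $H_2$ meeting $D_2$ once and missing $D_1$ would make $H_1\setminus N(D_2)$ an unknotted solid torus, since that solid torus is isotopic to $H_1$ with a $2$-handle attached along the disc. A disc meeting $D_1$ once and missing $D_2$ can be traded for one of the first kind, using the genus-one splitting it produces and slides over it. So no disc of $\mathcal{D}_1$ lies in a destabilising pair.

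The underlying error is your ``key observation''. A cancelling pair $x_ix_i^{-1}$ in a relation word is not the same thing as a bigon. It appears whenever an arc of $\partial D'_j$ leaves and re-enters the same side of $D_i$, and such an arc can be essential in the $2g$-holed sphere obtained by cutting $\partial H_1$ along $\partial\mathcal{D}_1$. So type $(0)$ moves record free homotopy in $H_1$, not isotopy on $\partial H_1$. In the paper's example, the words $x_1x_2^{-1}x_2^2x_2^{-1}$ and $x_2x_2^{-1}x_2x_2^{-1}x_2$ freely reduce to $x_1$ and $x_2$, yet the curves cannot be isotoped to be geometrically dual to $\{D_1,D_2\}$. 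Your target, geometric duality up to isotopy, would show that every meridian system of $H_1$ extends to a standard pair, which is false. Kaneto's conclusion is deliberately the weaker, algebraic one.

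What is missing is an argument at that algebraic level. For a standard pair $\{E_i\},\{E'_j\}$, the curves $\partial E'_j$ read, in the basis dual to $\mathcal{D}_1$, as conjugates of the elements of a free basis of $\pi_1(H_1)$ (the $\phi(x_j)$ in the proof of Theorem~\ref{Thm:QastToAC}). One would then have to show that a Nielsen reduction of this basis to $x_1,\dots,x_g$ can be realised by disc slides in $H_2$. The hard part is controlling the conjugating elements that the slide arcs introduce. That control is the real content of the theorem, and your proposal does not supply it.
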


Thus, disc slides to the discs in $\mathcal{D}_1$ are not actually required, and it was these that necessitated the use of the Nielsen moves (5) and (6). Hence, Guo's proof combined with Kaneto's theorem would complete the proof of Theorem \ref{Thm:AC}. However, we chose to present the proof as above, because of the very general nature of Theorem \ref{Thm:QastToAC}.

\subsection{An example.} We present an illustrative example.
Let $H_1$ be the handlebody embedded in the 3-sphere shown in the left of Figure \ref{Fig:HS3Sphere}. This is ambient isotopic to the handlebody shown on the right in Figure \ref{Fig:HS3Sphere}, and the closure of the complement of this is also a handlebody. Hence, the closure of the complement of $H_1$ is a handlebody $H_2$. There are two meridian discs for $H_1$, denoted $D_1$ and $D_2$ in Figure \ref{Fig:HS3Sphere}. The meridian discs $D_1'$ and $D_2'$ shown in the right of Figure \ref{Fig:HS3Sphere} pull back to meridian discs $D_1''$ and $D_2''$ for $H_2$.

\begin{figure}[h]
\centering
\includegraphics[width=0.95\textwidth]{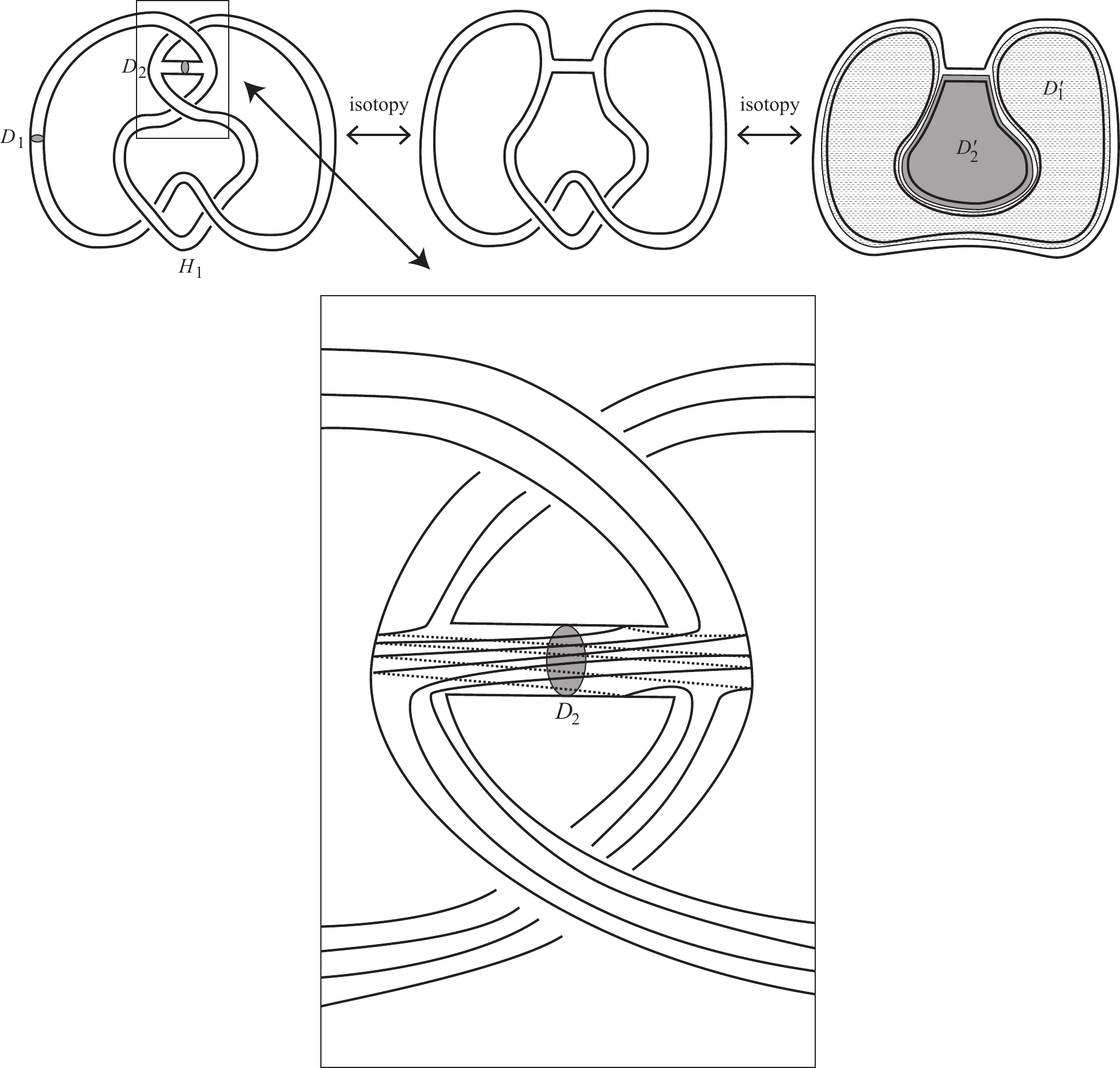}
\caption{Left: a Heegaard splitting of the 3-sphere, Middle and right: isotopic splittings, Right: Meridian discs $D_1'$ and $D'_2$, Bottom: the boundaries of the corresponding discs $D''_1$ and $D''_2$ in the original splitting}
\label{Fig:HS3Sphere}
\end{figure}

Although this is by, Waldhausen's theorem, equivalent to the standard Heegaard splitting for the 3-sphere, we have the following result.

\begin{claim}
The meridional collection $\{ D_1,  D_2 \}$ is not part of standard pair of meridional collections for the Heegaard splitting.
\end{claim}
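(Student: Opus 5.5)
Suppose, for contradiction, that $\{D_1, D_2\}$ were part of a standard pair, so that there are meridian discs $E_1, E_2$ for $H_2$ with $D_i \cap E_j$ a single transverse point when $i=j$ and empty otherwise. I would look for an invariant that distinguishes this situation from the given one. The most natural such invariant is the normal closure in $\pi_1(H_2)$ of the curves $\partial D_1, \partial D_2$, or more concretely the words that $\partial D_1$ and $\partial D_2$ spell in the free group $\pi_1(H_2) = F(a,b)$. These words are well defined up to the choice of meridional collection for $H_2$, i.e.\ up to automorphisms of $F(a,b)$ (disc slides and orientation changes, as in the proof of Theorem~\ref{Thm:Qast}), together with conjugation and inversion of each word separately.

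If $\{D_1, D_2\}$ belonged to a standard pair, then reading $\partial D_1, \partial D_2$ against the dual discs $E_1, E_2$ would give the words $a, b$. Hence $\partial D_1$ and $\partial D_2$ would represent, up to separate conjugation, a basis of $F(a,b)$. In particular, each of $\partial D_1$ and $\partial D_2$ would individually be primitive in $\pi_1(H_2)$, and the pair would be simultaneously primitive.

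The plan is therefore as follows. First, use the meridian discs $D_1'', D_2''$ of $H_2$ pictured in Figure~\ref{Fig:HS3Sphere} to read off explicit words $w_1, w_2 \in F(a,b)$, namely the words spelled by $\partial D_1$ and $\partial D_2$ as they cross $D_1''$ and $D_2''$. Second, show that the pair $(w_1, w_2)$ is not, up to separate conjugation and inversion, a basis of $F(a,b)$. I would try Whitehead's algorithm for this: reduce $\{w_1, w_2\}$ to minimal total cyclic length by Whitehead automorphisms. If the minimal form is not $\{a^{\pm1}, b^{\pm1}\}$ up to conjugacy, then the pair is not conjugate to a basis. A quicker check is to test whether one of the $w_i$ fails the Whitehead graph cut-vertex criterion for primitivity, or to use Nielsen's characterization in rank two: $\{u,v\}$ generates $F_2$ only if the commutator $[u,v]$ is conjugate to $[a,b]^{\pm1}$. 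One could test the last condition directly by computing a cyclically reduced form of $[w_1, w_2]$ for suitable conjugates.

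The main obstacle is the subtlety of "up to separate conjugation." The curves $\partial D_1$ and $\partial D_2$ determine conjugacy classes individually, so the commutator test must range over all conjugates, and the correct criterion is that some conjugates $g w_1 g^{-1}$ and $h w_2 h^{-1}$ form a basis. I would handle this with the Whitehead-graph approach applied to the set $\{w_1, w_2\}$ of cyclic words: a set of cyclic words is conjugate to a basis-type collection only if its Whitehead graph is disconnected or has a cut vertex, and we then iterate the reduction. The other risk is simply misreading the words from the figure, so I would double-check them against the requirement that $\pi_1(M)$ is trivial.
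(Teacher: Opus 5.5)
Your argument is correct in outline but takes a different route from the paper's. The paper argues topologically: if $\{D_1,D_2\}$ and $\{\tilde D_1,\tilde D_2\}$ formed a standard pair, then $(D_2,\tilde D_2)$ would be a destabilising pair. So $\tilde H_1=\overline{H_1\setminus N(D_2)}$ would be one side of a genus-one Heegaard splitting of $S^3$, whereas the figure shows it is a knotted solid torus. You instead use the algebraic obstruction that each $\partial D_i$ must be primitive in $\pi_1(H_2)$.

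The two arguments are the same fact seen from two sides. We have $\pi_1(H_2)/\langle\langle w_2\rangle\rangle\cong\pi_1(H_2\cup N(D_2))=\pi_1(\overline{S^3\setminus\tilde H_1})$, which is a nontrivial knot group and hence not $\mathbb{Z}$ by Dehn's lemma. So $w_2$ is guaranteed to be non-primitive, and your computation must succeed.

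Some practical points:
\begin{enumerate}
\item The test must be applied to $w_2$, not $w_1$. In the presentation printed in the paper, $x_1$ occurs once in the first relation and not at all in the second, so $\partial D_1$ meets $\partial D_1''$ once and misses $\partial D_2''$. Hence $w_1$ is a single letter.
\item Primitivity is a property of a single conjugacy class, so the ``separate conjugation'' issue you flag as the main obstacle disappears. You never need the pair, the commutator test, or the Whitehead graph of $\{w_1,w_2\}$; you only need $w_2$ alone.
\item The same presentation shows that $w_2$ is an unreduced cyclic word of length $9$. Its exponent sum is $0$ in the generator dual to $D_1''$ and $\pm1$ in the generator dual to $D_2''$.
\item Primitive elements of $F_2$ are determined up to conjugacy by their image in $\mathbb{Z}^2$, since Nielsen showed the kernel of $\mathrm{Aut}(F_2)\to GL_2(\mathbb{Z})$ is $\mathrm{Inn}(F_2)$. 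So it suffices to check that $w_2$ does not cyclically reduce to a single letter. The cyclic order of its letters must still be read from the bottom of the figure.
\end{enumerate}

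As for what each approach buys: the paper's argument is two lines but rests on recognising from the picture that the core of $\tilde H_1$ is a nontrivial knot. Yours yields a finite, mechanically checkable certificate from the Heegaard diagram. The cost is an explicit word extraction and computation, which your proposal leaves undone.
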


Suppose that there were meridian discs $\tilde D_1$ and $\tilde D_2$ for $H_2$ such that $\{ D_1, D_2 \}$ and $\{ \tilde D_1, \tilde D_2 \}$ form the standard pair. Then we could destabilise the Heegaard splitting, decreasing its genus by $1$, by setting $\tilde H_1$ to be the closure of $H_1 \setminus N(D_2)$. However, the closure of the complement of $N(D_2)$ is not a handlebody in a Heegaard splitting of the 3-sphere. This is because $\tilde H_1$ is a knotted solid torus. This proves the claim.

Thus, it is not possible, by using only disc slides in $H_2$, to turn the initial meridional systems into the standard pair. Nevertheless, the initial meridional systems $\{ D_1, D_2 \}$ and $\{ D_1'', D_2'' \}$ give the presentation $\langle x_1 , x_2 | x_1 x_2^{-1} x_2^2 x_2^{-1}, x_2 x_2^{-1} x_2 x_2^{-1} x_2 \rangle$. Four type (0) cancellation moves take this to a standard presentation. This is consistent with Kaneto's theorem. 

\subsection{Bounding the number of Andrews-Curtis moves.} 
Unfortunately, we do not obtain an explicit upper bound on the number of Andrews-Curtis moves in Theorem \ref{Thm:AC}. 
This is because there is no known explicit bound on the number of handle slides required to relate meridional discs systems for a Heegaard splitting of the 3-sphere to the standard meridional disc systems. Indeed, this is related to the classical problem, going back to Whitehead \cite{Whitehead}, of how to convert a Heegaard diagram for the 3-sphere into a standard one. A good deal of work by many researchers (for example \cite{Volodin, Viro, Ochiai, Morikawa}) has shown why this problem is challenging. At present, all that we can say is that there is a computable upper bound on the number of Andrews-Curtis moves, as function of the length of the presentation. Indeed, a naive computation for this quantity runs as follows. Consider all balanced group presentations with length $\ell$ (and with no relations forming empty words). For each such presentation $P$, determine whether it is thickenable, using the algorithm of Neuwirth \cite{Neuwirth}. Discard all that are not thickenable. This algorithm also provides a thickening $N(K)$. The presentation $P$ was a presentation of the trivial group if and only if $N(K)$ is a 3-ball, and there is an algorithm to determine this \cite {Rubinstein, Thompson}. Hence, we can list all balanced thickenable presentations of the trivial group with length $\ell$. We can then start to apply Andrews-Curtis moves to them. By Theorem \ref{Thm:AC}, we will eventually convert them all to standard presentations. The number of moves that we need is then the required computable function.

It would be very interesting to develop the theory of Heegaard splittings sufficiently in order to obtain a more explicit upper bound on the number of Andrews-Curtis moves for thickenable balanced presentations of the trivial group.

\bibliography{StableAC-biblio}
\bibliographystyle{plain}

\end{document}